\newtheorem{thm}{Theorem}[section]
\newtheorem{lem}[thm]{Lemma}
\newtheorem{con}[thm]{Construction}
\newtheorem{proposition}[thm]{Proposition}
\newtheorem{cor}[thm]{Corollary}
\theoremstyle{definition}
\newtheorem{dfn}[thm]{Definition}
\newdefinition{rmk}[thm]{Remark}
\newtheorem{exmp}[thm]{Example}
\theoremstyle{remark}
\numberwithin{equation}{section}
\journal{Journal of \LaTeX\ Templates}
\def\ps@pprintTitle{%
   \let\@oddhead\@empty
   \let\@evenhead\@empty
   \def\@oddfoot{\reset@font\hfil\thepage\hfil}
   \let\@evenfoot\@oddfoot
}
\begin{document}

\begin{frontmatter}

\title{The application of representation theory in directed strongly regular graphs}
\author[]{Yiqin He\corref{cor1}}
\ead{2014750113@smail.\;xtu.\;edu.\;cn}
\author[]{Bicheng Zhang\corref{cor1}}
\ead{zhangbicheng@xtu.\;edu.\;cn}

\address{School of Mathematics and Computational Science, Xiangtan Univerisity, Xiangtan, Hunan, 411105, PR China}
\cortext[cor1]{Corresponding author}



%

\begin{abstract}
The concept of directed strongly regular graphs (DSRG) was introduced by Duval in 1988 \cite{A}.\;In the present paper,\;we use representation theory of finite groups in order to investigate the directed strongly regular Cayley graphs.\;We first show that a Cayley graph $\mathcal{C}(G,S)$ is not a directed strongly regular graph if $S$ is a union of some conjugate classes of $G$.\;This generalizes an earlier result of Leif K.\;J{\o}rgensen \cite{J1} on abelian groups.\;Secondly,\;by using induced representations,\;we have a look at the Cayley graph $\mathcal{C}(N\rtimes_\theta H, N_1\times H_1)$ with $N_1\subseteq N$ and $H_1\subseteq H$,\;determining its characteristic polynomial and its minimal polynomial.\;Based on this result,\;we generalize the semidirect product method of Art M. Duval and Dmitri Iourinski in \cite{D} and obtain a larger family of directed strongly regular graphs.\;Finally,\;we construct some directed strongly regular Cayley graphs on dihedral groups,\;which partially generalize the earlier results of Mikhail Klin,\;Akihiro Munemasa,\;Mikhail Muzychuk,\;and Paul Hermann Zieschang in \cite{K1}.\;By using character theory,\;we also give the characterization of directed strongly regular Cayley graphs $\mathcal{C}(D_n,X\cup Xa)$ with $X\cap X^{(-1)}=\emptyset$.\;

\end{abstract}
\begin{keyword}
Directed strongly regular graph;\;Representation theory;\;Induced representation;\;Cayley graph
\end{keyword}
\end{frontmatter}
\section{Introduction}

A \emph{directed strongly regular graph} (DSRG,\;since it will appear many times, it will be abbreviated as "DSRG" in the following) with parameters $( n, k, \mu ,\lambda , t)$ is a $k$-regular directed graph on $n$ vertices such that every vertex is on $t$ 2-cycles,\;and the number of paths of length two from a vertex $x$ to a vertex $y$ is $\lambda$ if there is an edge directed from $x$ to $y$ and it is $\mu$ otherwise.\;A DSRG with $t=k$ is an (undirected) strongly regular graph (SRG).\;Duval showed that a DSRG with $t=0$ is a doubly regular tournament.\;Therefore,\;it is usually assumed that $0<t<k$.\;

Another definition of a directed strongly regular graph can be given in terms of adjacency matrices.\;Let $D$ be a directed graph
with $n$ vertices.\;Let $A=\mathbf{A}(D)$ denote the adjacency matrix of $D$;\;$I = I_n$ denote the $n\times n$ identity matrix;\;$J = J_n$ denote the all-ones matrix.\;Then $D$ is a DSRG with parameters $(n,k,\mu,\lambda,t)$ if
and only if (i) $JA = AJ = kJ$ and (ii) $A^2=tI+\lambda A+\mu(J-I-A)$.\;Duval \cite{A} developed necessary conditions on the parameters of $(n,k,\mu,\lambda,t)$-DSRG and calculated the spectrum of a DSRG.\;
\begin{proposition}(\cite{A})\label{p-1}
A DSRG with parameters $( n,k,\mu ,\lambda ,t)$ with $0<t<k$ satisfy
\begin{equation}\label{1.1}
k(k+(\mu-\lambda ))=t+\left(n-1\right)\mu,
\end{equation}
\begin{equation*}
{{d}^{2}}={{\left( \mu -\lambda  \right)}^{2}}+4\left(t-\mu\right)\text{,}d|2k-(\lambda-\mu)(n-1),
\end{equation*}
\begin{equation*}
\frac{2k-(\lambda-\mu)(n-1)}{d}\equiv n-1(\mathrm{mod}\;2)\text{,}\left|\frac{2k-(\lambda-\mu)(n-1)}{d}\right|\leqslant n-1,
\end{equation*}
where $d$ is a positive integer, and
\begin{equation}\label{1.2}
0\leqslant \lambda <t,0<\mu \leqslant t,-2\left( k-t-1 \right)\leqslant \mu -\lambda \leqslant 2\left( k-t \right).
\end{equation}
\end{proposition}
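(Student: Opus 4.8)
The plan is to work entirely from the two matrix conditions (i) $JA=AJ=kJ$ and (ii) $A^2=tI+\lambda A+\mu(J-I-A)$, which by the stated equivalence characterize a DSRG, and to read off the parameter restrictions from the spectrum of $A$. First I would derive the counting identity \eqref{1.1}. Right-multiplying (ii) by $J$ and using $AJ=kJ$, $A^2J=k^2J$, and $J^2=nJ$ turns (ii) into a scalar identity among multiples of $J$; comparing the coefficient of $J$ gives $k^2=t+\lambda k+\mu(n-1-k)$, which rearranges to $k(k+\mu-\lambda)=t+(n-1)\mu$.

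Next I would compute the spectrum. The all-ones vector $\mathbf 1$ is an eigenvector with eigenvalue $k$ by (i). On the orthogonal complement $\mathbf 1^{\perp}$ one has $Jv=0$, so (ii) collapses to $A^2v=(\lambda-\mu)Av+(t-\mu)v$; hence every nonprincipal eigenvalue $\theta$ satisfies $\theta^2-(\lambda-\mu)\theta-(t-\mu)=0$, giving the two roots $\theta_{1,2}=\tfrac12\bigl((\lambda-\mu)\pm d\bigr)$ with $d^2=(\mu-\lambda)^2+4(t-\mu)$. This produces the formula for $d$. To obtain the divisibility and parity statements I would use that $A$ is an integer matrix with zero diagonal, so $\operatorname{tr}(A)=0$. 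Writing $m_1,m_2$ for the multiplicities of the two roots and combining $m_1+m_2=n-1$ with $k+m_1\theta_1+m_2\theta_2=0$ yields, after simplification, $m_{1,2}=\tfrac12\bigl[(n-1)\mp\tfrac{2k-(\lambda-\mu)(n-1)}{d}\bigr]$.

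The integrality of $d$ is the first delicate point: if $d$ were irrational, the two roots would be Galois conjugates and hence carry equal multiplicity, forcing $m_1=m_2=\tfrac{n-1}{2}$; feeding this back into the trace relation gives $2k=\pm(\lambda-\mu)(n-1)$, which together with \eqref{1.1} collapses to $t=0$ or $t=k$, both excluded by $0<t<k$. Thus $d$ is a positive integer. Then the requirement that $m_1,m_2$ be nonnegative integers immediately gives $d\mid 2k-(\lambda-\mu)(n-1)$, the congruence $\tfrac{2k-(\lambda-\mu)(n-1)}{d}\equiv n-1\pmod 2$ (from $m_1,m_2\in\mathbb Z$ and $m_1+m_2=n-1$), and the bound $\bigl|\tfrac{2k-(\lambda-\mu)(n-1)}{d}\bigr|\le n-1$ (from $m_1,m_2\ge 0$).

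It remains to establish the parameter inequalities \eqref{1.2}. The nonnegativity $\lambda\ge 0$, $\mu\ge 0$ and the identity $t=(A^2)_{xx}$ are immediate from reading (ii) entrywise, since each entry of $A^2$ counts directed paths of length two. The sharper bounds are where I expect the main obstacle: to get $\lambda<t$, $0<\mu\le t$ and $-2(k-t-1)\le\mu-\lambda\le 2(k-t)$ one must argue locally around a vertex and its $2$-cycles rather than purely spectrally. My plan is to use the sign information on the roots ($\theta_2\le 0$ from $\operatorname{tr}(A)=0$, and $\theta_1\ge 0$, which I would extract from the nonnegativity of suitable path counts) together with double counting of length-two paths through the out-neighborhood of a fixed vertex, keeping track of how many of the $k$ out-neighbors lie on $2$-cycles. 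Converting these local counts into the precise constants $2(k-t)$ and $2(k-t-1)$ is the step requiring the most careful bookkeeping, and I would handle it last.
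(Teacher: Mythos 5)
First, a point of reference: the paper does not prove this proposition at all --- it is quoted from Duval \cite{A} --- so your attempt can only be measured against Duval's classical argument. Your skeleton follows that argument: deriving \eqref{1.1} by multiplying the matrix identity by $J$, restricting to $\mathbf{1}^{\perp}$ (which is $A$-invariant because $JA=kJ$) to get the quadratic $\theta^{2}-(\lambda-\mu)\theta-(t-\mu)=0$, and then reading the divisibility, parity and bound off the integrality and nonnegativity of the two multiplicities. Those steps are sound in outline, as is the idea that irrational roots would force equal multiplicities and a contradiction with $0<t<k$ (though the conclusion there is not literally ``$t=0$ or $t=k$'': equal multiplicities give $2k=(\mu-\lambda)(n-1)$, hence $\mu-\lambda\in\{1,2\}$; the case $\mu-\lambda=2$ forces $A=J-I$ and $t=k$, while $\mu-\lambda=1$ forces $t=k(k+1-2\mu)$, which has no integer solution with $0<t<k$).

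There are, however, two genuine gaps. The larger one: the inequalities \eqref{1.2} are never proved; you explicitly defer them, and they are roughly half of the statement. They are not mere bookkeeping. For example, $\mu>0$ follows from \eqref{1.1} together with $\lambda\le k-1$ (if $\mu=0$ then $k(k-\lambda)=t<k$, impossible); $\mu\le t$ and $\lambda<t$ come from sign information on the roots, namely $\sigma\le-1$ (from $m_\rho\ge0$ and integrality) and $\rho\ge0$ (a separate trace argument), which give $\mu-t=\rho\sigma\le0$ and $t-\lambda=1-(1+\rho)(1+\sigma)\ge1$; and the two-sided bound on $\mu-\lambda$ requires a neighborhood-decomposition count at an edge $x\to y$ lying on no $2$-cycle (such an edge exists since $t<k$), where the asymmetry between the constants $2(k-t)$ and $2(k-t-1)$ is exactly the delicate point. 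None of this appears in your proposal. The second gap is an algebra error in the step you present as complete: from $m_1+m_2=n-1$ and $k+m_1\theta_1+m_2\theta_2=0$ one obtains $m_{1,2}=\frac{1}{2}\left[(n-1)\mp\frac{2k+(\lambda-\mu)(n-1)}{d}\right]$, with a \emph{plus} sign, not the quantity $2k-(\lambda-\mu)(n-1)$ you claim. The two are not interchangeable: for the DSRG with parameters $(6,2,1,0,1)$ one has $2k-(\lambda-\mu)(n-1)=9$, violating the stated bound $\le n-1=5$, whereas $2k+(\lambda-\mu)(n-1)=-1$ satisfies all three conditions. (This shows the proposition as transcribed in the paper carries a sign typo; the true conditions concern $2k+(\lambda-\mu)(n-1)=2k-(\mu-\lambda)(n-1)$.) So your ``after simplification'' assertion does not follow from your own equations; the honest computation would have forced you either to fix the algebra or to flag the typo in the statement.
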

\begin{proposition}(\cite{A})\label{p-2}
A DSRG with parameters $( n, k, \mu ,\lambda , t)$ has three distinct integer eigenvalues
\begin{equation}\label{1.3}
k>\rho =\frac{1}{2}\left( -\left( \mu -\lambda  \right)+d \right)>\sigma =\frac{1}{2}\left( -\left( \mu -\lambda  \right)-d \right).\;
\end{equation}
The multiplicities are
\begin{equation}\label{1.4}
1,\;m_\rho=-\frac{k+\sigma \left( n-1 \right)}{\rho -\sigma },\;m_\sigma=\frac{k+\rho \left( n-1 \right)}{\rho -\sigma }
\end{equation}
respectively.\;
\end{proposition}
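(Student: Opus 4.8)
The plan is to read everything off the matrix identity (ii), namely $A^2 = tI + \lambda A + \mu(J - I - A)$, together with $JA = AJ = kJ$. First I would observe that the row-sum condition $AJ = kJ$ gives $A\mathbf{1} = k\mathbf{1}$ and the column-sum condition $JA = kJ$ gives $\mathbf{1}^{\mathsf T}A = k\mathbf{1}^{\mathsf T}$, so $k$ is an eigenvalue with eigenvector $\mathbf{1}$. Set $W = \{v \in \mathbb{C}^n : \mathbf{1}^{\mathsf T}v = 0\}$. Because $\mathbf{1}$ is a left eigenvector, $W$ is $A$-invariant, and since $\langle \mathbf{1}\rangle \cap W = 0$ we obtain an $A$-invariant direct-sum decomposition $\mathbb{C}^n = \langle\mathbf{1}\rangle \oplus W$.

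On $W$ we have $Jv = \mathbf{1}(\mathbf{1}^{\mathsf T}v) = 0$, so the identity (ii) collapses to $A^2 = (t-\mu)I + (\lambda - \mu)A$ on $W$; equivalently $A|_W$ is annihilated by the monic polynomial $x^2 - (\lambda - \mu)x - (t - \mu)$. Its two roots are exactly the $\rho,\sigma$ of \eqref{1.3}, since $d^2 = (\mu-\lambda)^2 + 4(t-\mu)$. As $0 < t < k$ forces $d > 0$ (here I would invoke the fact, recorded in Proposition \ref{p-1}, that $d$ is a positive integer), the two roots are distinct, the annihilating polynomial is squarefree, and hence $A|_W$ is diagonalizable with every eigenvalue in $\{\rho,\sigma\}$. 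Combined with the one-dimensional block $\langle\mathbf{1}\rangle$, this shows $A$ is diagonalizable over $\mathbb{C}$ with spectrum contained in $\{k,\rho,\sigma\}$ --- the crucial point being that, although $A$ need not be symmetric for a directed graph, the squarefree minimal polynomial on $W$ rescues diagonalizability without the spectral theorem.

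To pin down the ordering and integrality I would argue as follows. Since $A \geq 0$ has constant row sum $k$, its spectral radius equals $k$, so $\rho \leq k$; and $k$ cannot solve $x^2 - (\lambda-\mu)x - (t-\mu) = 0$, for that would give $k(k + \mu - \lambda) = t - \mu$, contradicting \eqref{1.1} together with $\mu > 0$ from \eqref{1.2}. Hence $k > \rho > \sigma$ are three distinct eigenvalues. Integrality then follows from $d \in \mathbb{Z}$: reducing $d^2 - (\mu-\lambda)^2 = 4(t-\mu)$ modulo $2$ shows $d \equiv \mu - \lambda \pmod 2$, so $-(\mu-\lambda) \pm d$ is even and $\rho,\sigma \in \mathbb{Z}$.

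Finally, the multiplicities drop out of two linear equations. Writing $m_\rho, m_\sigma$ for the multiplicities, counting dimensions gives $1 + m_\rho + m_\sigma = n$, while taking the trace of $A$ (which is $0$, the digraph being loopless) gives $k + m_\rho \rho + m_\sigma \sigma = 0$. Solving this $2\times 2$ system yields $m_\rho = -(k + \sigma(n-1))/(\rho - \sigma)$ and $m_\sigma = (k + \rho(n-1))/(\rho-\sigma)$, which are the expressions in \eqref{1.4}. The one place demanding care --- the main obstacle --- is the diagonalizability step, since for a general non-normal $0/1$ matrix one cannot assume that algebraic and geometric multiplicities coincide; here it is precisely the distinctness $\rho \neq \sigma$, i.e.\ $d>0$, that guarantees the trace computation counts each eigenspace correctly.
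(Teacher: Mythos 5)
The paper itself offers no proof of this proposition: it is quoted directly from Duval \cite{A}, so there is no internal argument to compare yours against. Your proof is correct and is, in substance, the classical one: $\mathbf{1}$ is simultaneously a left and right eigenvector, the hyperplane $W=\{v:\mathbf{1}^{\mathsf T}v=0\}$ is $A$-invariant, $J$ vanishes on $W$, so identity (ii) forces the squarefree quadratic $x^{2}-(\lambda-\mu)x-(t-\mu)$ to annihilate $A|_{W}$, and the dimension and trace equations then determine the multiplicities. Two points deserve attention. First, your integrality step imports from Proposition \ref{p-1} the assertion that $d$ is a positive integer; in Duval's own development that assertion is deduced \emph{after} the spectrum is computed (one first obtains the multiplicity formulas with $d=\sqrt{(\mu-\lambda)^{2}+4(t-\mu)}$ possibly irrational, and then shows that integrality of $m_{\rho},m_{\sigma}$ together with $0<t<k$ forces $d\in\mathbb{Z}$), so as a free-standing proof of Duval's theorem your route is mildly circular; within this paper, where Proposition \ref{p-1} is stated beforehand as a known fact, the borrowing is legitimate. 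Second, to justify the phrase ``three distinct eigenvalues'' you should also verify $m_{\rho}\geq 1$ and $m_{\sigma}\geq 1$, since your argument only shows that the spectrum is \emph{contained} in $\{k,\rho,\sigma\}$: here $m_{\sigma}=0$ would force $\rho=-k/(n-1)<0$, impossible because $t\geq\mu$ gives $d\geq|\mu-\lambda|$ and hence $\rho\geq 0$; and $m_{\rho}=0$ would force $A|_{W}=\sigma I_{W}$, hence $A=\sigma I+\frac{k-\sigma}{n}J$, which for a loopless $0/1$ matrix means $A=J-I$, i.e.\ $t=k$, excluded by the standing hypothesis $0<t<k$.
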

\begin{proposition}(\cite{A})\label{p-3}
If $G$ is a DSRG with parameters $( n, k, \mu ,\lambda , t)$,\;then its complement $G'$ is also a DSRG with parameters $(n',k',\mu',\lambda' ,t')$,\;where $k'=(n-2k)+(k-1)$,\;$\lambda'=(n-2k)+(\mu-2)$,\;$t'=(n-2k)+(t-1)$ and $\mu'=(n-2k)+\lambda$.\;
\end{proposition}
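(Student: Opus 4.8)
The plan is to work entirely with adjacency matrices and the matrix characterization of a DSRG recalled in the excerpt, namely conditions (i) $JA=AJ=kJ$ and (ii) $A^2=tI+\lambda A+\mu(J-I-A)$. Writing $A$ for the adjacency matrix of $G$, the complement $G'$ has adjacency matrix $A'=J-I-A$, which is again a $(0,1)$-matrix with zero diagonal; so it suffices to verify that $A'$ satisfies (i) and (ii) and to read off the primed parameters along the way. First I would dispatch condition (i): using $J^2=nJ$ together with $JA=AJ=kJ$, a direct computation gives $JA'=A'J=(n-1-k)J$, so $A'$ is $k'$-regular with $k'=n-1-k=(n-2k)+(k-1)$, matching the claim.

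The heart of the argument is the computation of $(A')^2$. Expanding $(J-I-A)^2$ and repeatedly applying $J^2=nJ$ and $JA=AJ=kJ$ yields
\[
(A')^2=(n-2-2k)J+I+2A+A^2.
\]
I would then substitute the quadratic relation (ii) for $A^2$ and collect terms, expressing $(A')^2$ as an explicit linear combination $\alpha I+\beta A+\gamma J$ with $\alpha=1+t-\mu$, $\beta=2+\lambda-\mu$, and $\gamma=(n-2-2k)+\mu$.

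Finally I would re-express this in the target DSRG form $t'I+\lambda' A'+\mu'(J-I-A')$. The key identity here is $J-I-A'=A$, so the target form rewrites as $(t'-\lambda')I+(\mu'-\lambda')A+\lambda' J$. Matching the coefficients of $I$, $A$, and $J$ against $\alpha I+\beta A+\gamma J$ then forces $\lambda'=\gamma$, $\mu'-\lambda'=\beta$, and $t'-\lambda'=\alpha$, which solve to $\lambda'=(n-2k)+(\mu-2)$, $\mu'=(n-2k)+\lambda$, and $t'=(n-2k)+(t-1)$, exactly the stated values.

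The computation is elementary throughout, so I do not anticipate any genuine obstacle; the only point requiring a little care is the coefficient-matching step. This matching is legitimate because $I$, $A$, $J$ are linearly independent whenever $G$ is a nondegenerate DSRG (if $A$ were a combination of $I$ and $J$, then $G$ would be an empty or complete digraph), but in any case one may sidestep independence entirely by simply substituting the claimed parameters and checking the matrix identity directly. The main practical risk is therefore nothing more than an arithmetic slip in expanding $(J-I-A)^2$ or in the substitution for $A^2$.
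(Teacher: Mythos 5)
Your proposal is correct: the expansion $(A')^2=(n-2-2k)J+I+2A+A^2$, the substitution of $A^2=(t-\mu)I+(\lambda-\mu)A+\mu J$, and the coefficient matching via $J-I-A'=A$ all check out and yield exactly the stated parameters $k'=n-1-k$, $\lambda'=(n-2k)+(\mu-2)$, $\mu'=(n-2k)+\lambda$, $t'=(n-2k)+(t-1)$. Note that the paper itself gives no proof of this proposition --- it is quoted from Duval's original article \cite{A} --- and your matrix computation is precisely the standard argument used there, with the added care (correctly handled) that coefficient matching either needs linear independence of $I$, $A$, $J$ or can be bypassed by verifying the identity directly.
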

\begin{rmk}\label{mini}Let $D$ be a DSRG with parameters $(n,k,\mu,\lambda,t)$,\;and let $A$ be the adjacency matrix of $D$.\;Then $A$ has minimal polynomial of degree $3$,\;that is
 $(A-kI_n)(A^2+(\mu-\lambda)A+(\mu-t)I_n)=(A-kI_n)(A-\rho I_n)(A-\sigma I_n)=0$.\;
\end{rmk}

We introduce some notations of multisets.\;Let $A$ be a multiset together with a \emph{multiplicity function}
$\Delta_{A}:A\rightarrow \mathbb{N}$,\;where $\Delta_{A}(a)$ counting ``how many times of $a$ occurs in the multiset $A$".\;We say $x$ belongs to $A$\;(i.e.\;$x\in A$)\;if $\Delta_A(x)>0$.\;In the following, $A$ and $B$ are multisets,\;with multiplicity functions $\Delta_{A}$ and $\Delta_{B}$.\;

\begin{itemize}
  \item \textbf{Union,\;$A\uplus B$}:\;the union of multisets $A$ and $B$,\;is defined by $\Delta_{A\uplus B}=\Delta_A+\Delta_B$;
  \item \textbf{Scalar multiplication,\;$n\oplus A$}:\;the scalar multiplication of a multiset $A$ by a natural number $n$ by,\;is defined by $\Delta_{n\oplus A}=n\Delta_A$.
\end{itemize}
If $A$ and $B$ are usual sets,\;we use $A\cup B$,\;$A\cap B$ and  $A\setminus B$ denote the usual union,\;intersection and difference of $A$ and $B$.\;For example,\;if $A=\{1,2\}$ and $B=\{1,3\}$,\;then $A\uplus B=\{1,1,2,3\}$,\;$A\cup B=\{1,2,3\}$ and $A\setminus B=\{2\}$.\;

We will always use ``set'' to mean the usual set (i.e.,\;not multiset) unless otherwise indicated.

Let $G$ be a group and $e_G$ or $e$ be the identity element of $G$.\;For any subset $S$ of a multiplicative group $G$ and an integer $t$,\;we use $S^{(t)}$ to denote the set $\{a^{t}|a\in S\}$.\;A subset $S$ is called antisymmetric if $S\cap S^{(-1)}=\emptyset$.\;

Leif K.\;J{\o}rgensen in \cite{J1} showed that a DSRG cannot be a Cayley graph of an abelian group.\;The goal of Section 3 is to generalize this result.\;By using character theory of finite groups,\;we show that a Cayley graph $\mathcal{C}(G,S)$ is not a DSRG if $S$ is a union of some conjugate classes of $G$.\;To prove this theorem,\;we calculate the spectra of such Cayley graphs $\mathcal{C}(G,S)$ in terms of the irreducible characters of $G$,\;we have the following theorem.\;
\begin{thm}Let $\mathbf{Irr}(G)=\{\chi_1,\cdots,\chi_r\}$ be the set of irreducible characters of $G$,\;then the eigenvalues of Cayley graph $\mathcal{C}(G,S)$ with $\overline{S}\in \bm{\mathcal{Z}}(\mathbb{Z}[G])$ are $\lambda_1,\cdots,\lambda_r$ of multiplicities $n_1^2,n_2^2,\cdots,n_r^2$ respectively,\;where
\begin{equation*}
\lambda_i=\frac{1}{n_i}\sum_{s\in S}\chi_i(s)
\end{equation*}
for $1\leqslant i\leqslant r$,\;and $n_1,n_2,\cdots,n_r$ are degrees of $\chi_1,\cdots,\chi_r$ respectively.\;
\end{thm}
This lemma enables us to establish the following theorem.\;
\begin{thm}A DSRG cannot be a Cayley graph $\mathcal{C}(G,S)$ with $\overline{S}\in \bm{\mathcal{Z}}(\mathbb{Z}[G])$.\;
\end{thm}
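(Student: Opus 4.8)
The plan is to derive a contradiction from the spectral description supplied by the previous theorem, combined with the rigidity of a DSRG spectrum recorded in Proposition \ref{p-2} and Remark \ref{mini}. First I would observe that the hypothesis $\overline{S}\in\bm{\mathcal{Z}}(\mathbb{Z}[G])$ says precisely that $S$ is a union of conjugacy classes of $G$, so the previous theorem applies: the eigenvalues of $\mathcal{C}(G,S)$ are
\[
\lambda_i=\frac{1}{n_i}\sum_{s\in S}\chi_i(s),\qquad 1\leqslant i\leqslant r,
\]
each occurring with positive multiplicity $n_i^2$.

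Assume for contradiction that $\mathcal{C}(G,S)$ is a DSRG with adjacency matrix $A$. By Remark \ref{mini}, $A$ satisfies $(A-kI_n)(A-\rho I_n)(A-\sigma I_n)=0$, whose roots $k>\rho>\sigma$ are the three distinct integers of Proposition \ref{p-2}; in particular every eigenvalue of $A$ is real. Since each $\lambda_i$ is genuinely an eigenvalue of $A$ (its multiplicity $n_i^2\geqslant 1$ is positive), I would conclude $\lambda_i\in\mathbb{R}$ for all $i$, equivalently $\sum_{s\in S}\chi_i(s)\in\mathbb{R}$ for every irreducible character $\chi_i$.

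The core step is to convert this reality condition into symmetry of $S$. Using $\overline{\chi_i(s)}=\chi_i(s^{-1})$, the identity $\sum_{s\in S}\chi_i(s)=\overline{\sum_{s\in S}\chi_i(s)}$ becomes $\chi_i(\overline{S})=\chi_i(\overline{S^{(-1)}})$ for every $i$, where $\chi_i$ is extended linearly to $\mathbb{Z}[G]$. Both $\overline{S}$ and $\overline{S^{(-1)}}$ lie in $\bm{\mathcal{Z}}(\mathbb{Z}[G])$ (the inverse of a conjugacy class is again a conjugacy class), and the irreducible characters separate central elements, since the central characters $z\mapsto\bigl(\tfrac{1}{n_i}\chi_i(z)\bigr)_i$ give an isomorphism $\bm{\mathcal{Z}}(\mathbb{C}[G])\cong\mathbb{C}^r$. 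Hence $\overline{S}=\overline{S^{(-1)}}$ in $\mathbb{Z}[G]$, and comparing the coefficient of each $g\in G$ yields $S=S^{(-1)}$.

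Finally I would count two-cycles: in any Cayley graph the number of two-cycles through a fixed vertex is $|S\cap S^{(-1)}|$, so $t=|S\cap S^{(-1)}|$. With $S=S^{(-1)}$ this forces $t=|S|=k$, so $A$ is symmetric and the graph is an undirected SRG rather than a proper DSRG, contradicting $0<t<k$. The main obstacle is the core step, namely arguing that a real spectrum forces $S=S^{(-1)}$; this rests entirely on the fact that class sums are determined by their values under the irreducible characters, which is exactly where the hypothesis $\overline{S}\in\bm{\mathcal{Z}}(\mathbb{Z}[G])$ enters. I note in passing that specializing to abelian $G$ (where every subset satisfies the hypothesis) recovers J{\o}rgensen's result.
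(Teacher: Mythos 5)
Your proof is correct, but it reaches the contradiction by a genuinely different route than the paper. Both arguments start identically: by Theorem \ref{l-eigenvalues}, each $\lambda_i=\frac{1}{n_i}\chi_i(\overline{S})$ occurs in the spectrum with multiplicity $n_i^2\geqslant 1$, and the DSRG hypothesis forces all of these to be real. From there the paper splits $S=S_1\cup S_2$ according to whether the inverse class of each conjugacy class inside $S$ again lies in $S$; reality of the $\lambda_{1,i}$ coming from the symmetric part $S_1$ makes every eigenvalue of $\mathcal{C}(G,S_2)$ real, and this contradicts the imported Lemma \ref{l-withoutundirected} of Klin et al., which guarantees a non-real eigenvalue for any regular non-empty digraph without undirected edges. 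You instead convert reality directly into symmetry of $S$: from $\chi_i(\overline{S})=\chi_i(\overline{S^{(-1)}})$ for all $i$, and the fact that irreducible characters separate elements of $\bm{\mathcal{Z}}(\mathbb{C}[G])$ (which is exactly what Lemma \ref{l-Structure} supplies, since $\chi_i(e_j)=n_i\delta_{ij}$ by the first orthogonality relation, or equivalently the invertibility of the character table in Lemma \ref{l-irreduciblecharacters}(5)), you conclude $S=S^{(-1)}$, whence $t=|S\cap S^{(-1)}|=|S|=k$, contradicting the standing convention $0<t<k$. Your route buys two things: it stays entirely inside character theory, never invoking the graph-theoretic Lemma \ref{l-withoutundirected}; and it treats uniformly the degenerate case $S=S^{(-1)}$ (i.e.\ $S_2=\emptyset$), which the paper's proof technically glosses over, since Lemma \ref{l-withoutundirected} applies only to non-empty graphs. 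What the paper's splitting buys is reusability: the same symmetric/antisymmetric decomposition is exactly what gets generalized in Section 4 (Theorem \ref{t-normalizer} and its corollary), where the connection set is only partially central and your character-separation step would no longer be available.
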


In Section $4$,\;a more general theorem is concerned.\;We show that a Cayley graph $\mathcal{C}(G,S)$ is not a DSRG if $S=\mathbf{C}\cup T$,\;where $C$ is a union of some conjugate classes of $G$ and $T$ is a subset of $G\setminus \mathbf{C}$ such that $T\cap T^{(-1)}=\emptyset$.\;

 Art M. Duval and Dmitri Iourinski \cite{D} constructed DSRGs by using semidirect product.\;This method motivates us to investigate Cayley graphs $\mathcal{C}(N\rtimes_\theta H,S)$.\;In Section 5,\;by using induced representations,\;we give the characteristic polynomial and  minimal polynomial of $\mathcal{C}(N\rtimes_\theta H, N_1\times H)$ with $N_1\subseteq N$.\;Based on this result,\;we can generalize the semidirect product construction method of Art M.\;Duval and Dmitri Iourinski in \cite{D} and  obtain a larger family of DSRGs.\;We get the following construction.\;
\begin{con}Let $\mathcal{C}(N\rtimes_\theta H, N_1\times H)$ be a Cayley graph such that $$\overline{N_1^\flat}=x_1\overline{N}+x_2 e_N$$ for some integers $d>x_1>0$ and $x_2<0$,\;then the
Cayley graph $\mathcal{C}(N\rtimes_\theta H, N_1\times H)$ is a DSRG with parameters $$\left(md, x_1m+x_2, \frac{x_1(x_1m+x_2)}{d}, x_2+ \frac{x_1(x_1m+x_2)}{d},  \frac{x_1(x_1m+x_2)}{d}\right).$$
\end{con}
The notation $\overline{N_1^\flat}$ appearing in the above construction will be defined in (\ref{musical}).\;

Let $C_n=\langle x:x^n=e\rangle$ be a (multiplicative) cyclic group of order $n$,\;generated by $x$.\;The \emph{dihedral group} $D_n$ is the group of symmetries of a regular polygon,\;and it can be viewed as a semidirect product of two cyclic groups $C_n=\langle x\rangle$ of order $n$ and $C_2=\langle a\rangle$ of order $2$.\;The presentation of $D_n$ is $D_n=C_n\rtimes C_2=\langle x,a|x^n=a^2=e,ax=x^{-1}a\rangle$.\;The cyclic group $C_n$ is a normal subgroup of $D_n$.\;All subgroups of $C_n$ are of form $\langle x^v\rangle$,\;with $v$ as a positive divisor of $n$.\;Let $\langle x^v\rangle$ be a subgroup of $C_n$,\;then a transversal for $\langle x^v\rangle$ in $C_n$ is $\{e,x^1,\cdots,x^{v-1}\}$.\;Let $T$ be a subset of $\{e,x^1,\cdots,x^{v-1}\}$,\;we define
\[T\langle x^v\rangle=\bigcup_{x^a\in T}x^a\langle x^v\rangle,\]
where $x^a\langle x^v\rangle$ are coset of $\langle x^v\rangle$ in $C_n$,\;for $x^a\in T$.\;

In Section $6$,\;we construct some directed strongly regular Cayley graphs on dihedral groups.\;In the following constructions,\;$v$ is a positive divisor of $n$ and $l=\frac{n}{v}$.\;
\begin{con}
Let $v$ be an odd positive divisor of $n$.\;Let $T$ be a subset of $\{x^1,\cdots,x^{v-1}\}$,\;and $X$ be a subset of ${C}_n$ satisfy the following conditions:\\
$(i)$\;$X=T\langle x^v\rangle$.\\
$(ii)$\;$X\cup X^{(-1)}=C_{n}\setminus \langle x^v\rangle$.\\
Then the Cayley graph $\mathcal{C}(D_n,X\cup Xa)$ is a DSRG with parameters $\left(2n,n-l,\frac{n-l}{2},\frac{n-l}{2}-l,\frac{n-l}{2}\right)$.\;
\end{con}

\begin{con}Let $v>2$ be an even positive divisor of $n$.\;Let $T$ be a subset of $\{x^1,\cdots,x^{v-1}\}$,\\and $X$ be a subset of ${C}_n$ satisfy the following conditions:\\
$(i)$\;$X=T\langle x^v\rangle$.\\
$(ii)$\;$X\cup X^{(-1)}=({C}_{n}\setminus \langle x^v\rangle)\uplus(x^{\frac{v}{2}}\langle x^v\rangle)$.\\
$(iii)$\;$X\cup (x^{\frac{v}{2}}X)={C}_{n}$.\\
Then the Cayley graph $\mathcal{C}(D_n,X\cup Xa)$ is a DSRG with parameters $\left(2n,n,\frac{n}{2}+l,\frac{n}{2}-l,\frac{n}{2}+l\right)$.\;
\end{con}

\begin{con}Let $v$ be an odd positive divisor of $n$.\;Let $T$ be a subset of $ \{e,x^1,\cdots,x^{v-1}\}$ with $e\in T$,\;and $X,Y\subseteq{C}_n$ satisfy the following conditions:\\
$(i)$\;$Y=T\langle x^v\rangle=X\cup \langle x^v\rangle$.\\
$(ii)$\;$Y\cup Y^{(-1)}=C_{n}\uplus \langle x^v\rangle$.\\
Then the Cayley graph $\mathcal{C}(D_n,X\cup Ya)$ is a DSRG with parameters $\left(2n,n,\frac{n+l}{2},\frac{n-l}{2},\frac{n+l}{2}\right)$.\;
\end{con}

The above constructions partially generalize the earlier results of Mikhail Klin,\;Akihiro Munemasa,\;Mikhail Muzychuk,\;and Paul Hermann Zieschang in \cite{K1}.

We consider the problem of characterizing directed strongly regular Cayley graphs on dihedral groups.\;We give a characterization of the directed strongly regular Cayley graph $\mathcal{C}(D_n,X\cup Xa)$ with $X\cap X^{(-1)}=\emptyset$.\;We have been proved:
\begin{thm}A Cayley graph $\mathcal{C}(D_n,X\cup Xa)$ with $X\cap X^{(-1)}=\emptyset$ is a DSRG with parameters $\left(2n,2|X|,\mu,\lambda,t\right)$ if and only if there exists a subset $T$ of $\{x^1,\cdots,x^{v-1}\}$ satisfies the following conditions:\\
$(i)$\;$X=T\langle x^v\rangle$;\\
$(ii)$\;$X\cup X^{(-1)}=C_{n}\setminus \langle x^v\rangle$,
where $v=\frac{n}{\mu-\lambda}$.\;
\end{thm}

\section{Preliminaries}
We now give some basic definitions and notations of \emph{Cayley graph},\;\emph{Cayley multigraph},\;\emph{group ring},\;\emph{group algebra} and representation theory.\;

Let $\Gamma$ be a digraph and the adjacent matrix of $\Gamma$ is denoted by $\textbf{A}(\Gamma)$.\;For a matrix $A$,\;we denote the characteristic polynomial and the minimal polynomial of $A$ by $\mathcal{F}_A(x)$ and $\mathcal{M}_A(x)$.\;
\begin{dfn}\label{d-Cayleydigraph}(Cayley graph)
Let $G$ be a finite group and $S\subseteq G\backslash \{e_G\}$.\;The Cayley graph of $G$ generated
by $S$,\;denoted by $\mathcal{C}(G,S)$,\;is the digraph $\Gamma$ such that $V(\Gamma)=G$ and $x\rightarrow y$ if and only if
$yx^{-1}\in S$,\;for any $x,y\in G$.
\end{dfn}

A multigraph version of Cayley graph is Cayley multigraph.\;
\begin{dfn}\label{d-Cayleymultidigraph}(Cayley multigraph)
Let $G$ be a finite group and $S$ be a multisubset of $G\backslash \{e_G\}$.\;The Cayley multigraph of $G$ generated
by $S$,\;denoted by $\mathcal{C}(G,S)$,\;is the digraph $\Gamma$ such that $V(\Gamma)=G$ and the number of acrs from $x$ to $y$ is $\Delta_{S}(y^{-1}x)$.
\end{dfn}

\begin{dfn}\label{d-GroupRing}(Group Ring)
For any group $G$ and ring $R$,\;the group ring $R[G]$ of $G$ over $R$,\;denoted by $R[G]$,\;consists of all finite formal
sums of elements of $G$,\;with coefficients from $R$.\;i.e.,
\begin{equation*}
R[G]=\left\{\sum_{g\in G}r_gg:r_g\in R,\; r_g\neq 0\text{ for finite g}\right\}.\;
\end{equation*}
The operations $+$ and $\cdot$ on $R[G]$ are given
by
\begin{equation*}
\sum_{g\in G}r_gg+\sum_{g\in G}s_gg=\sum_{g\in G}(r_g+s_g)g,
\end{equation*}
\begin{equation*}
\left(\sum_{g\in G}r_gg\right)\cdot\left(\sum_{g\in G}r_gg\right)=\left(\sum_{g\in G}t_gg\right),\;t_g=\sum_{g'g''=g}r_{g'}s_{g''}.
\end{equation*}
\end{dfn}

The \emph{center} of $R[G]$ is the set of elements of $R[G]$ which commute with all the elements in $R[G]$.\;We denote the center of $R[G]$ by $\bm{\mathcal{Z}}(R[G])$.\;

For any multisubset $X$ of $G$,\;let $\overline{X}$ denote the element of the group ring $R[G]$ that is the sum of all elements of $X$,\;i.e.,
\begin{equation*}
\overline{X}=\sum_{x\in X}\Delta_X(x)x.
\end{equation*}

The lemma below allows us to express a sufficient and necessary condition for a Cayley graph to be directed strongly regular in terms of group ring.
\begin{lem}\label{l-DSRGCayleyGraphsGruopRing}
A Cayley graph $\mathcal{C}(G,S)$  is a DSRG with parameters $(n, k, \mu,  \lambda, t)$ if and only if $|G| = n$, $|S|= k$, and
\[\overline{S}^2=te+\lambda\overline{S}+\mu(\overline{G}-e-\overline{S}).\]
\end{lem}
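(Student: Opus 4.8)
The plan is to translate the matrix characterization of a DSRG (conditions (i) and (ii) from the introduction) into the group ring via the regular representation, and to observe that the two formulations are equivalent because that representation is injective. First I would dispose of condition (i). For any Cayley graph $\mathcal{C}(G,S)$ the adjacency matrix $A$ has entries $A_{x,y}=1$ exactly when $yx^{-1}\in S$, so each row sum $\sum_{y}A_{x,y}$ and each column sum $\sum_{x}A_{x,y}$ equals $|S|$; hence $AJ=JA=|S|J$ holds automatically. Thus condition (i) is equivalent to the bookkeeping requirements $|G|=n$ and $|S|=k$, and the entire content of the lemma sits in condition (ii).

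For condition (ii) I would introduce the map $M\colon \mathbb{Z}[G]\to M_n(\mathbb{Z})$ sending $\alpha=\sum_{g}a_g g$ to the matrix with $(x,y)$-entry $a_{yx^{-1}}$. A direct check gives $M(e)=I$, $M(\overline{G})=J$, and $M(\overline{S})=A$, and $M$ is clearly additive. The crux is the multiplicative behaviour: I would verify the key entry identity
\[
(A^2)_{x,y}=\sum_{z\in G}[\,zx^{-1}\in S\,]\,[\,yz^{-1}\in S\,]=\#\{(s,s')\in S\times S: s's=yx^{-1}\},
\]
the right-hand side being precisely the coefficient of $yx^{-1}$ in $\overline{S}^2$; that is, $M(\overline{S}^2)=A^2$. (Equivalently, one checks that $M$ is an algebra anti-homomorphism, so $M(\overline{S}\cdot\overline{S})=M(\overline{S})M(\overline{S})$ regardless.) Combining these, the image under $M$ of the group ring element $te+\lambda\overline{S}+\mu(\overline{G}-e-\overline{S})$ is exactly $tI+\lambda A+\mu(J-I-A)$.

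It then remains to read off the equivalence in both directions. Since the coefficients of $\alpha$ can be recovered from $M(\alpha)$ (e.g. $a_g=M(\alpha)_{e,g}$), the map $M$ is injective, so the matrix identity $A^2=tI+\lambda A+\mu(J-I-A)$ holds if and only if $\overline{S}^2=te+\lambda\overline{S}+\mu(\overline{G}-e-\overline{S})$ holds in $\mathbb{Z}[G]$; interpreting the diagonal entries ($yx^{-1}=e$) as the count of $2$-cycles $t$, the in-neighbourhood entries ($yx^{-1}\in S$) as $\lambda$, and the remaining entries as $\mu$, this is precisely condition (ii) of the DSRG definition. I expect the only genuinely delicate step to be fixing the multiplication conventions so that $\overline{S}^2$ matches $A^2$ entrywise — specifically confirming that a length-two path $x\to z\to y$ contributes $s's=yx^{-1}$ with $s=zx^{-1}$, $s'=yz^{-1}$, rather than the reversed product — and correctly aligning the $t$, $\lambda$, $\mu$ cases with the diagonal, $S$, and complementary entries; once the convention is pinned down, everything else is routine verification.
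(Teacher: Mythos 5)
Your proof is correct and complete: the reduction of condition (i) to $|G|=n$, $|S|=k$ via the row/column-sum count, the entrywise verification that $M(\overline{S}^2)=A^2$ (with the anti-homomorphism subtlety rightly noted as harmless when squaring a single element), and the injectivity of $M$ together yield exactly the stated equivalence. The paper states this lemma without proof, but your map $M$ is (up to transpose) the regular representation that the paper itself invokes in Section 3 when it identifies $\mathbf{A}(\mathcal{C}(G,S))$ with $\rho_{reg}(\overline{S})$, so your argument is essentially the standard one the paper leaves implicit.
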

\subsection{Representation theory}
This section is based on the book J.\;L.\;Alperin and Rowen B.\;Bell,\;\emph{Groups and representations}\cite{JL}.\;

Let $F$ be a field  and $G$ be a finite group,\;then the group ring $F[G]$ is not only a ring but also an $F$-vector space having $G$ as a basis and hence having finite dimension $|G|$.\;In this case,\;$F[G]$ becomes an algebra which we call it \emph{group algebra}.\

Let $V$ be a finite-dimensional vector space over the field $F$.\;We define the general linear group $GL(V)$ to be the group of all invertible linear transformations of $V$.\;We use notation $\mathbf{1}_V$ to denote the \emph{identity linear transformation} of $V$.\;We define the \emph{general linear group} $GL_{n}(F)$ to be the set consisting of all invertible matrices over $F$ of order $n$.

An (finite-dimensional) $F$-\emph{linear representation} $(V,\rho)$ of $G$ in $V$ is a homomorphism $\rho:G\rightarrow GL(V)$.\;If $V$ is a finite-dimensional vector space,\;the degree of $(V,\rho)$ is the dimension of $V$ and we denote it by $\deg\rho$.\;We also say that $V$ is a \emph{representation space} of $G$ with respect to the representation $\rho$.\;

Let $(V,\rho)$ be an $F$-linear representation.\;A subspace $W$ of $V$ is $G$-\emph{invariant},\;if for all $u\in W$ and  $g\in G$,\;we have $\rho(g)(u)\in W$.\;If $W$  is  a $G$-\emph{invariant subspace},\;we can restrict $\rho$ on $W$ and obtain a representation $\rho|W:G\rightarrow GL(W)$.\;We say $\rho|W$ is a \emph{subrepresentation} of $\rho$.\;An $F$-linear representation $(V,\rho)$ is \emph{irreducible} if $V$ doesn't have any untrivial subrepresentation.\;

Let $(V,\rho)$ be a representation of  group $G$ with $\dim_{F}V=n$,\;let $B=(e_i)$ be a basis of $V$,\;and let $\rho_B(g)$ be the matrix of $\rho(g)$ with respect to the basis $B$ for any $g\in G$.

Let $\rho$ and $\rho'$ be two representations of the same group $G$ in vector spaces $V$ and $V'$ respectively,\;these representations are said to be \emph{equivalent} or \emph{isomorphic} if there exists a linear isomorphism $\omega:V\rightarrow V'$ such that the diagram
\begin{equation*}
\xymatrix{
V \ar[d]_\omega \ar[r]^{\rho(g)} & V \ar[d]^\omega       \\
   V' \ar[r]_{\rho(g)}   & V'
}
\end{equation*}
commutes for each $g\in G$.\;

Let $\overline{\mathbf{Irr}}_F(G)$ be the set of all the non-equivalent irreducible $F$-representations of $G$.\;

The \emph{left regular representation} of $G$ is the homomorphism $\rho_{reg}:G\rightarrow GL(\mathbb{C}[G])$ defined by
\[\rho_{reg}(g)\left(\sum_{h\in G}r_hh\right)=\sum_{h\in G}r_h(gh)\]
for $g\in G$.\;

\subsection{Character theory of finite groups}
Let $F=\mathbb{C}$ and $(V,\rho)$ be a $\mathbb{C}$-linear representation of $G$,\;the $\mathbb{C}$-character $\chi_{\rho}:G\rightarrow \mathbb{C}$ of $\rho$ is defined by
\[\chi_{\rho}(g)=\mathrm{tr}\rho(g),\]
where $\mathrm{tr}$ is the \emph{trace function}.\;The kernel of $\chi_{\rho}$ is defined by $\mathcal{K}_{\chi_\rho}=\{g\in G|\chi_\rho(g)=\chi_\rho(1)\}$,\;which is a normal subgroup of $G$.\;

The character of an irreducible representation is called an \emph{irreducible  character}.\;Let $\mathbf{Irr}(G)$ be the set of all the irreducible $\mathbb{C}$-characters of $G$.\;

Let $G$ be a finite group with class number $r$,\;i.e.,\;the number of distinct conjugacy classes of $G$.\;Let $C_1,\;C_2,\;\cdots,\;C_r$ be the $r$ conjugacy classes of $G$,\;and $g_1,\;g_2,\;\cdots,\;g_r$ be the representatives of the $r$ conjugacy classes respectively.\;The following lemmas give some elementary theorems in character theory,\;which can be referred to \cite{JL} and \cite{JP}.

\begin{lem}\label{l-irreduciblecharacters}Let $G$ be a finite group.\;Then we have the following theorems:\\
(1)The number of all the non-equivalent irreducible $\mathbb{C}$-representations (characters) of $G$ is equal to $r$,\;the class number of $G$.\;Equivalently,\;$$|\overline{\mathbf{Irr}}_{\mathbb{C}}(G)|=|\mathbf{Irr}(G)|=r.\;$$
(2)For any $\mathbb{C}$-linear representation $(V,\rho)\in\overline{\mathbf{Irr}}_{\mathbb{C}}(G)$,\;$\chi_{\rho}(1)=\deg\rho=\dim V$.\;\\
(3)(First orthogonality relation) Let $\chi_1,\cdots,\chi_r$ be all the $r$ irreducible $\mathbb{C}$-characters of $G$,\;then
\[\frac{1}{|G|}\sum_{h=1}^r\chi_i(g_h)\overline{\chi_j(g_h)}|C_h|=\delta_{ij}\]
for any $1\leqslant i,j\leqslant r$.\\
(4)(Second orthogonality relation)Let $g_1,g_2,\cdots,g_r$ be the conjugacy class representatives of $G$,\;then
\[\frac{1}{|G|}\sum_{h=1}^r\chi_h(g_i)\overline{\chi_h(g_j)}|C_i|=\delta_{ij}\]
for any $1\leqslant i,j\leqslant r$.\;\\
(5)(Character table) The character table of $G$ is a $r\times r$ matrix $\mathcal{X}=(\chi_i(g_j))_{r\times r}$.\;Let $D_1=\mathrm{diag}\{|C_1|,\;\cdots,\;|C_r|\}$ and $D_2=\mathrm{diag}\{\chi_1(1),\cdots,\chi_r(1)\}\overset{\text{\textmd{def}}}=\mathrm{diag}\{n_1,\cdots,n_r\}$.\;Then the orthogonality relations can be rewritten by the matrices form:
\begin{equation}\label{CharacterTable}
\mathcal{X}D_1\overline{\mathcal{X}}^{T}=|G|I_r,\;\overline{\mathcal{X}}^{T}\mathcal{X}=|G|D_1^{-1}.
\end{equation}
\end{lem}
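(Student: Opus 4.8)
The plan is to establish all five statements from two structural cornerstones available because $F=\mathbb{C}$ is an algebraically closed field of characteristic zero and $G$ is finite: Maschke's theorem, which makes $\mathbb{C}[G]$ semisimple so that every finite-dimensional representation decomposes as a direct sum of irreducibles, and Schur's lemma, which over $\mathbb{C}$ gives $\dim_{\mathbb{C}}\mathrm{Hom}_G(V,W)=1$ when $V\cong W$ are irreducible and $0$ when $V\not\cong W$ are irreducible. Part $(2)$ can be disposed of immediately: since $\rho(1)=\mathbf{1}_V$, we have $\chi_\rho(1)=\mathrm{tr}\,\mathbf{1}_V=\dim_{\mathbb{C}}V=\deg\rho$.

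For the first orthogonality relation $(3)$ I would run the classical matrix-coefficient argument. Fixing irreducible representations $\rho_i,\rho_j$ with matrix entries $\rho_i(g)=(a_{pq}(g))$ and $\rho_j(g)=(b_{st}(g))$, and taking an arbitrary linear map $M$, the averaged map $\Phi=\frac{1}{|G|}\sum_{g\in G}\rho_i(g)\,M\,\rho_j(g)^{-1}$ is a $G$-homomorphism. Schur's lemma pins $\Phi$ down (zero when $i\neq j$; a scalar determined by $\mathrm{tr}\,M$ when $i=j$), and reading off entries yields the Schur orthogonality relations for matrix coefficients. Passing to a unitary realization so that $\rho_j(g)^{-1}=\overline{\rho_j(g)}^{\,T}$, summing the appropriate diagonal entries converts these into traces, and grouping the sum over $G$ into conjugacy classes (each representative $g_h$ contributing $|C_h|$ equal terms, since $\chi$ is a class function) gives exactly $\frac{1}{|G|}\sum_{h}\chi_i(g_h)\overline{\chi_j(g_h)}|C_h|=\delta_{ij}$.

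The count in $(1)$ is the crux, and $(4)$, $(5)$ are then formal consequences. Relation $(3)$ already shows the distinct irreducible characters are orthonormal, hence linearly independent, in the space $\mathrm{CF}(G)$ of class functions, which has dimension exactly $r$ (a basis being the indicators of the $r$ conjugacy classes); this gives $|\mathbf{Irr}(G)|\leqslant r$. For the reverse inequality I would show that any class function $f$ orthogonal to every irreducible character vanishes: the operator $T_\rho=\sum_{g}\overline{f(g)}\,\rho(g)$ commutes with $\rho$ for each irreducible $\rho$ (because $f$ is a class function), so Schur forces $T_\rho$ to be scalar, and taking traces identifies that scalar with a multiple of $\langle f,\chi_\rho\rangle=0$; applying this to the regular representation $\rho_{reg}$ and evaluating at $e$ forces $f=0$. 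Thus $\mathbf{Irr}(G)$ is an orthonormal basis of $\mathrm{CF}(G)$, so $|\mathbf{Irr}(G)|=r$, and via the isomorphism-equals-equal-character bijection also $|\overline{\mathbf{Irr}}_{\mathbb{C}}(G)|=r$. For $(5)$, relation $(3)$ is precisely $\mathcal{X}D_1\overline{\mathcal{X}}^{T}=|G|I_r$ in matrix form; since $\mathcal{X}$ is now square, this shows it is invertible with $\overline{\mathcal{X}}^{T}=|G|D_1^{-1}\mathcal{X}^{-1}$, whence $\overline{\mathcal{X}}^{T}\mathcal{X}=|G|D_1^{-1}$, and reading this identity entrywise recovers the second orthogonality relation $(4)$.

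I expect the main obstacle to be the completeness half of $(1)$: the inequality $|\mathbf{Irr}(G)|\leqslant r$ is a one-line consequence of orthonormality, but showing that the irreducible characters actually span all of $\mathrm{CF}(G)$ requires the Schur-averaging argument on the regular representation and is the point where the finiteness of $G$ and the algebraic closure of $\mathbb{C}$ are genuinely used. Everything else—part $(2)$, the passage from Schur orthogonality to $(3)$, and the matrix reformulations $(4)$ and $(5)$—is routine linear algebra once that completeness is in hand.
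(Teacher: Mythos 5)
Your proof is correct, and it is the standard argument: the paper itself gives no proof of this lemma, simply citing the textbooks \cite{JL} and \cite{JP}, whose treatment (Maschke plus Schur, matrix-coefficient averaging for the first orthogonality relation, completeness of characters in the space of class functions via the averaging operator on the regular representation, then the matrix reformulation yielding the second orthogonality relation) is exactly the route you take. In particular you correctly identify the completeness half of (1) as the only non-formal step; everything checks out.
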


The inversion formula is given now.\;
\begin{lem}\label{l-Inversionformula}(Inversion formula) Let $G$ be a finite group and $A=\sum\limits_{g\in G}a_gg\in \mathbb{C}[G]$,\;then
\[a_g=\frac{1}{|G|}\sum_{\chi\in\mathbf{ Irr}(G)}\chi(Ag^{-1})\chi(e),\forall g\in G.\]
In particular,\;if $G$ is an abelian group,\;then the above formula becomes
\[a_g=\frac{1}{|G|}\sum_{\chi\in\mathbf{ Irr}(G)}\chi(A)\overline{\chi(g)},\forall g\in G.\]
\end{lem}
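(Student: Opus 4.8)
The plan is to reduce the inversion formula to the second (column) orthogonality relation recorded in Lemma~\ref{l-irreduciblecharacters}(4). Throughout I extend each irreducible character $\chi$ linearly to the whole group algebra, writing $\chi(B)=\sum_{h\in G}b_h\chi(h)$ for $B=\sum_{h\in G}b_hh\in\mathbb{C}[G]$; this is the only sense in which $\chi$ is evaluated on group-ring elements, and it is the sense intended in the statement.

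First I would expand the right-hand side directly. Since $Ag^{-1}=\sum_{h\in G}a_h\,hg^{-1}$, linearity gives $\chi(Ag^{-1})=\sum_{h\in G}a_h\chi(hg^{-1})$, so that
\[
\frac{1}{|G|}\sum_{\chi\in\mathbf{Irr}(G)}\chi(Ag^{-1})\chi(e)=\frac{1}{|G|}\sum_{h\in G}a_h\sum_{\chi\in\mathbf{Irr}(G)}\chi(hg^{-1})\chi(e),
\]
after interchanging the two finite summations. Everything now hinges on evaluating the inner sum $\sum_{\chi}\chi(x)\chi(e)$ at $x=hg^{-1}$.

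The key step is the identity $\sum_{\chi\in\mathbf{Irr}(G)}\chi(x)\chi(e)=|G|\,\delta_{x,e}$ for all $x\in G$. I would obtain it by specializing Lemma~\ref{l-irreduciblecharacters}(4): let the free variable $x=g_i$ occupy the $i$-slot and fix $g_j=e$ in the $j$-slot. Because $\chi(e)$ is a positive integer we have $\overline{\chi(g_j)}=\chi(e)$, and the relation reads $\frac{|C_i|}{|G|}\sum_{\chi}\chi(g_i)\chi(e)=\delta_{i,j_0}$, where $j_0$ indexes the identity class. The right-hand side is nonzero only when $g_i=e$, where $|C_i|=1$, giving $\sum_{\chi}\chi(g_i)\chi(e)=|G|\delta_{g_i,e}$; since both sides are class functions of $x$ and $\{e\}$ is itself a conjugacy class, this extends from class representatives to all $x\in G$. (Equivalently, $\sum_{\chi}\chi(e)\chi$ is the character of the regular representation, which is $|G|$ at $e$ and $0$ elsewhere.) Substituting $x=hg^{-1}$, the inner sum equals $|G|$ precisely when $h=g$ and vanishes otherwise, so the double sum collapses to $a_g$, proving the general formula.

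Finally I would deduce the abelian case as a specialization. When $G$ is abelian every irreducible character has degree one, so $\chi(e)=1$ and $\chi$ is a homomorphism $G\to\mathbb{C}^{\times}$ whose linear extension is multiplicative on $\mathbb{C}[G]$; hence $\chi(Ag^{-1})=\chi(A)\chi(g^{-1})=\chi(A)\overline{\chi(g)}$, the last equality because character values are roots of unity. Inserting $\chi(e)=1$ and this factorization into the general formula yields $a_g=\frac{1}{|G|}\sum_{\chi}\chi(A)\overline{\chi(g)}$. The computation is otherwise routine; the one point demanding care is the specialization of column orthogonality at the identity class, which supplies the normalizing mass $|G|$ that inverts the character transform.
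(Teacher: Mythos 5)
Your proof is correct: the paper states this lemma without proof (deferring to the standard references \cite{JL} and \cite{JP}), and your derivation---linearly extending characters to $\mathbb{C}[G]$, reducing to the identity $\sum_{\chi\in\mathbf{Irr}(G)}\chi(x)\chi(e)=|G|\,\delta_{x,e}$ via the second orthogonality relation (equivalently, recognizing $\sum_{\chi}\chi(e)\chi$ as the regular character), and specializing to degree-one characters with $\chi(g^{-1})=\overline{\chi(g)}$ for the abelian case---is precisely the standard argument that fills this gap. The one delicate point, passing from class representatives to all $x\in G$ using that $\sum_{\chi}\chi(x)\chi(e)$ is a class function and $\{e\}$ is its own conjugacy class, is handled explicitly and correctly.
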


\begin{cor}Let $G$ be an abelian group and $A,B\in \mathbb{C}[G]$,\;then $A=B$ if and only if
$\chi(A)=\chi(B)$  for all $\chi\in \mathbf{ Irr}(G)$.\;
\end{cor}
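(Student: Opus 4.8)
The plan is to deduce this corollary directly from the abelian case of the Inversion formula (Lemma~\ref{l-Inversionformula}), which already does essentially all the work. The key observation is that for an abelian group $G$, every irreducible character is one-dimensional, and $\chi$ extends $\mathbb{C}$-linearly to the whole group algebra $\mathbb{C}[G]$ via $\chi\left(\sum_g c_g g\right)=\sum_g c_g \chi(g)$; it is this extension that appears in both the statement and the inversion formula, so the two are perfectly matched.

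First I would dispatch the forward implication, which is immediate: if $A=B$ in $\mathbb{C}[G]$, then since $\chi$ is a well-defined $\mathbb{C}$-linear functional on $\mathbb{C}[G]$ we get $\chi(A)=\chi(B)$ for every $\chi\in\mathbf{Irr}(G)$ with no further argument. For the converse, I would write $A=\sum_{g\in G}a_g g$ and $B=\sum_{g\in G}b_g g$ and apply the abelian inversion formula to each, recovering the coefficients as
\[
a_g=\frac{1}{|G|}\sum_{\chi\in\mathbf{Irr}(G)}\chi(A)\overline{\chi(g)},\qquad
b_g=\frac{1}{|G|}\sum_{\chi\in\mathbf{Irr}(G)}\chi(B)\overline{\chi(g)}.
\]
The hypothesis $\chi(A)=\chi(B)$ for all $\chi\in\mathbf{Irr}(G)$ makes the two sums termwise identical, so $a_g=b_g$ for every $g\in G$, whence $A=B$.

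There is no real obstacle here: the entire content is carried by the inversion formula, and the corollary is little more than a restatement of the injectivity of the map $A\mapsto(\chi(A))_{\chi\in\mathbf{Irr}(G)}$. The only point that deserves a moment's care is that the index set of the hypothesis, namely $\mathbf{Irr}(G)$, coincides exactly with the summation index in the coefficient-recovery formula, so that controlling $\chi(A)-\chi(B)$ over all $\chi$ controls every summand and hence every coefficient. For that reason I expect the proof to occupy only a few lines, consisting of the linearity remark for one direction and a single application of Lemma~\ref{l-Inversionformula} for the other.
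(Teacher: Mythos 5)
Your proof is correct and matches the paper's intended route exactly: the paper states this as an immediate corollary of Lemma~\ref{l-Inversionformula}, and your argument (linearity of $\chi$ for the forward direction, coefficient recovery via the abelian inversion formula for the converse) is precisely that deduction, spelled out.
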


\section{A DSRG cannot be a Cayley graph \texorpdfstring{$\mathcal{C}(G,S)$ with $\overline{S}\in \bm{\mathcal{Z}}(\mathbb{Z}[G])$}{Lg}}
 L.\;K.\;J{\o}rgensen \cite{J1} proved that a DSRG cannot be a Cayley graph $\mathcal{C}(G,S)$ of an abelian group.\;This section gives a generalization of this result.\;We show that a DSRG cannot be a Cayley graph $\mathcal{C}(G,S)$,\;where $S$ is a union of some conjugate classes of $G$.\;

At first,\;we give a lemma about the group algebra $\mathbb{C}[G]$ and the center $\bm{\mathcal{Z}}(\mathbb{C}[G])$.\;Let $e$ be the identity element of $G$.\;It is clear that $e$ is the unity of the group algebra $\mathbb{C}[G]$.\;
\begin{lem}\label{l-Structure}Let $G$ be a finite group and $\mathbf{Irr}(G)=\{\chi_1,\cdots,\chi_r\}$ be the set of all the irreducible $\mathbb{C}$-characters of $G$.\;Define
\[e_i=\frac{\chi_i(1)}{|G|}\sum_{g\in G}\overline{\chi_i{(g)}}g\]
for $1\leqslant i\leqslant r$.\;Then\\
(1)The $r$ elements $e_1,e_2,\cdots,e_r$ form a complete family of primitive central idempotents with $e_1+e_2+\cdots+e_r=e$.\;The group algebra $\mathbb{C}[G]$ can be decomposited into the direct sum of minimal two-sided ideals (also $(\mathbb{C}[G],\mathbb{C}[G])$-bimodules),\;that is
\[\mathbb{C}[G]=\mathbb{C}[G]e_1\oplus\mathbb{C}[G]e_2\cdots\oplus\mathbb{C}[G]e_r.\]
(2)The $r$ elements $\{e_1,e_2,\cdots,e_r\}$ form a $\mathbb{C}$-basis of $\bm{\mathcal{Z}}(\mathbb{C}[G])$.\;Then,\;as a vector space,\;we have
\[\bm{\mathcal{Z}}(\mathbb{C}[G])=\mathbb{C}e_1\oplus\mathbb{C}e_2\cdots\oplus\mathbb{C}e_r.\]
\end{lem}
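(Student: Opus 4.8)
The plan is to verify the three defining properties of the elements $e_1,\dots,e_r$ — that they are central, orthogonal idempotents summing to $e$ — and then to read off the structural statements (1) and (2) from these together with the orthogonality relations of Lemma \ref{l-irreduciblecharacters} and the Wedderburn structure of $\mathbb{C}[G]$. First I would dispose of centrality: since each coefficient function $g\mapsto\overline{\chi_i(g)}$ is constant on conjugacy classes (characters are class functions), and an element $\sum_g a_g g\in\mathbb{C}[G]$ lies in $\bm{\mathcal{Z}}(\mathbb{C}[G])$ precisely when $(a_g)$ is a class function, each $e_i$ is automatically central.

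The engine of the whole argument is the projection formula $\rho_j(e_i)=\delta_{ij}\mathbf{1}_{V_j}$ for every irreducible representation $(V_j,\rho_j)$. To prove it I would note that $\rho_j(e_i)=\frac{n_i}{|G|}\sum_{g}\overline{\chi_i(g)}\,\rho_j(g)$ commutes with $\rho_j(h)$ for all $h$, because $e_i$ is central; Schur's lemma then forces $\rho_j(e_i)=\lambda\mathbf{1}_{V_j}$ for a scalar $\lambda$. Taking traces gives $\lambda n_j=\frac{n_i}{|G|}\sum_g\overline{\chi_i(g)}\chi_j(g)$, and the first orthogonality relation (Lemma \ref{l-irreduciblecharacters}(3)) evaluates the right-hand side to $n_i\delta_{ij}$, whence $\lambda=\delta_{ij}$.

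With the projection formula in hand, part (1) falls out of the Wedderburn isomorphism $a\mapsto(\rho_j(a))_j$, which identifies $\mathbb{C}[G]$ with $\bigoplus_j M_{n_j}(\mathbb{C})$ (and which I would cite from \cite{JL}, resting on the semisimplicity of $\mathbb{C}[G]$). Since this map is injective, elements are determined by their images under the $\rho_j$: from $\rho_j(e_ie_k)=\rho_j(e_i)\rho_j(e_k)=\delta_{ij}\delta_{jk}\mathbf{1}=\delta_{ik}\rho_j(e_i)$ I get $e_ie_k=\delta_{ik}e_i$, so the $e_i$ are orthogonal idempotents; and from $\rho_j\bigl(\sum_i e_i\bigr)=\sum_i\delta_{ij}\mathbf{1}=\mathbf{1}=\rho_j(e)$ I get $\sum_i e_i=e$. (Alternatively, completeness follows by computing the coefficient of $g$ in $\sum_i e_i$ as $\frac{1}{|G|}\sum_i\chi_i(1)\overline{\chi_i(g)}$ and applying the second orthogonality relation, Lemma \ref{l-irreduciblecharacters}(4).) Consequently $\mathbb{C}[G]=\bigoplus_i\mathbb{C}[G]e_i$ as two-sided ideals, and since $e_i$ maps to the identity of the $i$-th Wedderburn block and to $0$ elsewhere, $\mathbb{C}[G]e_i\cong M_{n_i}(\mathbb{C})$ is simple; hence each $\mathbb{C}[G]e_i$ is a minimal two-sided ideal and each $e_i$ is a primitive central idempotent.

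Part (2) is then a dimension count. The orthogonality $e_ie_k=\delta_{ik}e_i$ makes $e_1,\dots,e_r$ linearly independent (multiply any relation $\sum c_i e_i=0$ by $e_j$), and they lie in $\bm{\mathcal{Z}}(\mathbb{C}[G])$, whose dimension equals the class number $r$ because the conjugacy-class sums $\overline{C_1},\dots,\overline{C_r}$ form a basis; by Lemma \ref{l-irreduciblecharacters}(1) this $r$ is exactly the number of irreducible characters, so $\{e_1,\dots,e_r\}$ is a basis and $\bm{\mathcal{Z}}(\mathbb{C}[G])=\bigoplus_i\mathbb{C}e_i$. The main obstacle is the projection formula $\rho_j(e_i)=\delta_{ij}\mathbf{1}_{V_j}$ together with the step that upgrades it to orthogonal idempotency and the block decomposition: this is where the full force of Schur orthogonality and the Wedderburn structure theorem must be brought to bear, and the simplicity of the blocks $\mathbb{C}[G]e_i$ — needed for minimality and primitivity — is the point I would lean on the cited structure theory for rather than reprove.
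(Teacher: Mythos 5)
Your proof is correct: the centrality-via-class-functions observation, the projection formula $\rho_j(e_i)=\delta_{ij}\mathbf{1}_{V_j}$ obtained from Schur's lemma plus the first orthogonality relation, and the passage through the Wedderburn decomposition to orthogonal primitive central idempotents and the two bases of $\bm{\mathcal{Z}}(\mathbb{C}[G])$ together constitute the standard and complete argument. The paper itself offers no proof of this lemma, treating it as known structure theory from the cited references \cite{JL} and \cite{JP}, and your argument is precisely the canonical one found there, so there is no divergence to report.
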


The \emph{spectrum} of a graph $\Gamma$ is the set of eigenvalues of $\mathbf{A}(\Gamma)$,\;i.e.,\;the adjacency matrix of $\Gamma$,\;together with their multiplicities.\;The spectrum of a graph is an important algebraic invariant.\;In general,\;we still cannot obtain a simple and explicit formula of eigenvalues of a Cayley graph $\mathcal{C}(G,S)$.\;We now list some known  results about the spectrum of Cayley graph $\mathcal{C}(G,S)$.\;

When $G$ is an abelian group,\;we have:
\begin{thm}({\cite{ST}},\;Theorem 5.4.10) Let $G$ be an abelian group,\;$S$ be a subset of $G\setminus\{e\}$,\;and $\mathbf{Irr}(G)=\{\chi_1,\chi_2,\cdots,\chi_n\}$ be the set of irreducible characters of $G$.\;Then the eigenvalues of the adjacency matrix  $\mathbf{A}(\mathcal{C}(G,S))$ are
\begin{equation*}
\lambda_i=\sum_{s\in S}\chi_i(s)
\end{equation*}
for any $1\leqslant i\leqslant n$.\;
\end{thm}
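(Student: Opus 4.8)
The plan is to exhibit, for each irreducible character $\chi_i$, an explicit eigenvector of $\mathbf{A}(\mathcal{C}(G,S))$ built directly from $\chi_i$, and then to show that these eigenvectors span. Since $G$ is abelian, every conjugacy class is a singleton, so by Lemma~\ref{l-irreduciblecharacters}(1) there are exactly $r=|G|=n$ irreducible characters, and each irreducible representation is one-dimensional; hence every $\chi_i$ is a group homomorphism $\chi_i:G\to\mathbb{C}^\times$ satisfying $\chi_i(gh)=\chi_i(g)\chi_i(h)$. This multiplicativity is the structural fact that will drive the whole computation.

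First I would index the rows and columns of $A:=\mathbf{A}(\mathcal{C}(G,S))$ by the elements of $G$, so that by Definition~\ref{d-Cayleydigraph} we have $A_{x,y}=1$ precisely when $yx^{-1}\in S$ and $A_{x,y}=0$ otherwise. For each $i$ I define a column vector $v_i\in\mathbb{C}^{G}$ with entries $(v_i)_g=\chi_i(g)$. I then compute the $x$-entry of $Av_i$ by summing over the out-neighbours of $x$: writing each neighbour as $y=sx$ with $s=yx^{-1}\in S$, the sum becomes
\begin{equation*}
(Av_i)_x=\sum_{s\in S}\chi_i(sx)=\Big(\sum_{s\in S}\chi_i(s)\Big)\chi_i(x)=\lambda_i\,(v_i)_x,
\end{equation*}
where the multiplicativity of $\chi_i$ is exactly what lets the factor $\chi_i(x)$ be pulled out of the sum. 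Thus $v_i$ is an eigenvector of $A$ with eigenvalue $\lambda_i=\sum_{s\in S}\chi_i(s)$.

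It then remains to check that these $n$ eigenvectors account for the entire spectrum, i.e.\ that $\{v_1,\dots,v_n\}$ is a basis of $\mathbb{C}^{G}$. For this I would invoke the first orthogonality relation of Lemma~\ref{l-irreduciblecharacters}(3): since each class has size $1$ in the abelian case, it reduces to $\langle v_i,v_j\rangle=\sum_{g\in G}\chi_i(g)\overline{\chi_j(g)}=|G|\,\delta_{ij}$, which shows the $v_i$ are pairwise orthogonal and nonzero, hence linearly independent. As there are $n=\dim\mathbb{C}^{G}$ of them, they form an orthogonal eigenbasis and $A$ is diagonalized with the eigenvalues $\lambda_1,\dots,\lambda_n$, as claimed.

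The computation itself is routine; the only point that needs care is the bookkeeping in the reindexing $y\mapsto s=yx^{-1}$ together with the orientation convention of Definition~\ref{d-Cayleydigraph}, and the observation that one-dimensionality of the irreducible representations of an abelian group is precisely what makes each $\chi_i$ multiplicative. This is the single place where abelianness is essential, and it is the crux of why the eigenvalue formula takes its clean product form. Equivalently, one could phrase the entire argument through the regular representation $\rho_{reg}$: the matrix $A$ is the image of $\overline{S}$ under $\rho_{reg}$ (up to the orientation convention), and $\mathbb{C}[G]$ splits as a direct sum of $n$ one-dimensional $G$-submodules on which $\overline{S}$ acts by the scalars $\lambda_i$, recovering the same spectrum.
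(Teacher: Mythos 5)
Your proof is correct, and it is worth noting how it sits relative to the paper: the paper never proves this statement itself (it is quoted from \cite{ST}), but it does prove a strict generalization, Theorem \ref{l-eigenvalues}, covering any Cayley graph $\mathcal{C}(G,S)$ with $\overline{S}\in \bm{\mathcal{Z}}(\mathbb{Z}[G])$, and the two arguments are genuinely different. The paper works inside the group algebra: it expands $\overline{S}=\sum_j \lambda_j e_j$ in the basis of primitive central idempotents, solves for the coefficients $\lambda_j$ by applying the matrix-form orthogonality relations (\ref{CharacterTable}) to the resulting linear system, and then reads off the spectrum from the fact that $\rho_{reg}(\overline{S})$ acts as the scalar $\lambda_i$ on each minimal two-sided ideal $\mathbb{C}[G]e_i$, of dimension $n_i^2$. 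Your argument instead exhibits explicit eigenvectors $v_i=(\chi_i(g))_{g\in G}$ and verifies the eigenvalue equation by hand, using multiplicativity of characters --- which, as you correctly isolate, is exactly where abelianness enters (all irreducibles are one-dimensional) --- and then gets completeness from the first orthogonality relation. Your route is more elementary and self-contained, requiring no idempotent machinery, but it does not extend beyond the abelian case: for non-abelian $G$ the character vector $(\chi_i(g))_g$ is constant on conjugacy classes and fails to be an eigenvector of $\mathbf{A}(\mathcal{C}(G,S))$ in general, which is precisely why the paper's central-idempotent decomposition is needed for its Theorem \ref{l-eigenvalues}. Your closing remark about $\mathbb{C}[G]$ splitting into one-dimensional $G$-submodules on which $\overline{S}$ acts by the scalars $\lambda_i$ is in fact the specialization of the paper's mechanism to the abelian case, so you have effectively recovered both proofs; the eigenvector computation is the cleaner one here, and it is gap-free.
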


In \cite{L},\;L.\;Babai derived an expression for the spectrum of the Cayley graph $\mathcal{C}(G,S)$ in terms of irreducible characters of the group $G$.\;Let $\mathbf{Irr}(G)=\{\chi_1,\chi_2,\cdots,\chi_r\}$ be the set of irreducible characters of $G$,\;and
$n_1,n_2,\cdots,n_r$ are degrees of $\chi_1,\cdots,\chi_r$ respectively.\;
\begin{thm}(\cite{L})Let $\mathcal{C}(G,S)$ be a Cayley graph.\;Then
\begin{equation*}
\lambda_{i,1}^t+\lambda_{i,2}^t+\cdots+\lambda_{i,n_i}^t=\sum_{g_1,\cdots,g_t\in S}\chi_i(g_1\cdots g_t)
\end{equation*}
for any positive integer $t$ and $1\leqslant i\leqslant r$,\;where $\lambda_{i,1},\lambda_{i,2},\cdots,\lambda_{i,n_i}$ are eigenvalues of $\mathcal{C}(G,S)$,\;of the same multiplicities $n_i$.\;
\end{thm}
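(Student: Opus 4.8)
The plan is to realize the adjacency matrix of $\mathcal{C}(G,S)$ as the image of the group-algebra element $\overline{S}$ under the left regular representation, and then to exploit the block decomposition of $\rho_{reg}$ into irreducibles. First I would check that, with the convention $x\to y \iff yx^{-1}\in S$ of Definition \ref{d-Cayleydigraph}, the matrix of $\rho_{reg}(\overline{S})$ relative to the basis $G$ of $\mathbb{C}[G]$ agrees with $\mathbf{A}(\mathcal{C}(G,S))$: indeed $\rho_{reg}(s)$ has a $1$ in position $(y,x)$ precisely when $y=sx$, i.e. $yx^{-1}=s$, so summing over $s\in S$ reproduces the adjacency pattern (any discrepancy between the two indexing conventions is merely a transpose, which does not change the spectrum). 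Consequently the spectrum of the Cayley graph coincides, with multiplicities, with the spectrum of the operator $\rho_{reg}(\overline{S})$.

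Next I would decompose $\rho_{reg}$. Using the complete family of primitive central idempotents $e_1,\dots,e_r$ of Lemma \ref{l-Structure}, we have $\mathbb{C}[G]=\bigoplus_{i=1}^r \mathbb{C}[G]e_i$ as a direct sum of minimal two-sided ideals, and each $\mathbb{C}[G]e_i\cong M_{n_i}(\mathbb{C})$ carries, as a left module, exactly $n_i$ copies of the irreducible representation $\rho_i$ of degree $n_i$; hence $\rho_{reg}\cong\bigoplus_{i=1}^r n_i\rho_i$. Because $\overline{S}$ is a fixed algebra element, $\rho_{reg}(\overline{S})$ is block-diagonal with the block $\rho_i(\overline{S})$ repeated $n_i$ times. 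Writing $\lambda_{i,1},\dots,\lambda_{i,n_i}$ for the eigenvalues (counted with algebraic multiplicity) of the $n_i\times n_i$ matrix $\rho_i(\overline{S})$, this is exactly the asserted labeling, and each $\lambda_{i,j}$ occurs with multiplicity $n_i$ in the full spectrum.

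Then I would prove the power-sum identity by taking traces blockwise. Since $\rho_i$ is an algebra homomorphism, $\rho_i(\overline{S})^t=\rho_i(\overline{S}^{\,t})$, and expanding in the group algebra gives $\overline{S}^{\,t}=\sum_{g_1,\dots,g_t\in S} g_1\cdots g_t$. Applying $\rho_i$, taking traces, and using $\chi_i=\mathrm{tr}\circ\rho_i$ yields $\mathrm{tr}\bigl(\rho_i(\overline{S})^t\bigr)=\sum_{g_1,\dots,g_t\in S}\chi_i(g_1\cdots g_t)$. On the other hand, for any complex square matrix the trace of its $t$-th power equals the sum of the $t$-th powers of its eigenvalues counted with algebraic multiplicity, so $\mathrm{tr}\bigl(\rho_i(\overline{S})^t\bigr)=\sum_{j=1}^{n_i}\lambda_{i,j}^t$. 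Equating the two expressions gives the claimed identity for every $t$ and every $i$.

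The substantive ingredient here is not an obstacle so much as the structural input $\rho_{reg}\cong\bigoplus_i n_i\rho_i$ with the correct multiplicities, which I extract from the Wedderburn decomposition recorded in Lemma \ref{l-Structure}; everything else is formal manipulation in $\mathbb{C}[G]$. The one subtlety worth flagging is that $\mathbf{A}(\mathcal{C}(G,S))$, and hence each $\rho_i(\overline{S})$, need not be normal for a genuinely directed graph, so $\rho_i(\overline{S})$ may fail to be diagonalizable; this causes no trouble precisely because the argument works throughout with traces of matrix powers, which compute power sums over algebraic multiplicities (via the Jordan form) without requiring an eigenbasis.
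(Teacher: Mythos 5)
Your proof is correct, but there is nothing in the paper to compare it against: the statement is quoted from Babai \cite{L} as a known result and the paper supplies no proof of it. Your route is the standard one and is exactly the natural generalization of the paper's own proof of Theorem \ref{l-eigenvalues}: there the extra hypothesis $\overline{S}\in \bm{\mathcal{Z}}(\mathbb{Z}[G])$ makes $\rho_{reg}(\overline{S})$ act as the scalar $\lambda_i$ on each block $\mathbb{C}[G]e_i$, so the eigenvalues and multiplicities $n_i^2$ can be read off directly, whereas for general $S$ the block $\rho_i(\overline{S})$ is an arbitrary $n_i\times n_i$ matrix and one can only extract the power sums of its eigenvalues via traces, which is precisely what you do. Two small points of bookkeeping. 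First, Lemma \ref{l-Structure} as recorded in the paper gives only the decomposition of $\mathbb{C}[G]$ into minimal two-sided ideals and a basis of the center; the multiplicity statement you actually need, namely that $\mathbb{C}[G]e_i\cong M_{n_i}(\mathbb{C})$ carries the degree-$n_i$ irreducible with left-module multiplicity $n_i$ (equivalently $\rho_{reg}\cong\bigoplus_{i=1}^r n_i\rho_i$), is a standard Wedderburn supplement available in the paper's references \cite{JL,JP}, so you should cite it there rather than attribute it to the lemma itself. Second, your handling of the two genuine subtleties is right: the discrepancy between the paper's adjacency conventions and $\rho_{reg}(\overline{S})$ is at worst a transpose, which is spectrum-preserving, and since $\rho_i(\overline{S})$ need not be normal (hence need not be diagonalizable) for a directed graph, working with $\mathrm{tr}\bigl(\rho_i(\overline{S})^t\bigr)=\sum_{j=1}^{n_i}\lambda_{i,j}^t$, which computes power sums over algebraic multiplicities via triangularization, is what makes the argument airtight; a proof that implicitly assumed an eigenbasis of $\rho_i(\overline{S})$ would have a genuine gap.
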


\subsection{The spectra of Cayley graphs \texorpdfstring{$\mathcal{C}(G,S)$ with $\overline{S}\in \bm{\mathcal{Z}}(\mathbb{Z}[G])$}{Lg}}
Let $\rho_{reg}$ be the left regular representation of $G$.\;If $G=\{g_1,g_2,\cdots,g_{|G|}\}$ were
chosen as a basis of the group algebra $\mathbb{C}[G]$,\;then the matrix $\rho_{reg,G}(\overline{S})$ is an adjacent matrix of Cayley graph $\mathcal{C}(G,S)$,\;for any $S\subseteq G\setminus\{e_G\}$.

Let $C_1,C_2,\cdots,C_r$ be the conjugacy classes of the group $G$,\;then $\overline{C_1},\overline{C_2},\cdots,\overline{C_r}$ are called the \emph{conjugacy class sum} of $G$.\;It is clear that $\{\overline{C_1},\overline{C_2},\cdots,\overline{C_r}\}$ is a $\mathbb{C}$-basis of $\bm{\mathcal{Z}}(\mathbb{C}[G])$,\;i.e.,\;
\begin{equation*}
\bm{\mathcal{Z}}(\mathbb{C}[G])=\mathbb{C}\overline{C_1}\oplus\mathbb{C}\overline{C_2}\cdots\oplus\mathbb{C}\overline{C_r}.
\end{equation*}
This shows that a subset $S$ is a union of some conjugate classes of $G$ if and only if $\overline{S}\in \bm{\mathcal{Z}}(\mathbb{C}[G])\cap \mathbb{Z}[G]=\bm{\mathcal{Z}}(\mathbb{Z}[G])$.\;From Lemma \ref{l-Structure},\;$\{e_1,e_2,\cdots,e_r\}$ is another basis of $\bm{\mathcal{Z}}(\mathbb{C}[G])$,\;i.e.,\;
\begin{equation*}
\bm{\mathcal{Z}}(\mathbb{C}[G])=\mathbb{C}{e_1}\oplus\mathbb{C}{e_2}\cdots\oplus\mathbb{C}{e_r}
\end{equation*}
where
\begin{equation}\label{5.1.1}
e_j=\frac{\chi_j(1)}{|G|}\sum_{g\in G}\overline{\chi_j{(g)}}g
\end{equation}
for $1\leqslant j\leqslant r$.\;The following lemma gives the spectrum of the Cayley graph $\mathcal{C}(G,S)$,\;where $S$ is a union of some conjugate classes of $G$.\;
\begin{thm}\label{l-eigenvalues}Let $\mathcal{C}(G,S)$ be a Cayley graph with $\overline{S}\in \bm{\mathcal{Z}}(\mathbb{Z}[G])$ and  $\mathbf{Irr}(G)=\{\chi_1,\cdots,\chi_r\}$ be the set of irreducible characters of $G$.\;Then the eigenvalues of such Cayley graph are $\lambda_1,\cdots,\lambda_r$ of multiplicities $n_1^2,n_2^2,\cdots,n_r^2$ respectively,\;where
\begin{equation*}
\lambda_i=\frac{1}{n_i}\sum_{s\in S}\chi_i(s)
\end{equation*}
for $1\leqslant i\leqslant r$,\;and $n_1,n_2,\cdots,n_r$ are degrees of $\chi_1,\cdots,\chi_r$ respectively.\;
\end{thm}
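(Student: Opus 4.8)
The plan is to identify the adjacency matrix with the operator of left multiplication by $\overline{S}$ and then exploit the fact that, since $S$ is a union of conjugacy classes, $\overline{S}$ lies in the center of $\mathbb{C}[G]$ and therefore acts as a scalar on each irreducible constituent of the regular representation.

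First I would recall, as observed immediately before the statement, that choosing $G=\{g_1,\dots,g_{|G|}\}$ as an ordered basis of $\mathbb{C}[G]$ makes $\rho_{reg,G}(\overline{S})$ an adjacency matrix of $\mathcal{C}(G,S)$; hence its eigenvalues, counted with multiplicity, are exactly the spectrum we must compute. Next I would invoke the decomposition of the regular representation into irreducibles. By Lemma \ref{l-Structure}(1) we have $\mathbb{C}[G]=\bigoplus_{i=1}^r\mathbb{C}[G]e_i$, and each minimal two-sided ideal $\mathbb{C}[G]e_i$ has $\dim_{\mathbb{C}}\mathbb{C}[G]e_i=n_i^2$ and decomposes, as a left module, into $n_i$ copies of the irreducible representation $(V_i,\rho_i)$ affording the character $\chi_i$. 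Consequently $\rho_{reg}\cong\bigoplus_{i=1}^r n_i\rho_i$, so the $\rho_i$-isotypic component of $\mathbb{C}[G]$ has dimension $n_i\cdot n_i=n_i^2$.

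The heart of the argument is Schur's Lemma. Because $\overline{S}\in\bm{\mathcal{Z}}(\mathbb{Z}[G])\subseteq\bm{\mathcal{Z}}(\mathbb{C}[G])$, for each $i$ the operator $\rho_i(\overline{S})$ commutes with $\rho_i(g)$ for every $g\in G$, so it is a $G$-module endomorphism of the irreducible module $V_i$; over the algebraically closed field $\mathbb{C}$ this forces $\rho_i(\overline{S})=\lambda_i\mathbf{1}_{V_i}$ for a scalar $\lambda_i$. Taking traces and using $\overline{S}=\sum_{s\in S}s$ gives $n_i\lambda_i=\mathrm{tr}\,\rho_i(\overline{S})=\chi_i(\overline{S})=\sum_{s\in S}\chi_i(s)$, whence $\lambda_i=\frac{1}{n_i}\sum_{s\in S}\chi_i(s)$, matching the claimed formula. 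Since $\rho_{reg,G}(\overline{S})$ acts as the single scalar $\lambda_i$ on the whole $\rho_i$-isotypic component of dimension $n_i^2$, the value $\lambda_i$ occurs as an eigenvalue with multiplicity $n_i^2$; the check $\sum_{i=1}^r n_i^2=|G|=n$ confirms that the entire spectrum has been accounted for.

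I expect the delicate points to be bookkeeping rather than conceptual. One must be careful that the multiplicity of $\rho_i$ in the \emph{regular} representation is $n_i$ (not $1$), so that the scalar $\lambda_i$ is repeated $n_i\cdot\dim V_i=n_i^2$ times; and one must keep the two bases $\{\overline{C_1},\dots,\overline{C_r}\}$ and $\{e_1,\dots,e_r\}$ of $\bm{\mathcal{Z}}(\mathbb{C}[G])$ straight when translating between ``$S$ is a union of conjugacy classes'' and the idempotent decomposition of Lemma \ref{l-Structure}. The Schur's Lemma step itself is immediate once centrality of $\overline{S}$ is noted, so no genuine obstacle should arise.
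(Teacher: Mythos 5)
Your proof is correct, and it shares the paper's overall skeleton: both identify the adjacency matrix with $\rho_{reg}(\overline{S})$, both invoke the decomposition $\mathbb{C}[G]=\bigoplus_{i=1}^r\mathbb{C}[G]e_i$ of Lemma \ref{l-Structure} into minimal two-sided ideals of dimension $n_i^2$, and both conclude by showing $\overline{S}$ acts as the scalar $\lambda_i$ on the $i$-th summand. Where you genuinely diverge is in how the scalar is computed. The paper never uses Schur's Lemma: it expands $\overline{S}=\sum_j\lambda_je_j$ in the idempotent basis, substitutes the explicit formula $e_j=\frac{\chi_j(1)}{|G|}\sum_{g\in G}\overline{\chi_j(g)}g$, compares coefficients of group elements to get the linear system $\overline{\mathcal{X}}^{T}D_2(\lambda_1,\dots,\lambda_r)^{T}=|G|\,(\Delta_S(g_1),\dots,\Delta_S(g_r))^{T}$, and inverts it with the orthogonality relations in matrix form (Lemma \ref{l-irreduciblecharacters}(5)); the multiplicity count then needs only the centrality relation $e_je_i=\delta_{ij}e_i$, not any module-theoretic input. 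You instead apply Schur's Lemma to each irreducible constituent $(V_i,\rho_i)$ of the regular representation --- centrality of $\overline{S}$ makes $\rho_i(\overline{S})$ a $G$-endomorphism, hence a scalar --- and recover $\lambda_i$ by taking traces, $n_i\lambda_i=\chi_i(\overline{S})$. Your route is shorter and more conceptual, and it bypasses the character-table computation entirely; its price is that you must know the Wedderburn-type fact that $\mathbb{C}[G]e_i$ is the isotypic component consisting of $n_i$ copies of $V_i$, which the paper's argument never needs. Both arguments yield the same scalar action on each $\mathbb{C}[G]e_i$, so the multiplicities $n_i^2$ come out identically, and your closing check $\sum_{i=1}^r n_i^2=|G|$ is a sound confirmation that the whole spectrum is accounted for.
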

\begin{proof}From Lemma \ref{l-Structure}(2),\;assuming $$\overline{S}=\sum\limits_{j=1}^r\lambda_je_j$$ for some complex numbers $\lambda_1,\lambda_2,\cdots,\lambda_r$.\;Therefore
\[\rho_{reg}(\overline{S})=\sum_{j=1}^r\lambda_j \rho_{reg}(e_j).\]
Then we have
\begin{equation*}
\begin{aligned}
\overline{S}&=\sum_{j=1}^r\lambda_je_j=\frac{1}{|G|}\sum_{j=1}^r\lambda_jn_j\sum_{g\in G}\overline{\chi_j{(g)}}g\\
&=\sum_{g\in G}\left(\frac{1}{|G|}\sum_{j=1}^r\lambda_jn_j\overline{\chi_j{(g)}}\right)g=\sum_{g\in G}\Delta_S(g)g
\end{aligned}
\end{equation*}
from the equation (\ref{5.1.1}).\;This gives that $(\lambda_1,\;\cdots,\;\lambda_r)^T$ satisfies the linear equation
\begin{equation*}
\overline{\mathcal{X}}^{T}D_2\left(\begin{array}{c}
\lambda_1\\
\lambda_2\\
\vdots\\
\lambda_r\\
\end{array}\right)=|G|\left(\begin{array}{c}
\Delta_S(g_1)\\
\Delta_S(g_2)\\
\vdots\\
\Delta_S(g_r)\\
\end{array}\right),
\end{equation*}
where $g_1,\;g_2,\;\cdots,\;g_r$ are the representatives of the $r$ conjugacy classes.\;Multiplying $\mathcal{X}D_1$ on both sides of the above equation,\;we can get
\begin{equation*}
\mathcal{X}D_1\overline{\mathcal{X}}^{T}D_2\left(\begin{array}{c}
\lambda_1\\
\lambda_2\\
\vdots\\
\lambda_r\\
\end{array}\right)=|G|D_2\left(\begin{array}{c}
\lambda_1\\
\lambda_2\\
\vdots\\
\lambda_r\\
\end{array}\right)=|G|\mathcal{X}D_1\left(\begin{array}{c}
\Delta_S(g_1)\\
\Delta_S(g_2)\\
\vdots\\
\Delta_S(g_r)\\
\end{array}\right)
\end{equation*}
by Lemma \ref{l-irreduciblecharacters}\;(5).\;Then
\begin{equation*}
\left(\begin{array}{c}
\lambda_1\\
\lambda_2\\
\vdots\\
\lambda_r\\
\end{array}\right)=D_2^{-1}\mathcal{X}D_1\left(\begin{array}{c}
\Delta_S(g_1)\\
\Delta_S(g_2)\\
\vdots\\
\Delta_S(g_r)\\
\end{array}\right).
\end{equation*}
Hence,\;we have
\begin{equation*}
\lambda_i=\frac{1}{n_i}\sum_{j=1}^r|C_j|\Delta_S(g_j)\chi_i(g_j)=\frac{1}{n_i}\sum_{s\in S}\chi_i(s)
\end{equation*}
for each $1\leqslant i\leqslant r$.\;It follows from Lemma \ref{l-Structure} that
\begin{equation*}
\mathbb{C}[G]=\mathbb{C}[G]e_1\oplus\mathbb{C}[G]e_2\cdots\oplus\mathbb{C}[G]e_r
\end{equation*}
is a decomposition of the group algebra into minimal two-sided ideals (also $(\mathbb{C}[G],\mathbb{C}[G])$-bimodules).\;For each $1\leqslant i\leqslant r$,\;the $\rho_{reg}(\overline{S})$ restricts to the submodule $\mathbb{C}[G]e_i$ is
\begin{equation*}
\begin{aligned}
\rho_{reg}(\overline{S})\Big|_{\mathbb{C}[G]e_i}&=\sum_{j=1}^r\left(\lambda_j \rho_{reg}(e_j)\Big|_{\mathbb{C}[G]e_i}\right)\\
&=\lambda_i\rho_{reg}(e_i)\Big|_{\mathbb{C}[G]e_i}\\
&=\lambda_i\mathbf{1}_{\mathbb{C}[G]e_i}\Big|_{\mathbb{C}[G]e_i},
\end{aligned}
\end{equation*}
this is a scalar multiplication.\;Thus the eigenvalues of $\rho_{reg}(\overline{S})\Big|_{\mathbb{C}[G]e_i}$ are $\lambda_i$ with multiplicity $n_i^2=\dim\mathbb{C}[G]e_i$.\;Then the result follows.\;
\end{proof}

\subsection{The proof of the main theorem}
\begin{lem}(see \cite{K1},\;Lemma 3.\;2)\label{l-withoutundirected}
Let $H$ be a regular non-empty directed graph without undirected edges,\;then $A=\mathbf{A}(H)$ has at least one non-real eigenvalue.\;
\end{lem}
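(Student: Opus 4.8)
The plan is to argue by contradiction: I would extract a single arithmetic identity on the eigenvalues from the two combinatorial hypotheses, and then clash it against regularity.

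First I would recast the hypotheses in matrix language. Set $A=\mathbf{A}(H)$. Having no loops makes the diagonal of $A$ zero, and having no undirected edges means there is no pair $x\neq y$ with both $x\rightarrow y$ and $y\rightarrow x$; equivalently, $A_{xy}$ and $A_{yx}$ are never simultaneously nonzero when $x\neq y$. Together these amount to saying that the entrywise (Hadamard) product $A\circ A^{T}$ is the zero matrix.

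The crux is then to evaluate $\mathrm{tr}(A^{2})$. Since $(A^{2})_{ii}=\sum_{j}A_{ij}A_{ji}$, one has $\mathrm{tr}(A^{2})=\sum_{i,j}A_{ij}A_{ji}$, and each summand is an entry of $A\circ A^{T}$: the diagonal contributions $A_{ii}^{2}$ vanish for lack of loops, and the off-diagonal contributions $A_{ij}A_{ji}$ vanish for lack of undirected edges. Hence $\mathrm{tr}(A^{2})=0$.

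Finally I would suppose, for contradiction, that every eigenvalue of $A$ is real. Listing the eigenvalues $\lambda_{1},\dots,\lambda_{n}$ with algebraic multiplicity, the relation $\mathrm{tr}(A^{2})=\sum_{i}\lambda_{i}^{2}=0$ together with $\lambda_{i}\in\mathbb{R}$ forces $\lambda_{i}=0$ for every $i$, so $A$ is nilpotent. But $H$ is $k$-regular and non-empty, so $k\geqslant 1$ and $A\mathbf{1}=k\mathbf{1}$ exhibits $k>0$ as an eigenvalue, contradicting nilpotency. Thus $A$ must have at least one non-real eigenvalue. I do not expect a genuine obstacle here; the only delicate points are the faithful encoding of the two hypotheses as $A\circ A^{T}=0$, which is precisely what annihilates $\mathrm{tr}(A^{2})$, and the observation that non-emptiness is exactly what supplies the strictly positive eigenvalue needed to close the contradiction.
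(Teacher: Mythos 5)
Your proof is correct: the identity $\mathrm{tr}(A^{2})=\sum_{i,j}A_{ij}A_{ji}=0$ (from the zero diagonal and the absence of undirected edges), combined with regularity forcing the real eigenvalue $k\geqslant 1$, yields the contradiction cleanly. The paper itself gives no proof but cites Lemma 3.2 of \cite{K1}, and your trace argument is essentially the same standard argument used there, so nothing further is needed.
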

\begin{thm}\label{t-conjugacy}A DSRG cannot be a Cayley graph $\mathcal{C}(G,S)$ with $\overline{S}\in \bm{\mathcal{Z}}(\mathbb{Z}[G])$.\;
\end{thm}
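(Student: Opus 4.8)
The plan is to exploit the hypothesis $\overline{S}\in\bm{\mathcal{Z}}(\mathbb{Z}[G])$ to show that the adjacency matrix is a \emph{normal} matrix, and then to play this off against the fact (Proposition \ref{p-2}) that a DSRG has only real eigenvalues. The two together will force the matrix to be symmetric, which is incompatible with the standing assumption $0<t<k$.

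First I would identify the adjacency matrix of $\mathcal{C}(G,S)$ with $A=\rho_{reg}(\overline{S})$, exactly as recorded just before Theorem \ref{l-eigenvalues}. Transposing $A$ reverses every arc $x\to y$ to $y\to x$, so $A^{T}$ is the adjacency matrix of $\mathcal{C}(G,S^{(-1)})$, that is $A^{T}=\rho_{reg}(\overline{S^{(-1)}})$. Since $\overline{S}$ lies in the center $\bm{\mathcal{Z}}(\mathbb{C}[G])$, it commutes with $\overline{S^{(-1)}}$ in the group algebra, and because $\rho_{reg}$ is an algebra homomorphism this yields $A A^{T}=\rho_{reg}\!\left(\overline{S}\,\overline{S^{(-1)}}\right)=\rho_{reg}\!\left(\overline{S^{(-1)}}\,\overline{S}\right)=A^{T}A$. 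Hence $A$ is a real normal matrix; note that only the centrality of $\overline{S}$ itself is needed here.

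Second, I would suppose for contradiction that $\mathcal{C}(G,S)$ is a DSRG, necessarily with $0<t<k$. By Proposition \ref{p-2} its adjacency matrix $A$ then has only the three real integer eigenvalues $k,\rho,\sigma$. A real normal matrix whose spectrum is entirely real must be symmetric: since $A$ is real we have $A^{T}=A^{*}$, and unitarily diagonalizing $A=UDU^{*}$ with $D$ real gives $A^{*}=UD^{*}U^{*}=UDU^{*}=A$, so $A^{T}=A$. As $\rho_{reg}$ is faithful, $A=A^{T}$ forces $\overline{S}=\overline{S^{(-1)}}$, i.e. $S=S^{(-1)}$, so the digraph is undirected. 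But in an undirected graph every arc lies on a $2$-cycle, which means $t=k$, contradicting $0<t<k$. Therefore no such Cayley graph is a DSRG.

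I expect the only delicate point to be the normality step: one must verify carefully that transposing the adjacency matrix corresponds precisely to replacing $\overline{S}$ by $\overline{S^{(-1)}}$, and that centrality of $\overline{S}$ alone suffices to make the two images commute; the linear-algebra fact that a real normal matrix with real spectrum is symmetric, and the final combinatorial contradiction, are then routine. As an alternative endgame one can instead invoke Lemma \ref{l-withoutundirected}: if $A\neq A^{T}$ then $S\setminus S^{(-1)}$ is a nonempty union of conjugacy classes, so $\mathcal{C}(G,S\setminus S^{(-1)})$ is a regular digraph without undirected edges and hence has a non-real eigenvalue; since its adjacency matrix commutes with the (symmetric) part coming from $S\cap S^{(-1)}$—both being $\rho_{reg}$-images of central elements—they are simultaneously diagonalizable, and that non-real value would also appear in the spectrum of $A$, again contradicting that a DSRG has real eigenvalues.
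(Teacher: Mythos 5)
Your proof is correct, but it takes a genuinely different route from the paper's. The paper first computes the entire spectrum of $\mathcal{C}(G,S)$ for central $\overline{S}$ via irreducible characters (Theorem \ref{l-eigenvalues}), then splits $S$ into the union $S_1$ of classes whose inverse classes lie in $S$ and the complementary union $S_2$, shows by a character-sum computation that $\mathcal{C}(G,S_1)$ has real eigenvalues, deduces that $\mathcal{C}(G,S_2)$ would also have an all-real spectrum, and contradicts Lemma \ref{l-withoutundirected}. Your main argument bypasses both of these ingredients: centrality of $\overline{S}$ alone makes $A=\rho_{reg}(\overline{S})$ commute with $A^{T}=\rho_{reg}(\overline{S^{(-1)}})$, hence $A$ is real normal; Proposition \ref{p-2} forces a real spectrum; the spectral theorem then forces $A=A^{T}$; and faithfulness of $\rho_{reg}$ gives $S=S^{(-1)}$, whence $t=k$, contradicting the standing convention $0<t<k$. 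What your route buys is economy and self-containedness (no character theory, no appeal to the cited lemma of Klin et al.), plus it handles explicitly the degenerate case $S=S^{(-1)}$, which the paper's proof treats only implicitly: the paper asserts that $\mathcal{C}(G,S_2)$ is non-empty, which fails precisely when $S$ is symmetric, and there the contradiction must indeed come from $t=k$ as you argue. What the paper's route buys is the eigenvalue formula of Theorem \ref{l-eigenvalues} itself, which is of independent interest and is reused, and a splitting technique that generalizes directly to Theorem \ref{t-normalizer} in Section 4. Note that your alternative endgame (splitting off $S\setminus S^{(-1)}$, using commutativity and simultaneous diagonalization, then invoking Lemma \ref{l-withoutundirected}) is essentially the paper's argument recast in matrix language, so only your main route is truly new. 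One small point to make explicit in a final write-up: the contradiction $t=k$ is a contradiction only under the paper's convention that a DSRG satisfies $0<t<k$; this is the same convention the paper's own proof relies on, so nothing is lost.
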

\begin{proof}Note that if $C$ is a conjugacy class of $G$,\;then so do $C^{(-1)}$.\;Define
$$S_1=\bigcup_{\{C_i:C_i^{(-1)}\subseteq S\}}C_i\;\text{and\;} S_2=\bigcup_{\{C_i:C_i^{(-1)}\nsubseteq S\}}C_i,\;$$
then $S_1$ and $S_2$ are also a union of some conjugacy classes of $G$,\;and hence $\overline{S_1},\overline{S_2}\in\bm{\mathcal{Z}}(\mathbb{Z}[G])$.\;Thus from Theorem \ref{l-eigenvalues},\;the eigenvalues of Cayley graphs $\mathcal{C}(G,S_1)$ and $\mathcal{C}(G,S_2)$ are $\{\lambda_{1,1},\;\cdots,\;\lambda_{1,r}\}$ and $\{\lambda_{2,1},\cdots,\lambda_{2,r}\}$ of the same multiplicities $n_1^2,n_2^2,\cdots,n_r^2$ respectively,\;where
$$\lambda_{1,i}=\frac{1}{n_i}\sum_{s\in S_1}\chi_i(s),\;\lambda_{2,i}=\frac{1}{n_i}\sum_{s\in S_2}\chi_i(s),\;$$
for $1\leqslant i\leqslant r$.\;Suppose Cayley graph $\mathcal{C}(G,S)$ is a DSRG,\;then all the eigenvalues of $\mathcal{C}(G,S)$ are real numbers.\;Note that $S_1\cup S_2=S$,\;then from Theorem \ref{l-eigenvalues},\;we have
\begin{equation*}
\lambda_i=\frac{1}{n_i}\sum_{s\in S}\chi_i(s)=\frac{1}{n_i}\sum_{s\in S_1}\chi_i(s)+\frac{1}{n_i}\sum_{s\in S_2}\chi_i(s)=\lambda_{1,i}+\lambda_{2,i}
\end{equation*}
and
\begin{equation*}
\begin{aligned}
\lambda_{1,i}&=\frac{1}{n_i}\sum_{s\in C_i,C_i\subseteq S_1}\chi_i(s)=\frac{1}{2n_i}\sum_{C_i\subseteq A}\left(\chi_i(\overline{C_i})+\chi_i(\overline{C_i^{(-1)}})\right)\\
&=\frac{1}{n_i}\sum_{C_i\subseteq A}\mathbf{R}(\chi_i(\overline{C_i}))\in\mathbb{R}
\end{aligned}
\end{equation*}
for any $1\leqslant i\leqslant r$,\;where $\mathbf{R}(z)$ denotes the real component of $z$.\;Therefore all the eigenvalues $\{\lambda_{2,i}\}_{i=1}^r$ of $\mathcal{C}(G,S_2)$ are real.\;This is a contradiction since Cayley graph $\mathcal{C}(G,S_2)$ is a regular non-empty directed graph without undirected edges,\;it has at least one non-real eigenvalue.
\end{proof}

Then we can get the following corollary easily since any conjugacy class of an abelian group $G$ just consist of only one element.\;

\begin{cor}(\cite{J1})A DSRG cannot be a Cayley graph $\mathcal{C}(G,S)$ of an abelian group.\;
\end{cor}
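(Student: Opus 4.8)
The plan is to deduce this corollary directly from Theorem~\ref{t-conjugacy} by checking that the hypothesis $\overline{S}\in\bm{\mathcal{Z}}(\mathbb{Z}[G])$ is automatically satisfied whenever $G$ is abelian. The key observation is that in an abelian group every element forms its own conjugacy class: for any $g,h\in G$ we have $h^{-1}gh=g$, so the conjugacy class of $g$ is the singleton $\{g\}$. Consequently every subset $S\subseteq G\setminus\{e\}$ is a union of conjugacy classes, namely $S=\bigcup_{g\in S}\{g\}$, and so the restriction imposed in Theorem~\ref{t-conjugacy} disappears.

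Equivalently, I would argue at the level of the group ring. Since $G$ is abelian, multiplication on $\mathbb{Z}[G]$ is commutative, so every element of $\mathbb{Z}[G]$ commutes with every other; that is, $\bm{\mathcal{Z}}(\mathbb{Z}[G])=\mathbb{Z}[G]$. In particular, for any $S\subseteq G\setminus\{e\}$ we have $\overline{S}\in\mathbb{Z}[G]=\bm{\mathcal{Z}}(\mathbb{Z}[G])$, so the hypothesis of Theorem~\ref{t-conjugacy} holds with no further condition on $S$.

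With this verified, I would simply invoke Theorem~\ref{t-conjugacy}: because $\overline{S}\in\bm{\mathcal{Z}}(\mathbb{Z}[G])$, the Cayley graph $\mathcal{C}(G,S)$ cannot be a DSRG. Since $S$ was an arbitrary subset of $G\setminus\{e\}$, no Cayley graph of the abelian group $G$ is a DSRG, which is exactly the assertion, recovering J{\o}rgensen's original result in \cite{J1}. There is no genuine obstacle here; the statement is an immediate specialization, and the only point worth recording explicitly is the identification $\bm{\mathcal{Z}}(\mathbb{Z}[G])=\mathbb{Z}[G]$ for abelian $G$, which is precisely what lets the general theorem collapse onto the abelian case.
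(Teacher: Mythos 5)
Your proposal is correct and matches the paper's argument exactly: the paper also deduces the corollary from Theorem~\ref{t-conjugacy} by noting that every conjugacy class of an abelian group is a singleton, so any $S\subseteq G\setminus\{e\}$ is a union of conjugacy classes and $\overline{S}\in\bm{\mathcal{Z}}(\mathbb{Z}[G])$ automatically. Your additional remark that this amounts to $\bm{\mathcal{Z}}(\mathbb{Z}[G])=\mathbb{Z}[G]$ for abelian $G$ is just a restatement of the same observation, not a different route.
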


\section{DSRG cannot be a Cayley graph \texorpdfstring{$\mathcal{C}(G,S)$ with $S=\mathbf{C}\cup T$}{Lg}}
For any subset $M$ of $G$,\;recall the set $N_G(M)=\{g\in G|M^g=gMg^{-1}=M\}$,\;is the normalizer
of $M$ in G.\;Let $T$ be an antisymmetric subset of $N_G(M)\setminus(M\cup\{e\})$.\;

\begin{thm}\label{t-normalizer}
 Let $M$ be a subset of $G$ such that $M$ is closed under taking inverses,\;i.e.,\;$M^{(-1)}=M$,\;and $T$ be an antisymmetric subset of $N_G(M)\setminus(M\cup\{e\})$,\;then Cayley graph $\mathcal{C}(G, M\cup T)$ is not a DSRG.
\end{thm}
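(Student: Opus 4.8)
The plan is to split the connection set as a disjoint union $S = M \cup T$ (disjoint because $T \subseteq N_G(M)\setminus(M\cup\{e\})$ forces $T\cap M=\emptyset$), write the adjacency matrix as
\[
A=\rho_{reg}(\overline{S})=\rho_{reg}(\overline{M})+\rho_{reg}(\overline{T})=:B+C,
\]
and then exploit the fact that $B$ and $C$ commute. First I would record the two structural facts about the summands. Since $M^{(-1)}=M$, the digraph $\mathcal{C}(G,M)$ satisfies $x\to y$ iff $y\to x$, so $B$ is a real symmetric $0$--$1$ matrix and is therefore diagonalizable with only real eigenvalues. Since $T\cap T^{(-1)}=\emptyset$ and $e\notin T$, the digraph $\mathcal{C}(G,T)$ is a loopless $|T|$-regular digraph with no $2$-cycles, so when $T\neq\emptyset$ Lemma \ref{l-withoutundirected} guarantees that $C$ has at least one non-real eigenvalue. (If $T=\emptyset$ then $S=M$ is symmetric and $\mathcal{C}(G,S)$ is undirected, hence cannot be a proper DSRG with $0<t<k$.)

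The crucial step is to prove that $\overline{M}$ and $\overline{T}$ commute in the group ring, and this is exactly where the hypothesis $T\subseteq N_G(M)$ enters. Using $tMt^{-1}=M$ for every $t\in T$, I would compute
\[
\overline{T}\,\overline{M}=\sum_{t\in T}\sum_{m\in M} tm=\sum_{t\in T}\Big(\sum_{m\in M} tmt^{-1}\Big)t=\sum_{t\in T}\overline{tMt^{-1}}\,t=\sum_{t\in T}\overline{M}\,t=\overline{M}\,\overline{T}.
\]
Because $\rho_{reg}$ is the algebra homomorphism given by left multiplication on $\mathbb{C}[G]$, applying it yields $BC=CB$.

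With commutativity in hand I would argue by contradiction. Suppose $\mathcal{C}(G,M\cup T)$ is a DSRG; then by Proposition \ref{p-2} the matrix $A=B+C$ has only real eigenvalues. Since $B$ is real symmetric, $\mathbb{C}[G]$ decomposes as the direct sum of the eigenspaces $V_\beta$ of $B$, and because $C$ commutes with $B$ each $V_\beta$ is $C$-invariant, hence $A$-invariant. On $V_\beta$ we have $A|_{V_\beta}=\beta\,\mathbf{1}_{V_\beta}+C|_{V_\beta}$ with $\beta\in\mathbb{R}$. The eigenvalues of $A|_{V_\beta}$ are roots of a divisor of the characteristic polynomial of $A$, hence are real; subtracting the real scalar $\beta$ shows that the eigenvalues of $C|_{V_\beta}$ are real as well. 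Ranging over all $\beta$, every eigenvalue of $C$ is real, contradicting the previous paragraph. Therefore $\mathcal{C}(G,M\cup T)$ is not a DSRG.

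The main obstacle I anticipate is the commutativity identity, since it is the only place the normalizer hypothesis is used and it is what makes the symmetric/antisymmetric splitting interact correctly; the remaining spectral reduction is then routine, requiring only that $B$ be genuinely diagonalizable (guaranteed by symmetry) and that the spectrum of each restriction $A|_{V_\beta}$ be contained in the spectrum of $A$ (guaranteed since $V_\beta$ is $A$-invariant). This argument generalizes the proof of Theorem \ref{t-conjugacy}, replacing the centrality of $\overline{S}$ by the weaker commutation of its symmetric and antisymmetric parts.
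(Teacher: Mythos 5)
Your proposal is correct and follows essentially the same route as the paper: it splits $S=M\cup T$, derives commutativity of the two adjacency summands from the normalizer hypothesis (the paper does this by an entrywise bijection between products $ct$ and $tc'$ with $c'=t^{-1}ct$, which is exactly your group-ring identity $\overline{M}\,\overline{T}=\overline{T}\,\overline{M}$ read off coefficientwise), notes the $M$-part is symmetric with real spectrum, and reaches the same contradiction with Lemma \ref{l-withoutundirected}. Your eigenspace-restriction argument is merely a more careful rendering of the paper's claim that eigenvalues of a sum of commuting matrices are sums of corresponding eigenvalues, and your explicit treatment of the $T=\emptyset$ case is a small point the paper leaves implicit.
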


\begin{proof}
For any $x,\;y\in G$,\;we define the sets $A_{x,y}=\{(c,t)|(c,t)\in {M}\times T,ct=y x^{-1}\}$ and $B_{x,y}=\{(t',c')|(t',c')\in T\times{M},t'c'=y x^{-1}\}$.\;We assert that $|A_{x,y}|=|B_{x,y}|$.\;We prove this assertion by giving a one-to-one correspondence between the sets $A_{x,y}$ and $B_{x,y}$.\;

Indeed,\;if $(c,t)\in A_{x,y}$ such that $ct=y x^{-1}$,\;then the assumption $t\in T\subseteq N_G(M)$ implies $c'=t^{-1}ct\in {M}$,\;so $y x^{-1}=tc'$ and $(t,c')\in B_{x,y}$.\;Conversely,\;if $(t',c')\in B_{x,y}$ satisfies $t'c'=y x^{-1}$,\;then $c=t'c't'^{-1}\in {M}$.\;Hence the array $(c',t)$ satisfies $ct'=y x^{-1}$ and then $(c,t')\in A_{x,y}$.\;This gives a one-to-one correspondence between the sets $A_{x,y}$ and $B_{x,y}$.\;

Therefore,\;$\mathbf{A}(\mathcal{C}(G,M))$ and $\mathbf{A}(\mathcal{C}(G,T))$ are commutative and $\mathbf{A}(\mathcal{C}(G,M))+\mathbf{A}(\mathcal{C}(G,T))=\mathbf{A}(\mathcal{C}(G,M\cup T))$.\;Then the eigenvalues of $\mathbf{A}(\mathcal{C}(G,M\cup T))$ are the sums of the corresponding eigenvalues of $\mathbf{A}(\mathcal{C}(G,M))$ and $\mathbf{A}(\mathcal{C}(G,T))$.\;

If Cayley graph $\mathcal{C}(G, M\cup T)$ is a DSRG,\;then all the eigenvalues of $\mathbf{A}(\mathcal{C}(G,M\cup T))$ are real numbers.\;Note that all the eigenvalues of $\mathbf{A}(\mathcal{C}(G,M))$ are real since $\mathbf{A}(\mathcal{C}(G,M))$ is a symmetric matrix,\;this shows that all the eigenvalues of $\mathbf{A}(\mathcal{C}(G,T))$  are real.\;But Lemma \ref{l-withoutundirected} implies that at least one eigenvalues of $\mathbf{A}(\mathcal{C}(G,T))$ is not real.\;This is a contradiction.\;
\end{proof}

Let $S=\mathbf{C}\cup T$.\;where $\mathbf{C}$  is a union of some conjugate classes of $G$ and $T$ is an antisymmetric subset such that $\mathbf{C}\cap T=\emptyset$.\;From Theorem \ref{t-normalizer},\;we have the following corollary.

\begin{cor}DSRG cannot be a Cayley graph \texorpdfstring{$\mathcal{C}(G,S)$}{Lg} with $S=\mathbf{C}\cup T$,\;where $\mathbf{C}$ is a union of some conjugate classes of $G$ and $T$ is an antisymmetric subset of $G$ such that $\mathbf{C}\cap T=\emptyset$.\;
\end{cor}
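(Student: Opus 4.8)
The plan is to recognize the corollary as a direct instance of Theorem~\ref{t-normalizer}, taking $M=\mathbf{C}$, and to verify that the hypotheses of that theorem are met in the present setting. First I would record the one structural fact that makes everything work: since $\mathbf{C}$ is a union of conjugacy classes of $G$, it is invariant under conjugation by every element of $G$, so $g\mathbf{C}g^{-1}=\mathbf{C}$ for all $g\in G$ and hence $N_G(\mathbf{C})=G$. Consequently the requirement that ``$T$ is an antisymmetric subset of $N_G(M)\setminus(M\cup\{e\})$'' reduces to $T\subseteq G\setminus(\mathbf{C}\cup\{e\})$, which is immediate: $\mathbf{C}\cap T=\emptyset$ is assumed, and $e\notin T$ because $T$ is antisymmetric (if $e\in T$ then $e=e^{-1}\in T\cap T^{(-1)}$). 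Thus the normalizer and antisymmetry conditions of Theorem~\ref{t-normalizer} are automatic here.

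The only hypothesis of Theorem~\ref{t-normalizer} not handed to us verbatim is $M^{(-1)}=M$, i.e.\ that $\mathbf{C}$ is closed under inverses, and a union of conjugacy classes need not be inverse-closed (e.g.\ a single non-real class), so I would first pass to the inverse-closed core. Writing $\mathbf{C}=\mathbf{C}_1\cup\mathbf{C}_2$ with $\mathbf{C}_1=\bigcup\{C_i\subseteq\mathbf{C}:C_i^{(-1)}\subseteq\mathbf{C}\}$ and $\mathbf{C}_2=\bigcup\{C_i\subseteq\mathbf{C}:C_i^{(-1)}\not\subseteq\mathbf{C}\}$, one checks (exactly as in the proof of Theorem~\ref{t-conjugacy}, using that conjugacy classes partition $G$) that $\mathbf{C}_1^{(-1)}=\mathbf{C}_1$ and that $\mathbf{C}_2$ is antisymmetric with $\mathbf{C}_2^{(-1)}\cap\mathbf{C}=\emptyset$; both $\mathbf{C}_1$ and $\mathbf{C}_2$ are again unions of conjugacy classes and hence $G$-invariant. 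I would then apply Theorem~\ref{t-normalizer} with $M=\mathbf{C}_1$ and with the directed connection set $\mathbf{C}_2\cup T$ playing the role of $T$: $\mathbf{A}(\mathcal{C}(G,\mathbf{C}_1))$ is symmetric, hence has real spectrum, and it commutes with $\mathbf{A}(\mathcal{C}(G,\mathbf{C}_2\cup T))$ by the bijection argument (valid because $\mathbf{C}_2\cup T\subseteq G=N_G(\mathbf{C}_1)$), while $\mathbf{A}(\mathcal{C}(G,S))$ is their sum. If $\mathcal{C}(G,S)$ were a DSRG its spectrum would be real, which forces the spectrum of $\mathbf{A}(\mathcal{C}(G,\mathbf{C}_2\cup T))$ to be real, and Lemma~\ref{l-withoutundirected} then supplies the contradiction.

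The step I expect to be the main obstacle is confirming that $\mathbf{C}_2\cup T$ really defines a digraph \emph{without undirected edges}, which is exactly what Lemma~\ref{l-withoutundirected} requires. Since $\mathbf{C}_2$ and $T$ are each antisymmetric, a $2$-cycle in $\mathcal{C}(G,\mathbf{C}_2\cup T)$ can arise only from a cross pair $u\in\mathbf{C}_2$, $u^{-1}\in T$ (or its mirror), i.e.\ from $T\cap\mathbf{C}_2^{(-1)}$. The hypothesis gives only $\mathbf{C}\cap T=\emptyset$, not $\mathbf{C}^{(-1)}\cap T=\emptyset$, so a priori such crossings could occur; ruling them out, or absorbing the offending elements into the symmetric part while preserving the $G$-invariance needed for commutativity, is the delicate point. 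In the clean and presumably intended case $\mathbf{C}^{(-1)}=\mathbf{C}$ one has $\mathbf{C}_2=\emptyset$ and the directed part is simply $T$, so the corollary follows immediately from Theorem~\ref{t-normalizer} with $M=\mathbf{C}$; the general case requires the extra bookkeeping just described to guarantee antisymmetry of the combined directed set before invoking Lemma~\ref{l-withoutundirected}.
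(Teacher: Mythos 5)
Your proposal is, in outline, exactly the paper's own proof: the paper makes the same split $\mathbf{C}=\mathbf{C}_1\cup\mathbf{C}_2$ into the inverse-closed classes and the rest, sets $M=\mathbf{C}_1$ and $T'=T\cup\mathbf{C}_2$, observes $N_G(\mathbf{C}_1)=G$, and cites Theorem~\ref{t-normalizer}. So as far as the approach goes, there is nothing that separates the two.

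What does separate them is that the ``main obstacle'' you flag is not a defect peculiar to your write-up; it is a genuine gap in the paper's proof. Theorem~\ref{t-normalizer} requires its directed part to be antisymmetric --- that is precisely what licenses the appeal to Lemma~\ref{l-withoutundirected} --- and the paper checks only that $\mathbf{C}_1^{(-1)}=\mathbf{C}_1$ and that $T'\subseteq G\setminus(\mathbf{C}_1\cup\{e\})$; the condition $T'\cap T'^{(-1)}=\emptyset$ is never verified, and under the corollary's stated hypotheses it can fail, exactly through the cross pairs you describe. Concretely, let $C$ be a non-real conjugacy class (so $C^{(-1)}\cap C=\emptyset$), and take $\mathbf{C}=C$, $T=C^{(-1)}$: then $T$ is antisymmetric and disjoint from $\mathbf{C}$, yet $\mathbf{C}_1=\emptyset$ and $T'=C\cup C^{(-1)}$ is symmetric, so $\mathbf{A}(\mathcal{C}(G,T'))$ is a symmetric matrix with real spectrum and the intended contradiction never materializes. (That particular $S$ is still covered by Theorem~\ref{t-conjugacy}, since it is a union of classes; but if $T$ contains only \emph{some} inverses of elements of $\mathbf{C}_2$ together with other elements, then $S$ is not a union of classes, the symmetric part $S\cap S^{(-1)}$ need not be a union of classes either, and neither Theorem~\ref{t-conjugacy} nor Theorem~\ref{t-normalizer} need apply --- no argument in the paper handles this case.) Both your argument and the paper's become complete exactly under the extra hypothesis $T\cap\mathbf{C}^{(-1)}=\emptyset$ (in particular in your ``clean case'' $\mathbf{C}^{(-1)}=\mathbf{C}$), for then one checks easily that $T'=T\cup\mathbf{C}_2$ is antisymmetric. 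So your diagnosis of where the difficulty lies is exactly right; the corollary as literally stated should be regarded as proven only in that restricted setting, and its statement ought to carry the additional hypothesis.
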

\begin{proof}Observe that the normalizer of $\mathbf{C}$ is $G$.\;As the proof of Theorem \ref{t-conjugacy},\;we can write $\mathbf{C}=\mathbf{C}_1\cup \mathbf{C}_2$,\;where $\mathbf{C}_1=\bigcup_{\{C_i:C_i^{(-1)}\subseteq \mathbf{C}\}}C_i$ and $\mathbf{C}_2=\bigcup_{\{C_i:C_i^{(-1)}\nsubseteq \mathbf{C}\}}C_i$.\;Let $\mathbf{C}'=\mathbf{C}_1$ and $T'=T\cup \mathbf{C}_2\subseteq G=N_G(\mathbf{C}')$,\;then $\mathbf{C}'=\mathbf{C}'^{(-1)}$ is closed under taking inverses and $T'\subseteq G\setminus (\mathbf{C}_1\cup\{e\})$.\;Then this corollary follows from the Theorem \ref{t-normalizer} with
$M=\mathbf{C}'$ and $T=T'$.\;
\end{proof}

\section{A generalization of semidirect product constructions of DSRGs}
Let $G$ be a finite group,\;and $\mathbb{Z}[G]$ be the integral group ring.\;Define
\[\mathbb{Z}_{\geqslant0}[G]=\left\{\sum_{g\in G}a_gg\bigg|a_g\in\mathbb{Z},\;a_g\geqslant 0,\;\forall g\in G\right\}.\]
Let $\mathcal{P}(G)=\{S:S \text{\;is a multisubset of\;} G\}$ be the set of all the multisubsets of $G$.\;Define
\begin{equation}\label{pG}
\mathcal{P}_G:\mathbb{Z}_{\geqslant0}[G]\rightarrow\mathcal{P}(G),\;\sum_{g\in G}a_gg\mapsto \biguplus_{g\in N}a_g\oplus \{g\}.
\end{equation}
This is a one-to-one correspondence between  $\mathbb{Z}_{\geqslant0}[G]$ and $\mathcal{P}(G)$.\;

\subsection{Preliminary}
\begin{dfn}(Semidirect product) Let $N$ and $H$ be two groups.\;Let $\theta:H\rightarrow \mathbf{Aut}(N)$ be a given homomorphism from $H$ to  $\mathbf{Aut}(N)$.\;Let
$N\rtimes_\theta H$ be the direct product set of $N$ and $H$,\;with the following operation for the product of two elements
\begin{equation*}
(n,h)(n',h')=(n[\theta(h)(n')], hh').
\end{equation*}
Then $N\rtimes_\theta H$ is called the semidirect product of $N$ and $H$
with respect to the  homomorphism $\theta$.\;
\end{dfn}

We can define an action of $H$ on $N$.\;Let $N\rtimes_\theta H$ be the semidirect product of $N$ and $H$,\;then $H$ acts on the $N$ by defining $h\circ n:=hnh^{-1}=\theta(h)(n)$.\;Then the orbit of $n$ is the set $H\circ n=\{h\circ n:h\in H\}$ and hence $N$ has a unique partition consisting of orbits.\;There is a trivial $H$-orbit only consists of the identity element $e_N$,\;and the other $H$-orbits are called the untrivial $H$-orbits of $N$.\;

Let $N_1$ be a subset of $N$,\;we define
\begin{equation}\label{musical}
N_1^{\flat}=\mathcal{P}_N\left(\sum_{h\in H}h\overline{N_1}h^{-1}\right),
\end{equation}
where $\mathcal{P}_N$ is defined as ($\ref{pG}$).\;Therefore $N_1^{\flat}$ is a multisubet of $N$,\;and
\[\overline{N_1^{\flat}}=\sum_{h\in H}h\overline{N_1}h^{-1}.\]
The following example gives an interpretation of the above notations.\;
\begin{exmp}Considering the dihedral group $D_8=C_4\rtimes C_2$,\;where $C_4=\langle x|x^4=1\rangle$.\;Then $C_2\circ x^2=\{x^2\}$ and $\{x^2\}^\flat=\mathcal{P}_{C_4}(2x^2)=\{x^2,x^2\}$.\;If $N_1=\{x^1,x^2,x^3\}$,\;then
$$N_1^{\flat}=\mathcal{P}_{C_4}(2x^1+2x^3+2x^2)=\{x^1,x^1,x^2,x^2,x^3,x^3\}.$$
\end{exmp}

\subsection{Semidirect product constructions of DSRGs}
Let $N$ be a finite group of order $m$.\;Art M.\;Duval and Dmitri Iourinski in \cite{D} define a group automorphism $\beta\in \mathbf{Aut}(N)$ has the $q$-orbit condition if each of its untrivial orbits contains $q$ elements.\;In other words,\;$\beta$ satisfies $\beta^{q}(a) = a$,\;for all $a\in N$,\;and $\beta^{u}(a) = a$ implies $q|u$,\;for all $a \neq e_{N}$.\;

We can also define the $q$-orbit condition in terms of group action.\;Let $\beta\in \mathbf{Aut}(N)$ be a group automorphism,\;then the subgroup $\langle\beta\rangle$,\;i.e.,\;the subgroup of $\mathbf{Aut}(N)$ generated by $\beta$,\;acts on $N$ naturally.\;Then the $\beta\in \mathbf{Aut}(N)$ has the $q$-orbit condition provided  each untrivial $\langle\beta\rangle$-orbit have $q$ elements.\;

The following constructions of DSRGs were given by Art M.\;Duval and Dmitri Iourinski.
\begin{thm}(\cite{D})\label{semidirectDSRG}
 Let $N$ be a finite group of order $m$.\;Let $\beta \in \mathbf{Aut}(N)$ have the $q$-orbit condition.\;Let $H$ be the cyclic group of order $q$ with generator $b$,\;and define $\theta: H\rightarrow \mathbf{Aut}(N)$ by $\theta(b^{u})=\beta^{u}$.\;Let $N_1$ be a set of representatives of the nontrivial orbits of $\beta$.\;Then the Cayley graph $\mathcal{C}(N\rtimes_{\theta}H,N_1\times H)$
is a DSRG with the parameters
$$\left(mq,m-1,\frac{m-1}{q}, \frac{m-1}{q}-1,\frac{m-1}{q}\right).$$
\end{thm}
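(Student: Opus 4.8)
The plan is to invoke Lemma~\ref{l-DSRGCayleyGraphsGruopRing}, which reduces the claim to a single identity in the integral group ring of $G := N\rtimes_\theta H$. Write $S := N_1\times H$, so that $\overline{S}=\sum_{n_1\in N_1}\sum_{h\in H}(n_1,h)$. First I would dispose of the two size conditions. Since every element of $G$ is uniquely of the form $(n,h)$, we have $|G|=mq=n$, matching the first parameter. The $q$-orbit condition forces each nontrivial $\langle\beta\rangle$-orbit to have exactly $q$ elements, and these orbits partition $N\setminus\{e_N\}$; hence the set $N_1$ of orbit representatives has size $(m-1)/q$ and $|S|=|N_1|\cdot q=m-1=k$. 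Note also that $e_N\notin N_1$, so $(e_N,e_H)\notin S$ and $S$ is a legitimate connection set.

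The core of the proof is the computation of $\overline{S}^{\,2}$. Using $(n_1,h)(m_1,h')=(n_1\,\theta(h)(m_1),hh')$ and substituting $g=hh'$, for each fixed first coordinate the second coordinate runs once over all of $H$, so the sum factors as
\begin{equation*}
\overline{S}^{\,2}=\left(\sum_{n_1,m_1\in N_1}\sum_{h\in H}n_1\,\theta(h)(m_1)\right)\cdot\overline{H},
\end{equation*}
where the parenthesized factor is read inside $\mathbb{Z}[N]\subseteq\mathbb{Z}[G]$ and $\overline{H}=\sum_{h\in H}(e_N,h)$. The decisive step is the inner orbit sum: for fixed $m_1$ we have $\sum_{h\in H}\theta(h)(m_1)=\sum_{u=0}^{q-1}\beta^u(m_1)$, which by the $q$-orbit condition is exactly the sum over the $\langle\beta\rangle$-orbit of $m_1$, each element occurring once. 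Summing over the representatives in $N_1$ therefore sweeps out every nontrivial orbit exactly once, yielding $\overline{N}-e_N$. Hence the parenthesized factor equals $\overline{N_1}(\overline{N}-e_N)=\frac{m-1}{q}\overline{N}-\overline{N_1}$, using $\overline{N_1}\,\overline{N}=|N_1|\overline{N}$.

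Finally I would multiply by $\overline{H}$, invoking $\overline{N}\,\overline{H}=\overline{G}$ and $\overline{N_1}\,\overline{H}=\overline{S}$, to obtain $\overline{S}^{\,2}=\frac{m-1}{q}\overline{G}-\overline{S}$. A direct substitution then shows this coincides with $te+\lambda\overline{S}+\mu(\overline{G}-e-\overline{S})$ precisely when $t=\mu=\frac{m-1}{q}$ and $\lambda=\frac{m-1}{q}-1$; indeed, with these values the $e$-terms cancel and the $\overline{S}$-coefficient collapses to $-1$. By Lemma~\ref{l-DSRGCayleyGraphsGruopRing} this yields the asserted parameters. I expect the main obstacle to be the bookkeeping in the factorization of $\overline{S}^{\,2}$—in particular, justifying cleanly that the $H$-coordinate detaches as a free factor $\overline{H}$—together with the careful use of the $q$-orbit condition to guarantee that $\sum_{u=0}^{q-1}\beta^u(m_1)$ is a genuine orbit sum with no repetition, which is exactly what makes the $N_1$-sum telescope to $\overline{N}-e_N$.
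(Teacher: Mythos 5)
Your proof is correct, and the computation is sound at every step: the substitution $g=hh'$ legitimately detaches $\overline{H}$ as a right factor (since $(n,e_H)(e_N,g)=(n,g)$), the $q$-orbit condition guarantees that $\sum_{u=0}^{q-1}\beta^u(m_1)$ is a repetition-free orbit sum so that the sum over $N_1$ telescopes to $\overline{N}-e_N$, and the resulting identity $\overline{S}^{\,2}=\frac{m-1}{q}\overline{G}-\overline{S}$ agrees with $te+\lambda\overline{S}+\mu(\overline{G}-e-\overline{S})$ exactly for the stated parameters. However, your route is genuinely different from the paper's. The paper does not verify the group-ring identity for this theorem at all: it recovers Theorem~\ref{semidirectDSRG} as the $r=1$ case of the corollary following Construction~\ref{c-Generalsemidirect}, and that construction is proved with the induced-representation machinery of Lemmas~\ref{l-AdjacentMatrix} and~\ref{l-CharacteristicMinimum} (relating the characteristic and minimal polynomials of $L=\mathbf{A}(\mathcal{C}(N\rtimes_\theta H,N_1\times H))$ to those of the Cayley multigraph $\mathcal{C}(N,N_1^\flat)$), together with a Perron--Frobenius argument forcing $L(L-x_2I)$ to be a multiple of the all-ones matrix. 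Note that your key inner sum $\sum_{m_1\in N_1}\sum_{h\in H}\theta(h)(m_1)=\overline{N}-e_N$ is precisely the paper's computation $\overline{N_1^\flat}=\overline{N}-e_N$, i.e.\ the hypothesis of Construction~\ref{c-Generalsemidirect} with $x_1=1$, $x_2=-1$; the combinatorial heart is shared, and what differs is how it is converted into the DSRG property. Your conversion, staying inside $\mathbb{Z}[G]$ and checking Lemma~\ref{l-DSRGCayleyGraphsGruopRing} directly, is more elementary and self-contained (and in fact would generalize with little extra work to the full hypothesis $\overline{N_1^\flat}=x_1\overline{N}+x_2e_N$). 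The paper's spectral detour is heavier but buys more: the same machinery also yields the necessary condition of Theorem~\ref{t-Cayleymultidigraph} (a converse-type statement constraining $\overline{N_1^\flat}$ whenever the Cayley graph is a DSRG), which a direct verification of one identity cannot provide.
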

\subsection{Induced representations (modules)}

We now introduce some basic concepts of \emph{induced representations (modules)}.\;Let $H\leqslant G$ and $(V,\rho)$ be an $F$-linear representation of $H$.\;We can  extend it as follows
\[\left(\sum_{h}a_hh\right)v:=\sum_{h}a_h(hv).\]
Then $V$ has an $F[H]$-module structure.\;

Since the group algebra $F[G]$ is an $(F[G],F[H])$-bimodule,\;we can construct the $F[G]$-module $F[G]\otimes_{F[H]}V$.\;We call this $F[G]$-module the \emph{induced} $F[G]$-\emph{module} of $V$ (or the \emph{induction} of $V$ to $G$),\;and we denote it by $\mathbf{Ind}_{H}^GV$.\;We denote the
$F$-linear representation arising from the $\mathbf{Ind}_{H}^GV$ by $\rho^G$,\;and we call $\rho^G$  an \emph{induced representation}.\;

Let $B=\{v_1,\cdots,v_{m}\}$ be an $F$-basis of $V$ and $\{g_1,\cdots,g_d\}$ be a (left) transversal for $H$ in $G$.\;Let $\rho_B(g)=(a_{ij}(g))_{m\times m}$ for any $g\in H$ and
\[\dot{a}_{ij}(g)=\left\{
  \begin{array}{ll}
    a_{ij}(g), & \hbox{$g\in H$,} \\
    0, & \hbox{$g\not\in H$.}
  \end{array}
\right.\]
The following lemma gives some basic propositions about induced representations.\;
\begin{lem}\label{l-InducedModules}
(1)\;$\dim_{F}\mathbf{Ind}_{H}^GV=|G:H|\dim_{F}V$.\\
(2)\;The set $C=\{g_i\otimes v_j|1\leq i\leq d,\;1\leq j\leq m\}=\{g_1\otimes v_1,\cdots,g_1\otimes v_m,\cdots,g_d\otimes v_d,\cdots,g_d\otimes v_m\}$ is an $F$-basis of $\mathbf{Ind}_{H}^GV$.\;Moreover,\;as an $F$-vector space we have
\[\mathbf{Ind}_{H}^GV=\bigoplus_{i=1}^d g_i(1\otimes_{F[H]}V).\]
(3)\;Let $\dot{\rho}_B(g)=(\dot{a}_{ij}(g))_{m\times m}$.\;Then
\begin{equation}\label{inducedReMatrix}
\rho^G_{C}(g)=\left(
    \begin{array}{ccc}
      \dot{\rho}_B(g_1^{-1}gg_1) & \cdots & \dot{\rho}_B(g_1^{-1}gg_d) \\
      \cdots & \cdots & \cdots \\
      \dot{\rho}_B(g_d^{-1}gg_1) & \cdots & \dot{\rho}_B(g_d^{-1}gg_d) \\
    \end{array}
  \right),
\end{equation}
where $\rho_B(g)$ is the matrix of representation $\rho(g)$ with respect to the basis $B$,\;and $\rho^G_{C}(g)$ is the matrix of representation $\rho^G(g)$ with respect to the basis $C$,\;for any $g\in G$.\\
(4)\;Let $F=\mathbb{C}$ and $(V,\rho)$ be a $\mathbb{C}$-linear representation of $H$,\;the $\mathbb{C}$-character of $\rho$ is $\chi$.\;We denote the character of $\rho^G$ by $\chi^G$,\;and we call $\chi^G$ an induced character.\;Then for any $g\in G$ we have
\[\chi^G(g)=\sum_{i=1}^d\dot{\chi}(g_i^{-1}gg_i).\]
(5)\;Let $U$ be an $F[G]$-module,\;then $F[G]$ can be regarded as an $F[H]$-module,\;this $F[H]$-module is called the restriction of $U$ to $H$ and we denote it by $\mathbf{Res}_H^GU$.\;Furthermore,\;if $U$ is an $\mathbb{C}[G]$-module having character $\chi$,\;then we denote the character of the $\mathbb{C}[H]$-module $\mathbf{Res}_H^GU$ by $\chi|_H$.
\end{lem}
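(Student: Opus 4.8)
The plan is to realize the induced module concretely by decomposing the group algebra $F[G]$ as a right $F[H]$-module along the transversal, and then to read off parts (1)--(4) directly from that decomposition; part (5) is purely definitional and needs no argument. First I would record that, because $\{g_1,\dots,g_d\}$ is a left transversal for $H$ in $G$, the group algebra splits as a direct sum of right $F[H]$-modules
\[F[G]=\bigoplus_{i=1}^d g_iF[H],\]
each summand $g_iF[H]$ being free of rank one over $F[H]$ via $g_ih\mapsto h$. Since $\otimes_{F[H]}$ commutes with finite direct sums, tensoring with $V$ yields
\[\mathbf{Ind}_{H}^GV=F[G]\otimes_{F[H]}V=\bigoplus_{i=1}^d\bigl(g_iF[H]\otimes_{F[H]}V\bigr).\]
Using the relation $g_ix\otimes v=g_i\otimes xv$ for $x\in F[H]$, each summand collapses to $g_i\otimes V$, on which $v\mapsto g_i\otimes v$ is an $F$-linear isomorphism from $V$. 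Because $g_i\otimes V=g_i(1\otimes_{F[H]}V)$ as subsets (applying the left $F[G]$-action), this establishes the direct-sum decomposition asserted in (2), shows that $C=\{g_i\otimes v_j\}$ is an $F$-basis, and, on summing dimensions, gives $\dim_F\mathbf{Ind}_{H}^GV=dm=|G:H|\dim_F V$, which is (1).

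For (3) I would compute the action of $\rho^G(g)$ on a basis vector directly. By definition of the left $F[G]$-action on the induced module, $\rho^G(g)(g_i\otimes v_j)=(gg_i)\otimes v_j$. Writing $gg_i=g_kh$ with the unique index $k$ and $h=g_k^{-1}gg_i\in H$, I rewrite this as $g_k\otimes\rho(h)(v_j)=\sum_{l}a_{lj}(h)(g_k\otimes v_l)$. Hence the only nonzero block in block-column $i$ sits in block-row $k$ and equals $\rho_B(h)=\rho_B(g_k^{-1}gg_i)$. The key bookkeeping point is that for every other index $k'$ one has $g_{k'}^{-1}gg_i\notin H$, so the $(k',i)$-block vanishes; the dotted matrix $\dot{\rho}_B(g_{k'}^{-1}gg_i)$ encodes exactly this vanishing. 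Consequently the $(k,i)$-block of $\rho^G_C(g)$ is uniformly $\dot{\rho}_B(g_k^{-1}gg_i)$ for all pairs, which is the asserted block matrix (\ref{inducedReMatrix}).

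Part (4) then follows by taking traces with $F=\mathbb{C}$: $\chi^G(g)=\mathrm{tr}\,\rho^G_C(g)$ is the sum of the traces of the diagonal blocks $\dot{\rho}_B(g_i^{-1}gg_i)$, and $\mathrm{tr}\,\dot{\rho}_B(x)=\dot{\chi}(x)$ for every $x\in G$ (it equals $\chi(x)$ when $x\in H$ and $0$ otherwise), so $\chi^G(g)=\sum_{i=1}^d\dot{\chi}(g_i^{-1}gg_i)$. Finally, (5) merely names the restriction of an $F[G]$-module to $F[H]$ and its character, so it requires no proof. I expect the only genuine obstacle to be the indexing in (3): verifying that the transversal factorization $gg_i=g_kh$ is well defined with $k$ and $h$ unique, and confirming that the $\dot{\rho}$-convention captures all off-diagonal vanishing simultaneously. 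Everything else is routine once the right-$F[H]$-module splitting of $F[G]$ is in hand.
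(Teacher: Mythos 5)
Your proof is correct. The paper states this lemma without proof, citing it as standard material from Alperin and Bell's \emph{Groups and representations} \cite{JL}; your argument --- splitting $F[G]=\bigoplus_{i=1}^d g_iF[H]$ as right $F[H]$-modules over the transversal, collapsing each summand after tensoring to obtain (1) and (2), and computing the block structure via the unique factorization $gg_i=g_kh$ with the $\dot{\rho}_B$-convention for (3) and (4) --- is exactly the standard textbook proof that the paper implicitly relies on, and your handling of the one delicate point (the off-diagonal vanishing in (3)) is sound.
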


\begin{lem}\label{regInduced}Let $\rho_{reg}$ be the left regular representation of $H$.\;Then $\rho_{reg}^G$ is the left regular representation of $G$.
\end{lem}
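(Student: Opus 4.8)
The plan is to exhibit an explicit isomorphism of $F[G]$-modules between the induced module $\mathbf{Ind}_H^G V$ and $F[G]$ equipped with left multiplication, the latter being by definition the left regular representation of $G$. First I would recall that the left regular representation $\rho_{reg}$ of $H$ is realized on the space $V=F[H]$, with $H$ acting by left multiplication; hence $\mathbf{Ind}_H^G V=F[G]\otimes_{F[H]}F[H]$, where $F[G]$ is viewed as a right $F[H]$-module and the left $F[G]$-action on the first tensor factor furnishes the induced representation $\rho_{reg}^G$.

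The key step is to define the multiplication map
\[
\phi:F[G]\otimes_{F[H]}F[H]\longrightarrow F[G],\qquad a\otimes b\mapsto ab,
\]
where on the right $b\in F[H]$ is regarded as an element of $F[G]$ via the inclusion $H\leqslant G$. I would check that $\phi$ is well defined because the assignment $(a,b)\mapsto ab$ is $F[H]$-balanced: for $h\in H$ one has $(ah)b=a(hb)$, so $\phi((ah)\otimes b)=\phi(a\otimes(hb))$. Next I would verify that $\phi$ is left $F[G]$-linear: for any $g\in G$ the induced action sends $a\otimes b$ to $(ga)\otimes b$, and $\phi((ga)\otimes b)=(ga)b=g(ab)=g\,\phi(a\otimes b)$, which is exactly the left regular action of $G$ on the target. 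Thus $\phi$ intertwines $\rho_{reg}^G$ with the left regular representation of $G$.

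It remains to show $\phi$ is bijective. One route is to invoke the canonical isomorphism $M\otimes_R R\cong M$ with $R=F[H]$ and $M=F[G]$, which is precisely $\phi$. A more concrete route, which I would likely include, uses Lemma \ref{l-InducedModules}(2): taking the group elements of $H$ as the basis $B$ of $V=F[H]$ and a left transversal $\{g_1,\dots,g_d\}$ for $H$ in $G$, the set $\{g_i\otimes h:1\leqslant i\leqslant d,\ h\in H\}$ is an $F$-basis of $\mathbf{Ind}_H^G V$; since every element of $G$ is uniquely of the form $g_i h$, the map $\phi$ carries this basis bijectively onto the standard group basis $G$ of $F[G]$, so $\phi$ is an $F$-linear isomorphism. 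Over $F=\mathbb{C}$ one could alternatively argue by characters using Lemma \ref{l-InducedModules}(4): the regular character of $H$ satisfies $\dot{\chi}_{reg}(x)=|H|$ for $x=e$ and $0$ otherwise, whence $\chi_{reg}^G(g)=\sum_{i=1}^d\dot{\chi}_{reg}(g_i^{-1}gg_i)=|G|$ for $g=e$ and $0$ otherwise, which is the regular character of $G$. The only real care needed is bookkeeping of the two-sided $F[H]$-module structures so that $\phi$ is simultaneously balanced over $F[H]$ and linear for the left $G$-action; no genuine difficulty arises beyond this.
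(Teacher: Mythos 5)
Your proposal is correct and follows essentially the same route as the paper: the paper's proof simply invokes the canonical isomorphism $\mathbb{C}[G]\otimes_{\mathbb{C}[H]}\mathbb{C}[H]\cong\mathbb{C}[G]$ of left $\mathbb{C}[G]$-modules, which is precisely your multiplication map $\phi$. Your additional verifications (balancedness, $G$-equivariance, bijectivity via the transversal basis or characters) are just the details the paper leaves implicit.
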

\begin{proof}Note that the representation space of $\rho_{reg}$ is $\mathbb{C}[H]$.\;Therefore
$$\mathbf{Ind}_{H}^G\mathbb{C}[H]=\mathbb{C}[G]\otimes_{\mathbb{C}[H]}\mathbb{C}[H]\cong \mathbb{C}[G]$$
is a left $\mathbb{C}[G]$-module isomorphism.\;This gives that $\rho_{reg}^G$ is the left regular representation of $G$.\;
\end{proof}

\subsection{Induced representation and directed strongly regular Cayley graph \texorpdfstring{$\mathcal{C}(N\rtimes_\theta H,N_1\times H)$}{Lg}}
Let $G=N\rtimes_\theta H$,\;$|N|=m$ and $|H|=d$,\;then the (left) transversal for $N$ in $G$ is $H=\{h_1,\cdots,h_d\}$.\;

In this subsection,\;we have a look at the Cayley graph $\mathcal{C}(N\rtimes_\theta H,N_1\times H_1)$,\;where $N_1$ and $H_1$ are subsets of $N$ and $H$ respectively.\;

Let $\varrho=\rho_{reg}$ be the left regular representation of $N$,\;then the representation space with respect to $\rho$ is $\mathbb{C}[N]$.\;Thus $N=\{n_1,n_2,\cdots,n_m\}$ is a $\mathbb{C}$-basis of $\mathbb{C}[N]$,\;and $\varrho_N(g)$ is the matrix of representation $\rho(g)$ with respect to the basis $N$,\;for any $g\in G$.\;

The following lemma gives the adjacent matrix of Cayley graph $\mathcal{C}(N\rtimes_\theta H,N_1\times H_1)$.\;

\begin{lem}\label{l-AdjacentMatrix}Let $\mathcal{C}(N\rtimes_\theta H,N_1\times H_1)$ be the Cayley graph defined above.\;Then
\begin{equation}\label{5.4}
\mathbf{A}(\mathcal{C}(N\rtimes_\theta H,N_1\times H_1))\\
=\left(
    \begin{array}{ccc}
      \varrho_N(h_1^{-1}\overline{N_1}h_1) & \cdots & 0 \\
      \vdots & \ddots & \vdots \\
      0 & \cdots & \varrho_N(h_m^{-1}\overline{N_1}h_m) \\
    \end{array}
  \right)\mathbf{A}(\mathcal{C}(H, H_1^{(-1)}))\otimes I_m.
\end{equation}
\end{lem}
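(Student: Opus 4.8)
The plan is to recognize the adjacency matrix as a value of the regular representation of $G$, and then to split that representation through induction from $N$, so that the semidirect-product structure forces the block pattern in (\ref{5.4}). First I would record the factorization in the group ring: writing an element of $G=N\rtimes_\theta H$ as $(n,h)=(n,e_H)(e_N,h)$ and identifying $N$ with $\{(n,e_H)\}$ and $H$ with $\{(e_N,h)\}$ inside $G$, the generating multiset satisfies $\overline{N_1\times H_1}=\overline{N_1}\cdot\overline{H_1}$ in $\mathbb{C}[G]$, with $\overline{N_1}\in\mathbb{C}[N]$ and $\overline{H_1}\in\mathbb{C}[H]$. Since $\mathbf{A}(\mathcal{C}(G,N_1\times H_1))=\rho_{reg}(\overline{N_1\times H_1})$ and $\rho_{reg}$ is an algebra homomorphism, it suffices to compute $\rho_{reg}(\overline{N_1})$ and $\rho_{reg}(\overline{H_1})$ separately and then multiply them. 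By Lemma \ref{regInduced} the regular representation $\rho_{reg}$ of $G$ is the induction $\varrho^G$ of the regular representation $\varrho$ of $N$; taking $H=\{h_1,\dots,h_d\}$ as a (left) transversal for $N$ in $G$, I can evaluate both factors through the block formula (\ref{inducedReMatrix}), in which the $(i,j)$ block of $\varrho^G(g)$ is $\dot{\varrho}_N(h_i^{-1}gh_j)$.

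The two evaluations are short coset computations that use exactly the two defining features of the semidirect product, namely $N\trianglelefteq G$ and $N\cap H=\{e\}$. For $g=(n,e_H)\in N$ one has $h_i^{-1}(n,e_H)h_j=(h_i^{-1}(n,e_H)h_i)(h_i^{-1}h_j)$, whose first factor lies in $N$ and whose second factor lies in $H$; hence the product lies in $N$ only when $h_i^{-1}h_j\in N\cap H=\{e\}$, i.e.\ when $i=j$. Summing over $n\in N_1$ by linearity, $\varrho^G(\overline{N_1})$ is block diagonal with $i$-th block $\varrho_N(h_i^{-1}\overline{N_1}h_i)$, which is precisely the block-diagonal factor in (\ref{5.4}). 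For $g=(e_N,h)\in H$ the element $h_i^{-1}hh_j$ lies in $H$, so it lies in $N$ iff it equals $e$, i.e.\ iff $h=h_ih_j^{-1}$; consequently the $(i,j)$ block of $\varrho^G(\overline{H_1})$ equals $I_m$ exactly when $h_ih_j^{-1}\in H_1$ and is $0$ otherwise. Reading this permutation pattern off against the defining relation $\mathbf{A}(\mathcal{C}(H,\cdot))=\rho_{reg}(\overline{\cdot})$ identifies $\varrho^G(\overline{H_1})$ with $\mathbf{A}(\mathcal{C}(H,H_1^{(-1)}))\otimes I_m$, the inverse on $H_1$ being the inverse that appears in the arc convention $x\to y\iff yx^{-1}\in S$ of Definition \ref{d-Cayleydigraph}. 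Multiplying the block-diagonal factor by this tensor factor then gives (\ref{5.4}).

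The only genuinely delicate point is orientation bookkeeping: one must keep the row/column (source/target) convention for the adjacency matrix consistent between $G$ and $H$, and must check that the ordering of $G$ induced by the basis $C=\{h_i\otimes n_j\}$ of $\mathbf{Ind}_N^G\mathbb{C}[N]$ (via the isomorphism used in Lemma \ref{regInduced}) matches the ordering of $G$ used to write $\mathbf{A}(\mathcal{C}(G,N_1\times H_1))=\rho_{reg}(\overline{N_1\times H_1})$. This is what fixes both the exact position of the inverse in the $H$-factor and the left-versus-right order of the two factors in the product. Once that convention is pinned down, everything reduces to the two one-line coset computations above, and the factorization in (\ref{5.4}) is forced; I expect this index/orientation check, rather than any representation-theoretic content, to be the main place where care is needed.
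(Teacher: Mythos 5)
Your proposal is correct and follows essentially the same route as the paper's own proof: both identify $\mathbf{A}(\mathcal{C}(G,N_1\times H_1))$ with $\varrho^G_D(\overline{N_1})\varrho^G_D(\overline{H_1})$ via Lemma \ref{regInduced}, then evaluate each factor blockwise through (\ref{inducedReMatrix}), using $N\trianglelefteq G$ and $N\cap H=\{e\}$ to force the block-diagonal shape of $\varrho^G(\overline{N_1})$ and the permutation pattern $\mathbf{A}(\mathcal{C}(H,H_1^{(-1)}))\otimes I_m$ for $\varrho^G(\overline{H_1})$. Your orientation bookkeeping (the inverse on $H_1$ coming from the convention $x\to y\iff yx^{-1}\in S$) matches the paper's $\delta_{ij}$ computation exactly.
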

\begin{proof}The notations are coincided with above.\;It follows from Lemma \ref{regInduced} that $\varrho^G=\rho_{reg}^G$ is the left regular representation of $G$,\;and $D=\{h_1\otimes n_1,\cdots,h_1\otimes n_m,\cdots,h_d\otimes n_1,\cdots,h_d\otimes n_m\}$ is a $\mathbb{C}$-basis of $\mathbf{Ind}_{H}^G\mathbb{C}[N]\cong\mathbb{C}[G]$.\;Note that the basis $D$ can be identified with the elements of $G$,\;hence $\varrho^G_{D}(\overline{N_1\times H_1})$ is an adjacent matrix of Cayley graph $\mathcal{C}(N\rtimes_\theta H, N_1\times H_1)$.\;Therefore
\begin{equation}\label{5.5}
\mathbf{A}(\mathcal{C}(N\rtimes_\theta H,N_1\times H_1))=\varrho^G_{D}(\overline{N_1\times H_1})=\varrho^G_{D}(\overline{N_1})\varrho^G_{D}(\overline{H_1}).
\end{equation}
For any $h\in H_1$,\;Lemma \ref{l-InducedModules} $(3)$ gives that
\begin{equation}\label{5.6}
\begin{aligned}
\varrho^G_{D}(h)&=\left(
    \begin{array}{ccc}
      \dot{\varrho}_N(h_1^{-1}hh_1) & \cdots & \dot{\varrho}_N(h_1^{-1}hh_d) \\
      \vdots & \ddots & \vdots \\
      \dot{\varrho}_N(h_d^{-1}hh_1) & \cdots & \dot{\varrho}_N(h_d^{-1}hh_d) \\
    \end{array}
  \right)\\
&=\left(\begin{array}{ccc}
      \delta_{11}I_m & \cdots & \delta_{1d}I_m \\
      \vdots & \ddots & \vdots \\
      \delta_{d1}I_m & \cdots & \delta_{dd}I_m \\
    \end{array}
  \right)\\
&=\mathbf{A}(\mathcal{C}(H,\{h^{-1}\}))\otimes I_m,
\end{aligned}
\end{equation}
where
 $$\delta_{ij}=\left\{
  \begin{array}{ll}
    1, & \hbox{if $h_jh_i^{-1}=h^{-1}$;} \\
    0, & \hbox{if $h_jh_i^{-1}\neq h^{-1}$.}
  \end{array}
\right.$$
The second equality follows from the fact that $\dot{\varrho}_N(h_i^{-1}hh_j)\neq 0$ if and only if $h_i^{-1}hh_j\in N\cap H=e$,\;i.e.,\;$h_jh_i^{-1}=h^{-1}$.\;In this case,\;$\dot{\varrho}_N(h_i^{-1}hh_j)={\varrho}_N(e)=I_m$.\;

It follows from the equation (\ref{5.5}) that
\begin{equation}\label{5.7}
\begin{aligned}
\varrho^G_{D}(\overline{H_1})&=\sum_{h\in H_1}\varrho^G_{D}(h)=\sum_{h\in H_1}\mathbf{A}(\mathcal{C}(H,\{h^{-1}\}))\otimes I_m\\
&=\mathbf{A}(\mathcal{C}(H, H_1^{(-1)}))\otimes I_m.
\end{aligned}
\end{equation}
Meanwhile,\;we also have
\begin{equation}\label{5.8}
\begin{aligned}
\varrho^G_{D}(\overline{N_1})&=\left(
    \begin{array}{ccc}
      \dot{\varrho}_N(h_1^{-1}\overline{N_1}h_1) & \cdots & \dot{\varrho}_N(h_1^{-1}\overline{N_1}h_d) \\
      \vdots & \ddots & \vdots \\
      \dot{\varrho}_N(h_d^{-1}\overline{N_1}h_1) & \cdots & \dot{\varrho}_N(h_d^{-1}\overline{N_1}h_d) \\
    \end{array}
  \right)\\
&=\left(
    \begin{array}{ccc}
      \varrho_N(h_1^{-1}\overline{N_1}h_1) & \cdots & 0 \\
      \vdots & \ddots & \vdots \\
      0 & \cdots & \varrho_N(h_m^{-1}\overline{N_1}h_m) \\
    \end{array}
  \right).
\end{aligned}
\end{equation}
The second equality follows from the fact that $\dot{\varrho}_N(h_i^{-1}\overline{N_1}h_j)\neq 0$ implies that $h_i^{-1}\overline{N_1}h_j=h_i^{-1}\overline{N_1}h_i(h_i^{-1}h_j)\subseteq N$,\;then $h_i^{-1}h_j\in N\cap H=e$,\;i.e.\;$h_i=h_j$.\;

Therefore,\;(\ref{5.5}),\;(\ref{5.7}) and (\ref{5.8}) show that
\begin{equation}\label{5.9}
\varrho^G_{D}(\overline{N_1\times H_1})=\left(
    \begin{array}{ccc}
      \varrho_N(h_1^{-1}\overline{N_1}h_1) & \cdots & 0 \\
      \vdots & \ddots & \vdots \\
      0 & \cdots & \varrho_N(h_m^{-1}\overline{N_1}h_m) \\
    \end{array}
  \right)\mathbf{A}(\mathcal{C}(H, H_1^{(-1)}))\otimes I_m.
\end{equation}
This completes the proof.\;
\end{proof}
For $H_1=H$,\;we will determine the characteristic polynomial and  minimal polynomial of Cayley graph $\mathcal{C}(N\rtimes_\theta H,N_1\times H)$,\;by using the Cayley multigraph $\mathcal{C}(N,N_1^\flat)$,\;where $N_1^\flat$ is defined in (\ref{musical}).\;Throughout this section,\;let
\[L=\mathbf{A}(\mathcal{C}(N\rtimes_\theta H, N_1\times H))\text{\;and\;}K=\mathbf{A}(\mathcal{C}(N,N_1^\flat))\]
be the adjacent matrices of the Cayley graph $\mathcal{C}(N\rtimes_\theta H, N_1\times H)$ and the Cayley multigraph $\mathcal{C}(N,N_1^\flat)$.\;

\begin{lem}\label{l-CharacteristicMinimum}The characteristic polynomial of the Cayley graph $\mathcal{C}(N\rtimes_\theta H, N_1\times H)$ is
\[\mathcal{F}_L(x)=x^{md-m}\mathcal{F}_{K}(x).\]
The minimal polynomial of the Cayley graph $\mathcal{C}(N\rtimes_\theta H, N_1\times H)$ satisfies
\[\mathcal{M}_{K}(x)|\mathcal{M}_L(x),\mathcal{M}_L(x)|x\mathcal{M}_{K}(x).\]
\end{lem}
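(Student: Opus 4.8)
The plan is to read off from Lemma~\ref{l-AdjacentMatrix} a rank factorisation $L=\mathbf{c}\mathbf{r}$ with $\mathbf{c},\mathbf{r}$ rectangular blocks, to recognise $K$ as the reversed product $\mathbf{r}\mathbf{c}$, and then to derive everything from the general relationship between $PQ$ and $QP$. First I would specialise (\ref{5.4}) to $H_1=H$. Since $H$ is a group, $H_1^{(-1)}=H^{(-1)}=H$, and the connection set $H$ now contains $e_H$, so $\mathcal{C}(H,H)$ is complete with a loop at every vertex and $\mathbf{A}(\mathcal{C}(H,H^{(-1)}))=J_d$, the $d\times d$ all-ones matrix. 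Hence $L=\mathbf{B}(J_d\otimes I_m)$ with $\mathbf{B}=\mathrm{diag}(B_1,\dots,B_d)$ and $B_i=\varrho_N(h_i^{-1}\overline{N_1}h_i)$. Computing the $(i,j)$ block of this product gives $B_i$ independently of $j$, so $L=\mathbf{c}\mathbf{r}$, where $\mathbf{c}$ is the $md\times m$ matrix with blocks $B_1,\dots,B_d$ stacked vertically and $\mathbf{r}=(I_m\;\cdots\;I_m)$ is $m\times md$. Reversing the product yields $\mathbf{r}\mathbf{c}=\sum_{i=1}^d B_i=\varrho_N\!\left(\sum_i h_i^{-1}\overline{N_1}h_i\right)=\varrho_N(\overline{N_1^{\flat}})=K$, after re-indexing $h_i\mapsto h_i^{-1}$ over $H$ and using (\ref{musical}).

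For the characteristic polynomial I would invoke the Sylvester determinant identity: for $\mathbf{c}$ of size $md\times m$ and $\mathbf{r}$ of size $m\times md$ one has $\det(xI_{md}-\mathbf{c}\mathbf{r})=x^{md-m}\det(xI_m-\mathbf{r}\mathbf{c})$, which is precisely $\mathcal{F}_L(x)=x^{md-m}\mathcal{F}_{K}(x)$.

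For the minimal polynomials the two intertwining relations do all the work: $\mathbf{r}L=\mathbf{r}\mathbf{c}\mathbf{r}=K\mathbf{r}$ and $L\mathbf{c}=\mathbf{c}\mathbf{r}\mathbf{c}=\mathbf{c}K$, from which, by induction on the degree, $\mathbf{r}f(L)=f(K)\mathbf{r}$ and $f(L)\mathbf{c}=\mathbf{c}f(K)$ for every polynomial $f$. Taking $f=\mathcal{M}_{K}$ gives $\mathcal{M}_{K}(L)\mathbf{c}=\mathbf{c}\mathcal{M}_{K}(K)=0$, hence $\mathcal{M}_{K}(L)L=\mathcal{M}_{K}(L)\mathbf{c}\mathbf{r}=0$; since $L$ commutes with $\mathcal{M}_{K}(L)$, this says that $x\mathcal{M}_{K}(x)$ annihilates $L$, giving $\mathcal{M}_L\mid x\mathcal{M}_{K}$. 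For the reverse divisibility I would use that $\mathbf{r}=(I_m\;\cdots\;I_m)$ has a right inverse $\mathbf{r}^{+}$, namely the $md\times m$ matrix whose top $m\times m$ block is $I_m$ and whose remaining blocks are zero, so that $\mathbf{r}\mathbf{r}^{+}=I_m$. Taking $f=\mathcal{M}_L$ in $\mathbf{r}f(L)=f(K)\mathbf{r}$ gives $0=\mathbf{r}\mathcal{M}_L(L)=\mathcal{M}_L(K)\mathbf{r}$, and right-multiplying by $\mathbf{r}^{+}$ yields $\mathcal{M}_L(K)=0$, i.e. $\mathcal{M}_{K}\mid\mathcal{M}_L$.

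I expect the main obstacle to be exactly this second divisibility $\mathcal{M}_{K}\mid\mathcal{M}_L$: the generic $PQ$--$QP$ theory only delivers $\mathcal{M}_{K}\mid x\mathcal{M}_L$, and removing the spurious factor of $x$ is where the special shape of $\mathbf{r}$ (its right invertibility, which reflects $H_1=H$ giving a genuine all-ones block row) is essential. The characteristic polynomial identity and the divisibility $\mathcal{M}_L\mid x\mathcal{M}_{K}$ are then routine consequences of the factorisation $L=\mathbf{c}\mathbf{r}$, $K=\mathbf{r}\mathbf{c}$.
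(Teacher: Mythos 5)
Your proposal is correct, and it rests on exactly the same structural facts as the paper's proof: the factorisation $L=\mathbf{c}\mathbf{r}$, $K=\mathbf{r}\mathbf{c}$ with $\mathbf{c}$ the stacked blocks $\varrho_N(h_i^{-1}\overline{N_1}h_i)$ and $\mathbf{r}=\left(I_m\;\cdots\;I_m\right)$, and the Sylvester determinant identity for the characteristic polynomial --- that part is identical to the paper, which carries out the same determinant manipulation without naming the identity. Where you genuinely diverge is the minimal-polynomial step. The paper first observes that $0$ is a root of $\mathcal{F}_L$, hence of $\mathcal{M}_L$, writes $\mathcal{M}_L(x)=x\,m(x)$, computes $\mathcal{M}_L(L)=\mathbf{c}\,m(K)\,\mathbf{r}=0$ from $L^j=\mathbf{c}K^{j-1}\mathbf{r}$, and left-multiplies by $\mathbf{r}$ to conclude $\mathcal{M}_L(K)=K\,m(K)=0$; the divisibility $\mathcal{M}_L\mid x\mathcal{M}_K$ then comes from $L\mathcal{M}_K(L)=\mathbf{c}\,\mathcal{M}_K(K)\,\mathbf{r}=0$, just as in your argument. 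Your intertwining route ($\mathbf{r}f(L)=f(K)\mathbf{r}$ and $f(L)\mathbf{c}=\mathbf{c}f(K)$ for every polynomial $f$, plus the right inverse $\mathbf{r}^{+}$) buys a small but real improvement: it delivers $\mathcal{M}_L(K)=0$ directly, without ever needing $x\mid\mathcal{M}_L(x)$, whereas the paper's reduction $\mathcal{M}_L(x)=x\,m(x)$ silently uses $md-m>0$, i.e.\ $|H|\geqslant 2$ (if $d=1$ the lemma is trivial since $L=K$, but the step as written would not apply). Your closing diagnosis is also accurate: the spurious factor of $x$ inherent in comparing $PQ$ with $QP$ is exactly what both proofs must remove, and it is the full row rank of $\mathbf{r}$ --- a consequence of taking $H_1=H$, so that the block row really is all-ones --- that removes it; the paper's left-multiplication by $\mathbf{r}$ is doing the same job as your $\mathbf{r}\mathbf{r}^{+}=I_m$.
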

\begin{proof}From Lemma \ref{l-AdjacentMatrix} and the fact that $\mathbf{A}(\mathcal{C}(H, H^{(-1)}))=J_d$,\;the equation $(\ref{5.4})$ becomes
\begin{equation*}
\begin{aligned}
L&=\left(
    \begin{array}{ccc}
      \varrho_N(h_1^{-1}\overline{N_1}h_1) & \cdots & 0 \\
      \vdots & \ddots & \vdots \\
      0 & \cdots & \varrho_N(h_d^{-1}\overline{N_1}h_d) \\
    \end{array}
  \right)J_d\otimes I_m\\
&=\left(
    \begin{array}{ccc}
      \varrho_N(h_1^{-1}\overline{N_1}h_1) & \cdots & \varrho_N(h_1^{-1}\overline{N_1}h_1) \\
      \vdots & \ddots & \vdots \\
      \varrho_N(h_d^{-1}\overline{N_1}h_d) & \cdots & \varrho_N(h_d^{-1}\overline{N_1}h_d) \\
    \end{array}
  \right)\\
&=\left(
     \begin{array}{c}
       \varrho_N(h_1^{-1}\overline{N_1}h_1) \\
       \vdots \\
       \varrho_N(h_d^{-1}\overline{N_1}h_d) \\
     \end{array}
   \right)
\left(
  \begin{array}{ccc}
    I_m & \cdots & I_m \\
  \end{array}
\right).
\end{aligned}
\end{equation*}
Then
\begin{equation*}
\begin{aligned}
\mathcal{F}_{L}(x)&=\left|xI_{md}-L\right|\\
&=x^{md-m}\left|xI_{m}-\left(
  \begin{array}{ccc}
    I_m & \cdots & I_m \\
  \end{array}
\right)\left(
     \begin{array}{c}
       \varrho_N(h_1^{-1}\overline{N_1}h_1) \\
       \vdots \\
       \varrho_N(h_d^{-1}\overline{N_1}h_d) \\
     \end{array}
   \right)
\right|\\
&=x^{md-m}\left|xI_{m}-\sum_{j=1}^d\varrho_N(h_j^{-1}\overline{N_1}h_j)\right|.
\end{aligned}
\end{equation*}
Note that
\[\sum\limits_{j=1}^d\varrho_N(h_j^{-1}\overline{N_1}h_j)=\varrho_N(\overline{N_1^\flat})=\mathbf{A}(\mathcal{C}(N,N_1^\flat))=K\]
is the adjacency matrix of Cayley multigraph $\mathcal{C}(N,N_1^\flat)$,\;so
\begin{equation*}
\mathcal{F}_L(x)=x^{md-m}\left|xI_{m}-\varrho_E(\overline{N_1^\flat})\right|=x^{md-m}\mathcal{F}_{K}(x).
\end{equation*}
Observe that
\begin{equation*}
L^j=\left(
     \begin{array}{c}
       \varrho_N(h_1^{-1}\overline{N_1}h_1) \\
       \vdots \\
       \varrho_N(h_d^{-1}\overline{N_1}h_d) \\
     \end{array}
   \right)K^{j-1}
\left(
  \begin{array}{ccc}
    I_m & \cdots & I_m \\
  \end{array}
\right)
\end{equation*}
for any $j\geqslant 1$.\;Note that $0$ is a root of minimal polynomial $\mathcal{M}_L(x)$,\;since $0$ is a root of characteristic polynomial $\mathcal{F}_L(x)$.\;Therefore we can let $\mathcal{M}_L(x)=xm(x)$ for some ploynomial $m(x)$,\;then
\begin{equation*}
0=\mathcal{M}_L(L)=\left(
     \begin{array}{c}
       \varrho_N(h_1^{-1}\overline{N_1}h_1) \\
       \vdots \\
       \varrho_N(h_d^{-1}\overline{N_1}h_d) \\
     \end{array}
   \right)m(K)
\left(
  \begin{array}{ccc}
    I_m & \cdots & I_m \\
  \end{array}
\right).
\end{equation*}
Multiplying $(
  \begin{array}{ccc}
    I_m & \cdots & I_m \\
  \end{array})$ on both sides of the above equation,\;we can get
\begin{equation*}
0=\left(
  \begin{array}{ccc}
    I_m & \cdots & I_m \\
  \end{array}
\right)\mathcal{M}_L(K)=\left(
  \begin{array}{ccc}
    Km(K) & \cdots & Km(K) \\
  \end{array}
\right).
\end{equation*}
This gives that $Km(K)=\mathcal{M}_L(K)=0$ and hence $$\mathcal{M}_{K}(x)|\mathcal{M}_L(x).\;$$
On the other hand,\;we note that
\begin{equation*}
\begin{aligned}
0&=\left(
     \begin{array}{c}
       \varrho_N(h_1^{-1}\overline{N_1}h_1) \\
       \vdots \\
       \varrho_N(h_d^{-1}\overline{N_1}h_d) \\
     \end{array}
   \right)\mathcal{M}_{K}(K)
\left(
  \begin{array}{ccc}
    I_m & \cdots & I_m \\
  \end{array}
\right)=L\mathcal{M}_{K}(L),
\end{aligned}
\end{equation*}
this gives that $\mathcal{M}_L(x)|x\mathcal{M}_{K}(x)$.\;
\end{proof}

\begin{thm}\label{t-Cayleymultidigraph}
 If Cayley graph $\mathcal{C}(N\rtimes_\theta H, N_1\times H)$ is a DSRG with parameters $(md, k, \mu, \lambda, t)$,\;then Cayley multigraph $\mathcal{C}(N,N_1^\flat)$ satisfies $\overline{N_1^\flat}^2=\sigma\overline{N_1^\flat}+\frac{k(k-\sigma)}{m}\overline{N}$ or $\overline{N_1^\flat}=\frac{k-\sigma}{m}\overline{N}+\sigma e_N$.
\end{thm}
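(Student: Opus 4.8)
The plan is to run everything through the reduction already available in Lemma \ref{l-CharacteristicMinimum}, reading off the minimal polynomial of $K=\mathbf{A}(\mathcal{C}(N,N_1^\flat))=\varrho_N(\overline{N_1^\flat})$ from the DSRG hypothesis on $L=\mathbf{A}(\mathcal{C}(N\rtimes_\theta H,N_1\times H))$, and then converting a matrix identity back into $\mathbb{C}[N]$ via the injective algebra map $\varrho_N$ (under which $I_m=\varrho_N(e_N)$ and $J_m=\varrho_N(\overline{N})$, the all-ones matrix). We may assume $d=|H|\geq 2$, so that $md-m>0$. First I would locate the eigenvalue $0$: by Lemma \ref{l-CharacteristicMinimum}, $\mathcal{F}_L(x)=x^{md-m}\mathcal{F}_{K}(x)$, so $0$ is an eigenvalue of $L$. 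Since $L$ is a DSRG, its eigenvalues are $k>\rho>\sigma$ (Proposition \ref{p-2}) with $k=|N_1\times H|>0$, hence $0\in\{\rho,\sigma\}$. By Remark \ref{mini}, $\rho,\sigma$ are the roots of $x^2+(\mu-\lambda)x+(\mu-t)$, so $\rho\sigma=\mu-t$ and $\rho+\sigma=\lambda-\mu$. If $\sigma=0$ then $\mu=t$ and $\rho=\lambda-\mu=\lambda-t<0$ by (\ref{1.2}), contradicting $\rho>\sigma=0$; therefore $\rho=0$, and $\mathcal{M}_L(x)=x(x-k)(x-\sigma)$ with $k,\sigma,0$ pairwise distinct.

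Next I would pin down $\mathcal{M}_K$. Lemma \ref{l-CharacteristicMinimum} gives $\mathcal{M}_K(x)\mid x(x-k)(x-\sigma)$ and $x(x-k)(x-\sigma)\mid x\mathcal{M}_K(x)$; cancelling the factor $x$ in the second relation (legitimate because $k,\sigma\neq 0$) forces $(x-k)(x-\sigma)\mid\mathcal{M}_K(x)$. Hence $\mathcal{M}_K(x)$ is either $(x-k)(x-\sigma)$ or $x(x-k)(x-\sigma)$, and these two possibilities will yield the two alternatives in the statement. I would also record two facts about $K$. It is $k$-regular, because the multiset $N_1^\flat$ has $d|N_1|=k$ elements; thus $K\mathbf{1}=k\mathbf{1}$ and $\mathbf{1}^{T}K=k\mathbf{1}^{T}$, where $\mathbf{1}$ is the all-ones vector. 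Moreover $k$ is a \emph{simple} eigenvalue of $K$: by Proposition \ref{p-2} it has multiplicity $1$ for $L$, and since $k\neq 0$ the factorization $\mathcal{F}_L(x)=x^{md-m}\mathcal{F}_K(x)$ shows $k$ has the same multiplicity for $K$, so $\ker(K-kI_m)=\mathbb{C}\mathbf{1}$.

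I would then finish by a rank-one argument in each case. If $\mathcal{M}_K(x)=(x-k)(x-\sigma)$, then $(K-kI_m)(K-\sigma I_m)=0$ gives $\operatorname{Im}(K-\sigma I_m)\subseteq\ker(K-kI_m)=\mathbb{C}\mathbf{1}$, so $K-\sigma I_m=\mathbf{1}\,w^{T}$ for some row vector $w^{T}$; left-multiplying by $\mathbf{1}^{T}$ and using $\mathbf{1}^{T}(K-\sigma I_m)=(k-\sigma)\mathbf{1}^{T}$ together with $\mathbf{1}^{T}\mathbf{1}=m$ yields $w^{T}=\tfrac{k-\sigma}{m}\mathbf{1}^{T}$, so $K=\sigma I_m+\tfrac{k-\sigma}{m}J_m$; applying $\varrho_N^{-1}$ gives $\overline{N_1^\flat}=\sigma e_N+\tfrac{k-\sigma}{m}\overline{N}$. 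If $\mathcal{M}_K(x)=x(x-k)(x-\sigma)$, I apply the same idea to $P=K(K-\sigma I_m)=K^2-\sigma K$: from $(K-kI_m)P=0$ we get $\operatorname{Im}P\subseteq\mathbb{C}\mathbf{1}$, so $P=\mathbf{1}\,u^{T}$, and comparing column sums via $\mathbf{1}^{T}K^2=k^2\mathbf{1}^{T}$ gives $P=\tfrac{k(k-\sigma)}{m}J_m$, i.e.\ $\overline{N_1^\flat}^{\,2}=\sigma\overline{N_1^\flat}+\tfrac{k(k-\sigma)}{m}\overline{N}$ after applying $\varrho_N^{-1}$.

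The step I expect to be the crux—and the only place where the full DSRG hypothesis on $L$ is genuinely used rather than just the shape of $\mathcal{M}_K$—is the simplicity of the eigenvalue $k$ for $K$. Without $\ker(K-kI_m)=\mathbb{C}\mathbf{1}$, the image inclusions above would not force a rank-one matrix and neither displayed identity need hold (indeed a second $k$-eigenvector orthogonal to $\mathbf{1}$ would already violate the quadratic relation). Everything else is routine bookkeeping with characteristic and minimal polynomials, the $k$-regularity of $K$, and column sums.
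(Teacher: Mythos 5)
Your proof is correct and follows the same overall route as the paper: use Lemma \ref{l-CharacteristicMinimum} to get $\mathcal{M}_L(x)=x(x-k)(x-\sigma)$, deduce $\mathcal{M}_K(x)\in\{(x-k)(x-\sigma),\,x(x-k)(x-\sigma)\}$, and in each case force the relevant polynomial in $K$ to equal a multiple of $J_m$ by a column-sum/rank-one argument, then pull back along $\varrho_N$. Two of your steps are, however, tighter than the paper's. First, the paper simply asserts ``$\rho=0$ and $\sigma<0$''; you actually rule out the alternative $\sigma=0$ using $\rho\sigma=\mu-t$, $\rho+\sigma=\lambda-\mu$ and the inequality $\lambda<t$ from (\ref{1.2}). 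Second, and more substantively, the paper justifies $\dim\ker(K-kI_m)=1$ by appealing to Perron--Frobenius theory; strictly speaking that requires irreducibility of $K$, i.e.\ strong connectivity of the multigraph $\mathcal{C}(N,N_1^\flat)$, which is neither verified nor automatic (a priori $N_1^\flat$ need not generate $N$, and a $k$-regular disconnected multigraph has $k$ as a multiple eigenvalue). Your replacement --- $k$ has algebraic multiplicity $1$ in $\mathcal{F}_L$ by Proposition \ref{p-2}, hence multiplicity $1$ in $\mathcal{F}_K$ because $k\neq 0$ and $\mathcal{F}_L(x)=x^{md-m}\mathcal{F}_K(x)$, so the geometric multiplicity is $1$ and $\ker(K-kI_m)=\mathbb{C}\mathbf{1}$ --- closes this gap using only data already established, and is the right way to obtain simplicity here. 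Your explicit remark that $d\geqslant 2$ is needed (so that $0$ really is an eigenvalue of $L$, forcing $0\in\{\rho,\sigma\}$) also makes precise an assumption the paper leaves implicit in the semidirect-product setting.
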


\begin{proof}Suppose Cayley graph $\mathcal{C}(N\rtimes_\theta H, N_1\times H)$ is a DSRG with parameters $(md, k, \mu, \lambda, t)$,\;then this Cayley graph $\mathcal{C}(N\rtimes_\theta H, N_1\times H)$ has $3$ distinct eigenvalues $k$,\;$\rho$ and $\sigma$.\;We can get $\rho=0$ and $\sigma<0$ by Lemma \ref{l-CharacteristicMinimum}.\;Then from Remark \ref{mini},
\begin{equation*}
\mathcal{M}_L(x)=x(x-k)(x-\sigma).
\end{equation*}
It follows from Lemma \ref{l-CharacteristicMinimum} that
\begin{equation*}
\mathcal{M}_{K}(x)=(x-k)(x-\sigma)\;\;\text{or}\;\;x(x-k)(x-\sigma).
\end{equation*}
\textbf{Case 1}.\;If $\mathcal{M}_{K}(x)=(x-k)(x-\sigma)$,\;then
$$(K-kI_m)(K-\sigma I_m)=0.$$ Let $X=K-\sigma I_m$,\;then $X$ is a nonnegative matrix and $KX=kX$.\;Therefore each column of $X$ is an eigenvector corresponding to simple eigenvalue $k$ of $K$ (from the Perron-Frobenius theory,\;can see, e.g.,\;Horn and Johnson\cite{RA}),\;but the eigenspace associated with the eigenvalue $k$ has
dimension one and hence each column of $X$ is a suitable multiple of $\mathbf{1}_{m}$,\;where $I_m$ is all-ones vector.\;Therefore,\;there are some integers $b_1,b_2,\cdots,b_m$ such that $X=(b_1\mathbf{1}_{m}, b_2\mathbf{1}_{m}, \cdots, b_{m}\mathbf{1}_{m})$,\;then $\mathbf{1}_{m}^TX=\mathbf{1}_{m}^T(K-\sigma I_m)=(k-\sigma)\mathbf{1}_{m}^T$.\;This shows that ${m}b_1={m}b_2=\cdots={m}b_m=k-\sigma$ and hence
$$X=K-\sigma I_m=\frac{k-\sigma}{m}J_m,\;\text{i.e.,\;}\overline{N_1^\flat}=\frac{k-\sigma}{m}\overline{N}+\sigma e_N.$$
\textbf{Case 2}.\;If $\mathcal{M}_{K}(x)=x(x-k)(x-\sigma)$,\;then
$$K(K-kI_m)(K-\sigma I_m)=0.$$Let $Y=K(K-\sigma I_m)$,\;then $Y$ is a nonnegative matrix and $KY=kY$.\;Similarly,\;we can also obtain that $Y=K(K-\sigma I_m)=\frac{k(k-\sigma)}{m}J_m$,\;so $$K^2=\sigma K+\frac{k(k-\sigma)}{m}J_m.$$
This gives $\overline{N_1^\flat}^2=\sigma\overline{N_1^\flat}+\frac{k(k-\sigma)}{m}\overline{N}.$
\end{proof}

The above lemma motivates us to get the following constructions of DSRGs,\;which generalize the earlier constructions of Art M. Duval and Dmitri Iourinski in \cite{D}.\;
\begin{con}\label{c-Generalsemidirect}Let $\mathcal{C}(N\rtimes_\theta H, N_1\times H)$ be a Cayley graph such that $$\overline{N_1^\flat}=x_1\overline{N}+x_2 e_N$$ for some integers $x_1,x_2$ with $d>x_1>0$ and $x_2<0$,\;then the
Cayley graph $\mathcal{C}(N\rtimes_\theta H, N_1\times H)$ is a DSRG with parameters $$\left(md, x_1m+x_2, \frac{x_1(x_1m+x_2)}{d}, x_2+ \frac{x_1(x_1m+x_2)}{d},  \frac{x_1(x_1m+x_2)}{d}\right).$$
\end{con}
\begin{rmk}The necessary conditions on the parameters of DSRG assert that $0<x_1<d$ and $x_2<0$.\;
\end{rmk}
\begin{proof}The assumption on $N_1^\flat$ implies that $K=x_1J_m+x_2 I_m$,\;then
\begin{equation*}
\mathcal{M}_{K}(x)=(x-(x_1m+x_2))(x-x_2),
\end{equation*}
where $x_1m+x_2=k=|N_1||H|\neq 0$.\;Recall that $L=\mathbf{A}(\mathcal{C}(N\rtimes_\theta H, N_1\times H))$,\;then $\mathcal{M}_L(x)=x(x-k)(x-x_2)$ by Lemma \ref{l-CharacteristicMinimum}.\;Similar to the proof of $\mathbf{Case}$ $2$ of Theorem \ref{t-Cayleymultidigraph},\;we can obtain that $L(L-x_2I)=\frac{k(k-x_2)}{m}J$,\;then
\begin{equation*}
\begin{aligned}
L^2&=\frac{k(k-x_2)}{md}J+x_2L=\frac{x_1(x_1m+x_2)}{d}J+x_2L\\
&=\frac{x_1(x_1m+x_2)}{d}I+\left(x_2+\frac{x_1(x_1m+x_2)}{d}\right)L+\frac{x_1(x_1m+x_2)}{d}(J-I-L).\\
\end{aligned}
\end{equation*}
Thus Cayley graph $\mathcal{C}(N\rtimes_\theta H, N_1\times H)$ is a DSRG with parameters $$\left(md, x_1m+x_2, \frac{x_1(x_1m+x_2)}{d}, x_2+ \frac{x_1(x_1m+x_2)}{d},  \frac{x_1(x_1m+x_2)}{d}\right).$$
\end{proof}
We now give some applications of Construction \ref{c-Generalsemidirect}.\;
\begin{cor}Let $N$ be a finite group of order $m$.\;Let $\beta \in \mathbf{Aut}(N)$ have the $q$-orbit condition.\;Let $H$ be the cyclic group of order $q$ with generator $b$,\;and define $\theta \colon H \rightarrow \mathbf{Aut}(N)$ by $\theta(b^{u})=\beta^{u}$.\;Let $\mathcal{O}_1,\mathcal{O}_2,\cdots,\mathcal{O}_s$ be all the distinct nontrivial $H$-orbits of $N$.\;

Let $r$ be an integer such that $r<q$,\;and let $\widetilde{\mathcal{O}}_i$ be a subset of $\mathcal{O}_i$ with $|\widetilde{\mathcal{O}}_i|=r$ for $1\leqslant i\leqslant s$.\;Let $N_1=\bigcup_{i=1}^s\widetilde{\mathcal{O}}_i$,\;then the Cayley graph $\mathcal{C}(N \rtimes_{\theta} H,N_1 \times H)$ is a DSRG with the parameters $$\left(mq,(m-1)r,\frac{(m-1)r^2}{q},\frac{(m-1)r^2}{q}-r,\frac{(m-1)r^2}{q}\right).$$
\end{cor}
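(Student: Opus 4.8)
The plan is to recognize this corollary as a direct application of Construction~\ref{c-Generalsemidirect}: I only need to verify that the chosen set $N_1$ satisfies $\overline{N_1^\flat}=x_1\overline{N}+x_2e_N$ for suitable integers $x_1,x_2$ meeting the hypotheses $d>x_1>0$ and $x_2<0$, and then substitute into the parameter list. Thus the entire argument reduces to one explicit computation of $\overline{N_1^\flat}$.

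First I would unwind the definition (\ref{musical}), writing
\[
\overline{N_1^\flat}=\sum_{h\in H}h\,\overline{N_1}\,h^{-1}=\sum_{h\in H}\sum_{n\in N_1}(h\circ n),
\]
where $h\circ n=\theta(h)(n)$ is the action introduced before (\ref{musical}). Since $\theta(b^u)=\beta^u$ and $\beta$ satisfies the $q$-orbit condition, the order of $\beta$ is exactly $q=|H|$, so $\theta$ is injective and the $H$-action on $N$ is precisely the $\langle\beta\rangle$-action whose nontrivial orbits are $\mathcal{O}_1,\dots,\mathcal{O}_s$, each of cardinality $q$.

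The key step is to convert the inner sum into an orbit sum. For any $n$ in a nontrivial orbit $\mathcal{O}_i$, the orbit–stabilizer theorem together with $|H\circ n|=|\mathcal{O}_i|=q=|H|$ forces the stabilizer of $n$ in $H$ to be trivial, so $h\mapsto h\circ n$ is a bijection from $H$ onto $\mathcal{O}_i$ and hence $\sum_{h\in H}(h\circ n)=\overline{\mathcal{O}_i}$. Summing over the $r$ elements of $\widetilde{\mathcal{O}}_i$ gives $r\,\overline{\mathcal{O}_i}$, and summing over $i$, using that the $q$-orbit condition rules out nontrivial fixed points so that $\bigcup_{i=1}^s\mathcal{O}_i=N\setminus\{e_N\}$, yields
\[
\overline{N_1^\flat}=r\sum_{i=1}^s\overline{\mathcal{O}_i}=r(\overline{N}-e_N)=r\,\overline{N}-r\,e_N.
\]
I expect this triviality-of-stabilizers step to be the only genuine obstacle; everything after it is bookkeeping.

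Finally I would set $x_1=r$ and $x_2=-r$, with $d=|H|=q$. The hypothesis $r<q$ (and $r\geqslant1$, needed for $\widetilde{\mathcal{O}}_i$ to be a nonempty set of size $r$) gives $d>x_1>0$, and plainly $x_2=-r<0$, so Construction~\ref{c-Generalsemidirect} applies. Substituting $x_1=r$, $x_2=-r$, $d=q$ into its parameter list produces $n=mq$, $k=rm-r=(m-1)r$, the common value $\tfrac{x_1(x_1m+x_2)}{d}=\tfrac{r\cdot(m-1)r}{q}=\tfrac{(m-1)r^2}{q}$ for both $\mu$ and $t$, and $\lambda=x_2+\tfrac{(m-1)r^2}{q}=\tfrac{(m-1)r^2}{q}-r$, which is exactly the asserted parameter set. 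Note that the original theorem of Duval and Iourinski (Theorem~\ref{semidirectDSRG}) is recovered as the special case $r=q-1$ combined with taking $N_1$ a transversal of the orbits, so this corollary strictly enlarges their family.
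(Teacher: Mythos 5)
Your proof is correct and follows essentially the same route as the paper: compute $\overline{N_1^\flat}=r\overline{N}-re_N$ and then invoke Construction~\ref{c-Generalsemidirect} with $x_1=r$, $x_2=-r$, $d=q$; the paper performs the orbit computation compactly in its multiset notation ($N_1^\flat=r\oplus(N\setminus\{e_N\})$), while your trivial-stabilizer argument via orbit--stabilizer just makes the same computation explicit. One correction to your closing aside: the Duval--Iourinski theorem (Theorem~\ref{semidirectDSRG}) is recovered as the special case $r=1$ (a transversal contains one element per orbit), not $r=q-1$; with $r=q-1$ the degree would be $(m-1)(q-1)$ rather than $m-1$, and indeed the paper's own remark identifies $r=1$ as the relevant case.
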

\begin{proof}We note that
\begin{equation*}
N_1^\flat=\biguplus_{i=1}^s\widetilde{\mathcal{O}}_i^{\flat}=\biguplus_{i=1}^sr\oplus\widetilde{\mathcal{O}}_i=r\oplus(N\setminus \{e_N\}),
\end{equation*}
so $\overline{N_1^\flat}=r\overline{N}-re_N$.\;From Construction \ref{c-Generalsemidirect} with $x_1=r,x_2=-r$,\;$\mathcal{C}(N \rtimes_{\theta} H,N_1 \times H)$ is a DSRG with parameters
$\left(mq,(m-1)r,\frac{(m-1)r^2}{q},\frac{(m-1)r^2}{q}-r,\frac{(m-1)r^2}{q}\right)$.
\end{proof}
\begin{rmk}For $r=1$,\;the parameters become $$\left(mq,m-1,\frac{m-1}{q},\frac{m-1}{q}-1,\frac{m-1}{q}\right),\;$$this is the semidirect product constructions of Art M.\;Duval and Dmitri Iourinski in \cite{D}.\;
\end{rmk}

The following corollary shows that the $q$-obrit condition is not necessary.
\begin{cor}\label{c-5.13}Let $G=N\rtimes_\theta H$ be the semidirect product of $N$ and $H$ with respect to the homomorphism $\theta$.\;Let $\mathcal{O}_1=\{e_N\},\mathcal{O}_2,\cdots,\mathcal{O}_s$ be all the  distinct $H$-orbits of $N$,\;and $d_i=|\mathcal{O}_i|$ for $1\leqslant i\leqslant s$.\;Suppose $d_1\leqslant d_2\leqslant\cdots\leqslant d_s$ and $1<d_2|d_i$ for any $2\leqslant i\leqslant s$.\;

Let $r$ be an integer such that $r<d_2$,\;and let $\widetilde{\mathcal{O}}_i$ be a subset of $\mathcal{O}_i$ with $|\widetilde{\mathcal{O}}_i|=\frac{rd_i}{d_2}<d_i$ for $2\leqslant i\leqslant s$.\;Let $N_1=\bigcup_{i=2}^s\widetilde{\mathcal{O}}_i$.\;Then the Cayley graph $\mathcal{C}(N\rtimes_\theta H,  N_1 \times H)$ is a DSRG with parameters
$$\left(|G|,\frac{r|H|(|N|-1)}{d_2},\frac{r^2|H|(|N|-1)}{d_2^2},\frac{r^2|H|(|N|-1)}{d_2^2}-\frac{r|H|}{d_2},\frac{r^2|H|(|N|-1)}{d_2^2}\right).$$
\end{cor}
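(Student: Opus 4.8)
The plan is to reduce the corollary entirely to Construction \ref{c-Generalsemidirect} by computing the group-ring element $\overline{N_1^\flat}$ explicitly and showing that it has the required shape $x_1\overline{N}+x_2e_N$. First I would unwind the definition (\ref{musical}): since conjugation by $h\in H$ realizes the action $\theta(h)$, we have $h\overline{N_1}h^{-1}=\sum_{n\in N_1}\theta(h)(n)$, and therefore
\[\overline{N_1^\flat}=\sum_{h\in H}h\overline{N_1}h^{-1}=\sum_{n\in N_1}\sum_{h\in H}\theta(h)(n).\]

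The key computation is an orbit-stabilizer count. For a fixed $n\in\mathcal{O}_i$, the map $h\mapsto\theta(h)(n)$ is a surjection of $H$ onto the orbit $\mathcal{O}_i$ whose fibers are the cosets of $\mathrm{Stab}_H(n)$, which has order $|H|/d_i$. Hence $\sum_{h\in H}\theta(h)(n)=\frac{|H|}{d_i}\overline{\mathcal{O}_i}$, independently of which representative $n\in\mathcal{O}_i$ is chosen. Summing over the $|\widetilde{\mathcal{O}}_i|=rd_i/d_2$ elements of $\widetilde{\mathcal{O}}_i$ contributes $\frac{rd_i}{d_2}\cdot\frac{|H|}{d_i}\overline{\mathcal{O}_i}=\frac{r|H|}{d_2}\overline{\mathcal{O}_i}$. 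The point is that the coefficient $\frac{r|H|}{d_2}$ no longer depends on $i$; this is precisely why the sizes $|\widetilde{\mathcal{O}}_i|$ were chosen proportional to $d_i$. Summing over $i=2,\dots,s$ and using that $\mathcal{O}_1=\{e_N\},\dots,\mathcal{O}_s$ partition $N$, so that $\bigcup_{i=2}^s\mathcal{O}_i=N\setminus\{e_N\}$, I obtain
\[\overline{N_1^\flat}=\frac{r|H|}{d_2}\sum_{i=2}^s\overline{\mathcal{O}_i}=\frac{r|H|}{d_2}\left(\overline{N}-e_N\right).\]

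Finally I would invoke Construction \ref{c-Generalsemidirect} with $x_1=\frac{r|H|}{d_2}$ and $x_2=-\frac{r|H|}{d_2}$. I must verify the hypotheses: $x_2<0$ is immediate from $0<r$, while $0<x_1<|H|=d$ follows from $0<r<d_2$ together with $d_2\mid|H|$. This last divisibility is itself an instance of orbit-stabilizer (since $d_i=|H|/|\mathrm{Stab}_H(n)|$ divides $|H|$), and it simultaneously guarantees that $x_1$ and $x_2$ are genuine integers. Substituting these values of $x_1,x_2$, together with $m=|N|$ and $d=|H|$, into the parameter formula of Construction \ref{c-Generalsemidirect} and simplifying yields exactly the stated parameters; for instance $k=x_1m+x_2=\frac{r|H|}{d_2}(|N|-1)$ and $\mu=t=\frac{x_1k}{|H|}=\frac{r^2|H|(|N|-1)}{d_2^2}$, with $\lambda=x_2+\mu$.

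The only genuine obstacle is the orbit-stabilizer bookkeeping: one must confirm that each orbit element is counted with the correct multiplicity $|H|/d_i$ and check that the proportional choice $|\widetilde{\mathcal{O}}_i|=rd_i/d_2$ collapses the $i$-dependence of the coefficients. Once $\overline{N_1^\flat}$ is in the form $x_1\overline{N}+x_2e_N$, the remainder is a mechanical substitution into the previously established construction, so I do not expect any further difficulty.
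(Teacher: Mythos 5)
Your proposal is correct and follows essentially the same route as the paper: both compute $\overline{N_1^\flat}$ by an orbit--stabilizer count, observing that the proportional choice $|\widetilde{\mathcal{O}}_i|=rd_i/d_2$ makes every orbit contribute with the uniform coefficient $\frac{r|H|}{d_2}$, so that $\overline{N_1^\flat}=\frac{r|H|}{d_2}(\overline{N}-e_N)$, and then invoke Construction \ref{c-Generalsemidirect} with $x_1=\frac{r|H|}{d_2}$, $x_2=-\frac{r|H|}{d_2}$. Your explicit verification of the hypotheses $0<x_1<|H|$ and $x_2<0$ is a detail the paper leaves implicit, but it is the same argument.
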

\begin{proof}Note that
\begin{equation*}
\begin{aligned}
N_1^\flat&=\biguplus_{i=2}^s\widetilde{\mathcal{O}}_i^\flat=\biguplus_{i=2}^s\left(\frac{|H|}{d_i}\frac{rd_i}{d_2}\right)\oplus\mathcal{O}_i=\frac{r|H|}{d_2}\oplus\left(\biguplus_{i=2}^s\mathcal{O}_i\right)=\frac{r|H|}{d_2}\oplus(N\setminus\{e_N\}).
\end{aligned}
\end{equation*}
Hence $\overline{N_1^\flat}=\frac{r|H|}{d_2}(\overline{N}-e_N)$.\;Then this result follows from Construction \ref{c-Generalsemidirect} with $x_1=\frac{r|H|}{d_2},x_2=-\frac{r|H|}{d_2}$.
\end{proof}
There are some groups $N$,\;$H$ and homomorphism $\theta$ satisfy the conditions of Corollary \ref{c-5.13}.\;

\begin{exmp}If $\beta_i \in \mathbf{Aut}(N_i)$ has the $q_i$-orbit condition for $i=1,\cdots,l$,\;and $q_i|q_{i+1}$ for each $i=1,\cdots,l-1$,\;then
we define
\[\beta=\prod_{j=1}^l\beta_i\in \mathbf{Aut}\left(\prod_{j=1}^lN_j\right).\]
Let $H$ be the cyclic group of order $q_l$ with generator $b$,\;and define $\theta \colon H \rightarrow \mathbf{Aut}\left(\prod_{j=1}^lN_j\right)$ by $\theta(b^{u})=\beta^{u}$.\;Assume $(n_1,n_2,\cdots,n_l)\neq e$ is a representative of an untrivial $H$-orbit $\mathcal{O}$.\;Let $|\mathcal{O}|=r$.\;There is a largest $j$ such that $n_{j}\neq e_{N_{j}}$.\;Therefore $\beta^{q_{j}}(n_1,n_2,\cdots,n_l)=e$ and $r|q_j$.\;On the other hand,\;note that $\beta_j^{r}(n_j)=e_{N_{j}}$,\;it follows that $q_j|r$.\;Therefore $r=q_j$.\;This gives that the length of each untrivial $H$-orbit belongs to the set $\{q_1,q_2,\cdots,q_l\}$.\;Observe that there exist $H$-orbits of size $q_1$ and $q_l$,\;then from Corollary \ref{c-5.13},\;we can construct DSRGs with parameters
$$\left(mq_l,\frac{rq_l(m-1)}{q_1},\frac{r^2q_l(m-1)}{q_1^2},\frac{r^2q_l(m-1)}{q_1^2}-\frac{rq_l}{q_1},\frac{r^2q_l(m-1)}{q_1^2}\right)$$
for any $r<q_1$,\;where $m=\prod_{j=1}^l|N_j|$.\;We give a simple example.\;

Considering the direct product of two cyclic groups $C_3\times C_5$,\;where $C_3=\langle a_1\rangle$ and $C_5=\langle a_2\rangle$.\;Defining $\beta\in \mathbf{Aut}(C_3\times C_5)$ by
$\beta(a_1)=a_1^2$ and $\beta(a_2)=a_2^2$.\;Then $\beta^4$ is the identity automorphism of $C_3\times C_5$.\;Let $C_4$ be the cyclic group of order $4$ with generator $b$,\;and define $\theta: C_4 \rightarrow \mathbf{Aut}(C_3\times C_5)$ by $\theta(b^{u})=\beta^{u}$.\;Then
\[(C_3\times C_5)\rtimes_\theta C_4=\langle a_1,a_2,b:a_1^3=a_2^5=b^4=e,a_1a_2=a_2a_1,ba_1=a_1^2b,ba_2=a_2^2b\rangle.\]
The untrivial $C_4$-orbits are \[\{a_1,a_1^2\},\;\{a_2,a_2^2,a_2^3,a_2^4\},\;\{a_1a_2,a_1^2a_2^2,a_1a_2^4,a_1^2a_2^3\},\; \{a_1a_2^2,a_1^2a_2^4,a_1a_2^3,a_1^2a_2\}.\]
Therefore $C_3\times C_5$,\;$C_4$ and homomorphism $\theta$ satisfy the conditions in Corollary \ref{c-5.13}.\;Let $S=\{a_1,a_2,a_2^2,a_1a_2,a_1^2a_2^2,a_1a_2^2,a_1^2a_2^4\}$,\;then the Cayley graph $\mathcal{C}((C_3\times C_5)\rtimes_\theta C_4, S\times C_4)$ is a DSRG with parameters
$(60,28,14,12,14)$.\;
\end{exmp}

\begin{exmp}
Before describing another example,\;we need a lemma and we can see it in \cite{IS},\;p.71,\;Corollary 3.4.\;\\
$\mathbf{{Lemma}.}$\;\;Let $P$ be an abelian $p$-subgroup of $\mathbf{Aut}(G)$,\;where $G$ is a finite group of order not divisible by the prime $p$.\;Then $P$ has a regular orbit (i.e.\;a orbit of length $|P|$) on $G$.

Considering $N=C_{133}=\langle x|x^{133}=1\rangle$,\;the automorphism group of $N=C_{133}$ is isomorphic to $\mathbb{Z}_{133}^\ast$.\;i.e.,\;
\begin{equation*}
\mathbf{Aut}(C_{133})=\{\gamma_{s}|(s,133)=1,0\leqslant s\leqslant 132\},|\mathbf{Aut}(C_{133})|=108,
\end{equation*}
where $\gamma_{s}$ is defined by $\gamma_{s}(x^i)=x^{si}$.\;$\mathbf{Aut}(C_{133})$ has an abelian Sylow $3$-subgroup
\begin{equation*}
\begin{aligned}
H=&\{\gamma_{s}|s=1,4,9,11,16,23,25,30,36,39,43,44,58,64,74,81,85,92,93,99,\\
  &100,102,106,120,121,123,130\}
\end{aligned}
\end{equation*}
of order $27$.\;Define the semidirect product of $C_{133}$ and $H$ by $C_{133}\rtimes H=\{x^i\gamma|x^i\in C_{133},\gamma\in H\}$,\;where  multiplication is defined by $(x^i\gamma)(x^j\gamma')=x^i\gamma(x^j)\gamma\gamma'$.

Let $\mathcal{O}_1=\{e_N\},\mathcal{O}_2,\cdots,\mathcal{O}_s$ be all the distinct $H$-orbits.\;From definition of $H$,\;the $H$-orbit of length $1$ is only $\mathcal{O}_1=\{e_N\}$ (Consider elements $s=92$ and $s=93$).\;From the above Lemma,\;there is an $H$-orbit of length $27$ at least,\;but $27\nmid 133-1=132$,\;$9\nmid 133-1=132$ and $3|132$,\;so there is an $H$-orbit of length $3$ at least,\;then $d_2=3$.\;From Corollary \ref{c-5.13},\;we can construct the DSRGs with parameters $(3591,1188r,396r^2,396r^2-9r,396r^2)$ for $r=1,2$.
\end{exmp}
\section{Directed strongly regular Cayley graphs on dihedral groups}

\subsection{Preliminary}
We recall $C_n=\langle x\rangle$ is a cyclic multiplicative group of order $n$.\;Dihedral groups $D_n$ is a semidirect product of two cyclic groups $C_n=\langle x\rangle$ of order $n$ and $C_2=\langle a\rangle$ of order $2$.\;Any subset $S$ of $D_n$ can be written by the form $S=X\cup Ya$ with $X,Y\subseteq C_n$.\;

We now give a characterization of the Cayley graph $\mathcal{C}(D_n,X\cup Ya)$ to be directed strongly regular.\;

\begin{lem}\label{l-generaldihedrantcharacterization}The Cayley graph $\mathcal{C}(D_n,X\cup Ya)$ is a DSRG with parameters $(2n,|X|+|Y|, \mu, \lambda, t)$ if and only if $X$ and $Y$ satisfy the following conditions:
\begin{flalign}
(i)\;&(\overline{X}+\overline{X^{(-1)}})\overline{Y}=(\lambda-\mu)\overline{Y}+\mu\overline{C_n};\hspace{200pt}\label{6.1}\\
(ii)\;&\overline{X}^2+\overline{Y}\;\overline{Y^{(-1)}}=(t-\mu)e+(\lambda-\mu)\overline{X}+\mu\overline{C_n}.\label{6.2}
\end{flalign}
\end{lem}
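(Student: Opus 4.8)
The plan is to apply the group-ring criterion of Lemma \ref{l-DSRGCayleyGraphsGruopRing} directly, with the ambient group $G=D_n$ (so $|G|=2n$) and generating set $S=X\cup Ya$, where $X,Y\subseteq C_n$. Since $X\subseteq C_n$ and $Ya\subseteq C_na$ are disjoint, we have $\overline{S}=\overline{X}+\overline{Y}a$ and $|S|=|X|+|Y|$, so the parameter count $(2n,|X|+|Y|)$ is immediate and the whole statement reduces to the single identity $\overline{S}^2=te+\lambda\overline{S}+\mu(\overline{D_n}-e-\overline{S})$. The decisive structural observation is that $\mathbb{Z}[D_n]=\mathbb{Z}[C_n]\oplus\mathbb{Z}[C_n]a$ as a direct sum of $\mathbb{Z}$-modules, so an identity in $\mathbb{Z}[D_n]$ holds if and only if its $\mathbb{Z}[C_n]$-component and its $\mathbb{Z}[C_n]a$-component each hold. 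The goal is therefore to expand both sides, sort each into these two components, and read off (\ref{6.1}) and (\ref{6.2}); the equivalence is then automatically an ``if and only if'' because the two conditions simply recombine into the one group-ring identity.

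The computational engine is the commutation rule coming from $ax=x^{-1}a$: for any $Z\subseteq C_n$ one has $a\overline{Z}=\overline{Z^{(-1)}}a$, together with $a^2=e$. First I would expand $\overline{S}^2=(\overline{X}+\overline{Y}a)^2$ into its four terms. The two ``even'' terms give $\overline{X}^2$ and $\overline{Y}a\,\overline{Y}a=\overline{Y}(a\overline{Y})a=\overline{Y}\,\overline{Y^{(-1)}}a^2=\overline{Y}\,\overline{Y^{(-1)}}$, both landing in $\mathbb{Z}[C_n]$. The two ``odd'' terms give $\overline{X}\,\overline{Y}a$ and $\overline{Y}a\,\overline{X}=\overline{Y}\,\overline{X^{(-1)}}a$, and using commutativity of $\mathbb{Z}[C_n]$ they combine to $(\overline{X}+\overline{X^{(-1)}})\overline{Y}\,a$, which lies in $\mathbb{Z}[C_n]a$. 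Thus $\overline{S}^2=\bigl(\overline{X}^2+\overline{Y}\,\overline{Y^{(-1)}}\bigr)+(\overline{X}+\overline{X^{(-1)}})\overline{Y}\,a$.

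Next I would expand the right-hand side using $\overline{D_n}=\overline{C_n}+\overline{C_n}a$. Its $\mathbb{Z}[C_n]$-component collects to $(t-\mu)e+(\lambda-\mu)\overline{X}+\mu\overline{C_n}$, and its $\mathbb{Z}[C_n]a$-component collects to $\bigl[(\lambda-\mu)\overline{Y}+\mu\overline{C_n}\bigr]a$. Matching the $\mathbb{Z}[C_n]a$-components yields condition $(i)$, i.e.\ (\ref{6.1}), and matching the $\mathbb{Z}[C_n]$-components yields condition $(ii)$, i.e.\ (\ref{6.2}); since each step is an equivalence under the direct-sum decomposition, the converse direction is free.

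I do not expect a genuine obstacle here, as the argument is essentially bookkeeping once the $\mathbb{Z}[C_n]\oplus\mathbb{Z}[C_n]a$ splitting is invoked. The one point demanding care is the inversion twist $a\overline{Z}=\overline{Z^{(-1)}}a$: it is precisely this twist that converts $\overline{Y}$ into $\overline{Y^{(-1)}}$ in the even term of (\ref{6.2}) and produces the symmetrized factor $\overline{X}+\overline{X^{(-1)}}$ in (\ref{6.1}). Applying it consistently, and keeping track of which of the four product terms lands in which coset, is the only place an error could creep in.
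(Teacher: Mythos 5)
Your proposal is correct and follows essentially the same route as the paper: both apply Lemma \ref{l-DSRGCayleyGraphsGruopRing} to $S=X\cup Ya$, expand $(\overline{X}+\overline{Y}a)^2$ via the twist $a\overline{Z}=\overline{Z^{(-1)}}a$ to get $\overline{X}^2+\overline{Y}\,\overline{Y^{(-1)}}+(\overline{X}+\overline{X^{(-1)}})\overline{Y}a$, and match against $(t-\mu)e+(\lambda-\mu)\overline{X}+\mu\overline{C_n}+\bigl[(\lambda-\mu)\overline{Y}+\mu\overline{C_n}\bigr]a$. The only (harmless) difference is that you make explicit the $\mathbb{Z}[C_n]\oplus\mathbb{Z}[C_n]a$ component-matching step that the paper leaves implicit.
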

\begin{proof}Note that
\begin{equation*}
\begin{aligned}
(\overline{X}+\overline{Ya})^2&=\overline{X}\;\overline{X}+\overline{X}\;\overline{Ya}+\overline{Ya}\;\overline{X}+\overline{Ya}\;\overline{Ya}=\overline{X}\;\overline{X}+\overline{Y}\;\overline{Y^{(-1)}}+(\overline{X}\;\overline{Y}+\overline{Y}\;\overline{X^{(-1)}})a\\
&=\overline{X}^2+\overline{Y}\;\overline{Y^{(-1)}}+\overline{Y}(\overline{X}+\overline{X^{(-1)}})a.
\end{aligned}
\end{equation*}
Thus,\;from Lemma \ref{l-DSRGCayleyGraphsGruopRing},\;the Cayley graph $\mathcal{C}(D_n,X\cup Ya)$ is a DSRG with parameters $(2n,|X|+|Y|, \mu, \lambda, t)$ if and only if
\begin{equation*}
\begin{aligned}
&\overline{X}^2+\overline{Y}\;\overline{Y^{(-1)}}+\overline{Y}(\overline{X}+\overline{X^{(-1)}})a=te+\lambda(\overline{X}+\overline{Ya})+\mu(\overline{D_n}-(\overline{X}+\overline{Ya})-e)\\
=&(t-\mu)e+(\lambda-\mu)\overline{X}+\mu\overline{C_n}+((\lambda-\mu)\overline{Y}+\mu\overline{C_n})a\\
\end{aligned}
\end{equation*}
This  is equivalent to conditions $(\ref{6.1})$ and $(\ref{6.2})$.\;
\end{proof}
When $Y=X$,\;we have the following:
\begin{lem}\label{l-specialdihedrantcharacterization}The Cayley graph $\mathcal{C}(D_n,X\cup Xa)$ is a DSRG with parameters $(2n,2|X|, \mu, \lambda, t)$ if and only if $t=\mu$ and
\begin{equation}\label{6.3}
(\overline{X}+\overline{X^{(-1)}})\overline{X}=(\lambda-\mu)\overline{X}+\mu\overline{C_n}.
\end{equation}
\end{lem}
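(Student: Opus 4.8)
The plan is to derive this as the special case $Y=X$ of Lemma \ref{l-generaldihedrantcharacterization}, so that no new computation with adjacency matrices is needed. First I would substitute $Y=X$ into the two group-ring conditions of that lemma. Condition (\ref{6.1}) becomes literally (\ref{6.3}), namely $(\overline{X}+\overline{X^{(-1)}})\overline{X}=(\lambda-\mu)\overline{X}+\mu\overline{C_n}$, while condition (\ref{6.2}) becomes $\overline{X}^2+\overline{X}\,\overline{X^{(-1)}}=(t-\mu)e+(\lambda-\mu)\overline{X}+\mu\overline{C_n}$.

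The key observation is that $C_n$ is abelian, so $\mathbb{Z}[C_n]$ is a commutative ring; in particular $\overline{X}\,\overline{X^{(-1)}}=\overline{X^{(-1)}}\,\overline{X}$, whence the left-hand side of the specialized (\ref{6.2}) equals $\overline{X}^2+\overline{X^{(-1)}}\,\overline{X}=(\overline{X}+\overline{X^{(-1)}})\overline{X}$, which is exactly the left-hand side of (\ref{6.3}). Thus the two specialized conditions share an identical left-hand side and differ only in the term $(t-\mu)e$ on the right.

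For the forward direction, if $\mathcal{C}(D_n,X\cup Xa)$ is a DSRG with parameters $(2n,2|X|,\mu,\lambda,t)$, then Lemma \ref{l-generaldihedrantcharacterization} (with $Y=X$) supplies both specialized identities. Subtracting the right-hand side of the (\ref{6.3})-version from that of the (\ref{6.2})-version, and using that their left-hand sides coincide, yields $(t-\mu)e=0$ in the group ring; reading off the coefficient of the basis element $e$ forces $t=\mu$, and the identity that remains is precisely (\ref{6.3}). Conversely, if $t=\mu$ and (\ref{6.3}) holds, then the specialized (\ref{6.1}) is (\ref{6.3}) itself, and the specialized (\ref{6.2}) is (\ref{6.3}) augmented by $(t-\mu)e=0$, so both conditions of Lemma \ref{l-generaldihedrantcharacterization} hold and that lemma returns that the graph is a DSRG with the stated parameters.

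There is no substantive obstacle here; the argument is essentially bookkeeping once Lemma \ref{l-generaldihedrantcharacterization} is in hand. The only two points requiring care, which I would state explicitly to keep the proof rigorous, are the appeal to commutativity of $\mathbb{Z}[C_n]$ that identifies the two left-hand sides, and the coefficient-extraction step that converts the group-ring identity $(t-\mu)e=0$ into the numerical equality $t=\mu$.
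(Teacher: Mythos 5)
Your proposal is correct and matches the paper's intent exactly: the paper states this lemma immediately after Lemma \ref{l-generaldihedrantcharacterization} with the phrase ``When $Y=X$,'' leaving precisely your substitution argument implicit, namely that commutativity of $\mathbb{Z}[C_n]$ identifies the left-hand sides of the two specialized conditions so that they differ only by $(t-\mu)e$, forcing $t=\mu$. Your explicit treatment of the coefficient-extraction step is sound and no gap remains.
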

\begin{rmk}(\ref{6.3}) also implies that
\begin{equation*}
(\overline{X}+\overline{X^{(-1)}})\overline{X^{(-1)}}=(\lambda-\mu)\overline{X^{(-1)}}+\mu\overline{C_n}.
\end{equation*}
Therefore,\;the sum of the above equation and (\ref{6.3}) gives that
\begin{equation}\label{6.4}
(\overline{X}+\overline{X^{(-1)}})^2=(\lambda-\mu)(\overline{X}+\overline{X^{(-1)}})+2\mu\overline{C_n}.
\end{equation}

\end{rmk}
\subsection{Some constructions of directed strongly regular Cayley graphs on dihedral groups}
In the following constructions,\;$v$ is a positive divisor of $n$ and $l=\frac{n}{v}$.\;
\begin{con}\label{c-6.4}
Let $v$ be an odd positive divisor of $n$.\;Let $T$ be a subset of $\{x^1,\cdots,x^{v-1}\}$,\\and $X$ be a subset of ${C}_n$ satisfy the following conditions:\\
$(i)$\;$X=T\langle x^v\rangle$.\\
$(ii)$\;$X\cup X^{(-1)}=C_{n}\setminus \langle x^v\rangle$.\\
Then the Cayley graph $\mathcal{C}(D_n,X\cup Xa)$ is a DSRG with parameters $\left(2n,n-l,\frac{n-l}{2},\frac{n-l}{2}-l,\frac{n-l}{2}\right)$.\;
\end{con}
\begin{proof}Note that $\overline{X}(\overline{X}+\overline{X^{(-1)}})=-l\overline{X}+\frac{n-l}{2}\overline{C_n}$.\;The result follows from Lemma \ref{l-specialdihedrantcharacterization} directly.\;
\end{proof}
\begin{rmk}
The directed strongly regular Cayley graphs $\mathcal{C}(D_n,X\cup aX)$ constructed above satisfy $X\cap X^{(-1)}=\emptyset$.
\end{rmk}

\begin{con}\label{c-6.6}Let $v>2$ be an even positive divisor of $n$.\;Let $T$ be a subset of $\{x^1,\cdots,x^{v-1}\}$,\\and $X$ be a subset of ${C}_n$ satisfy the following conditions:\\
$(i)$\;$X=T\langle x^v\rangle$.\\
$(ii)$\;$X\cup X^{(-1)}=({C}_{n}\setminus \langle x^v\rangle)\uplus(x^{\frac{v}{2}}\langle x^v\rangle)$.\\
$(iii)$\;$X\cup (x^{\frac{v}{2}}X)={C}_{n}$.\\
Then the Cayley graph $\mathcal{C}(D_n,X\cup Xa)$ is a DSRG with parameters $\left(2n,n,\frac{n}{2}+l,\frac{n}{2}-l,\frac{n}{2}+l\right)$.\;
\end{con}

\begin{proof}Note that $|X|=\frac{n}{2}$ and $\overline{X}+\overline{X^{(-1)}}=\overline{C_n}-\overline{\langle x^v\rangle}+\overline{x^{\frac{v}{2}}\langle x^v\rangle}$.\;Thus $\overline{X}(\overline{X}+\overline{X^{(-1)}})=-l\overline{X}+\frac{n}{2}\overline{C_n}+\overline{T\langle x^v\rangle}\;\overline{x^{\frac{v}{2}}\langle x^v\rangle}=-l\overline{X}+
\frac{n}{2}\overline{C_n}+l\overline{x^{\frac{v}{2}}X}=-l\overline{X}+\frac{n}{2}\overline{C_n}+l\overline{C_n}-l\overline{X}=(\frac{n}{2}+l)\overline{C_n}-2l\overline{X}$.\;The result follows from Lemma \ref{l-specialdihedrantcharacterization} directly.\;
\end{proof}
\begin{rmk}
The directed strongly regular Cayley graphs $\mathcal{C}(D_n,X\cup aX)$ constructed above satisfy $X\cap X^{(-1)}\neq\emptyset$.
\end{rmk}

\begin{con}\label{c-6.8}Let $v$ be an odd positive divisor of $n$.\;Let $T$ be a subset of $ \{e,x^1,\cdots,x^{v-1}\}$ with $e\in T$,\;and let $X,Y\subseteq{C}_n$ satisfy the following conditions:\\
$(i)$\;$Y=T\langle x^v\rangle=X\cup \langle x^v\rangle$.\\
$(ii)$\;$Y\cup Y^{(-1)}=C_{n}\uplus \langle x^v\rangle$.\\
Then the Cayley graph $\mathcal{C}(D_n,X\cup Ya)$ is a DSRG with parameters $\left(2n,n,\frac{n+l}{2},\frac{n-l}{2},\frac{n+l}{2}\right)$.\;
\end{con}
\begin{proof}We have $|Y|=|X|+l=\frac{n+l}{2}$,\;$\overline{X}+\overline{X^{(-1)}}=\overline{C_n}-\overline{\langle x^v\rangle}$ and $\overline{Y}(\overline{X}+\overline{X^{(-1)}})=-l\overline{Y}+\frac{n+l}{2}\overline{C_n}$.\;Meanwhile,\;$\overline{X}^2+\overline{Y}\;\overline{Y^{(-1)}}=\overline{X}(\overline{X}+\overline{X^{(-1)}})+\overline{\langle x^v\rangle}(\overline{X}+\overline{X^{(-1)}})+l\overline{\langle x^v\rangle}=-l\overline{X}+\frac{n+l}{2}\overline{C_n}.$\;
The result follows from Lemma \ref{l-generaldihedrantcharacterization} directly.\;
\end{proof}

\begin{rmk}
It is known that the automorphism group of dihedral group $D_n$ is
\begin{equation*}
\mathbf{Aut}(D_n)=\{\gamma_{s,s'}|0\leqslant s,s'\leqslant n-1,(s,n)=1\},
\end{equation*}
where $\gamma_{s,s'}$ is defined by $\gamma_{s,s'}(x)=x^s,\gamma_{s,s'}(a)=x^{s'}a$.

Let $\alpha\in \mathbf{Aut}(D_n)$,\;then it is easy to see that $\alpha$ is a graph isomorphism from $\mathcal{C}(D_n,X\cup Ya)$ to $\mathcal{C}(D_n,\alpha(X)\cup \alpha(Ya))$.\;Furthermore,\;this asserts that the Cayley graphs $\mathcal{C}(D_n,X\cup Ya)$ and $\mathcal{C}(D_n,X^{(s)}\cup Y^{(s)}x^{s'}a)$ are isomorphic for any $0\leqslant s,s'\leqslant n-1$ and $(s,n)=1$.\;
\end{rmk}
\subsection{The characterization of directed strongly regular Cayley graphs $\mathcal{C}(D_n,X\cup Xa)$ with $X\cap X^{(-1)}=\emptyset$}
Let $\zeta_n$ be a fixed primitive $n$-th root of unity,\;then $\mathbf{Irr}({C_n})=\{\chi_j|0\leqslant j\leqslant n-1\}$,\;where $\chi_j$ is defined by $\chi_j(x^i)=\zeta_n^{ij}$ for $0\leqslant i,j\leqslant n-1$.\;The character $\chi_0$ is called the principal character of $C_n$ and the characters $\chi_1,\chi_2,\cdots,\chi_{n-1}$
are called the nonprincipal characters of $C_n$.\;

Let $X$ be an antisymmetric subset of $C_n$,\;i.e.,\;$X\cap X^{(-1)}=\emptyset$.\;We now give the characterization of the directed strongly regular Cayley graphs $\mathcal{C}(D_n,X\cup Ya)$ with $X\cap X^{(-1)}=\emptyset$.\;
\begin{thm}A Cayley graph $\mathcal{C}(D_n,X\cup Xa)$ with $X\cap X^{(-1)}=\emptyset$ is a DSRG with parameters $\left(2n,2|X|,\mu,\lambda,t\right)$ if and only if there exists a subset $T$ of $\{x^1,\cdots,x^{v-1}\}$ satisfies the following conditions:\\
$(i)$\;$X=T\langle x^v\rangle$;\\
$(ii)$\;$X\cup X^{(-1)}=C_{n}\setminus \langle x^v\rangle$,
where $v=\frac{n}{\mu-\lambda}$.\;
\end{thm}
\begin{proof}Let $U=X\cup X^{(-1)}$,\;then $U$ is a subset of $C_n$ and $\Delta_U(g)\in\{0,1\}$ for any $g\in U$.\;Hence $\overline{U}=\overline{X}+\overline{X^{(-1)}}$.\;It follows from Construction \ref{c-6.4} that a Cayley graph  $\mathcal{C}(D_n,X\cup Xa)$ which satisfies the  conditions  $(i)$ and $(ii)$ is a DSRG with $X\cap X^{(-1)}=\emptyset$.\;

Conversely,\;suppose that $\mathcal{C}(D_n,X\cup Xa)$ is a DSRG with parameters $(2n,2|X|,\mu,\lambda,t)$.\;It follows from
Lemma \ref{l-specialdihedrantcharacterization} and  $(\ref{6.4})$ that
\begin{equation}\label{6.5}
\overline{U}^2=(\lambda-\mu)\overline{U}+2\mu\overline{C_n}.
\end{equation}
Therefore,\;$\chi(\overline{U})\in\{0,\lambda-\mu\}$ for any nonprincipal characters $\chi\in \mathbf{Irr}(C_n)$.\;Define the set
$$\mathcal{U}=\{j:1\leqslant j\leqslant n-1,\;\chi_j(\overline{U})=\lambda-\mu\}.\;$$
Then from the inversion formula (\ref{l-Inversionformula}),\;we have
\begin{equation}\label{6.6}
\Delta_U(g)=\frac{1}{n}\sum_{\chi\in\mathbf{Irr}(C_n)}\chi(\overline{U})\overline{\chi(g)}=\frac{\lambda-\mu}{n}\sum_{j\in \mathcal{U}}\overline{\chi_j(g)}+\frac{2|X|}{n}.
\end{equation}
Note that $e\not\in U$,\;hence $\Delta_U(e)=0$ and then
\[\Delta_U(e)=\frac{\lambda-\mu}{n}|\mathcal{U}|+\frac{2|X|}{n}=0.\]
This gives that that $2|X|=(\mu-\lambda)|\mathcal{U}|$.\;Then (\ref{6.6}) becomes
\begin{equation}\label{6.7}
\Delta_U(g)=\frac{\mu-\lambda}{n}\left(|\mathcal{U}|-\sum_{j\in \mathcal{U}}\overline{\chi_j(g)}\right).
\end{equation}

We assert that $\frac{n}{\mu-\lambda}$ is an integer.\;Indeed,\;selecting some $g\in U$,\;the above equation shows that $|\mathcal{U}|-\sum_{j\in \mathcal{U}}\overline{\chi_j(g)}=\frac{n}{\mu-\lambda}\in \mathbb{Q}$.\;Note that $|\mathcal{U}|-\sum_{j\in \mathcal{U}}\overline{\chi_j(g)}$ also lies in the ring $\mathbb{Z}[\zeta_n]$,\;which is the ring of integers of the $n$-th {cyclotomic field} $\mathbb{Q}(\zeta_n)$.\;Therefore
\[\frac{n}{\mu-\lambda}\in \mathbb{Q}\cap\mathbb{Z}[\zeta_n]=\mathbb{Z}.\]
The equation (\ref{6.7}) also implies that
\begin{equation*}
\begin{aligned}
\Delta_U(g)=0 &\Leftrightarrow |\mathcal{U}|-\sum_{j\in \mathcal{U}}\overline{\chi_j(g)}=0\Leftrightarrow \chi_j(g)=1 \text{\;for\;} j\in\mathcal{U}\Leftrightarrow g\in\bigcap_{j\in\mathcal{U}}\mathcal{K}_{\chi_j}\overset{\text{def}}=R,\\
\end{aligned}
\end{equation*}
where $R$ is some subgroup of $C_n$.\;This shows that $\overline{U}=\overline{C_n}-\overline{R}$.\;Note that $$\overline{U}^2=(\overline{C_n}-\overline{R})^2=(n-2|R|)\overline{C_n}+|R|\overline{R}=(n-|R|)\overline{C_n}-|R|\overline{U},\;$$
so (\ref{6.5}) gives that $|R|=\mu-\lambda$,\;$n-|R|=2\mu$ and $|X|=\frac{n-|R|}{2}=\mu$.\;Then $R=\langle x^{\frac{n}{\mu-\lambda}}\rangle=\langle x^{v}\rangle$,\;proving $(ii)$.\;In this case,\;from Lemma \ref{l-specialdihedrantcharacterization},\;$(\ref{6.3})$ becomes
$$(\lambda-\mu)\overline{X}+\mu\overline{C_n}=\overline{X}\;\overline{U}=\overline{X}(\overline{C_n}-\overline{\langle x^{v}\rangle})=\mu\overline{C_n}-\overline{X}\;\overline{\langle x^{v}\rangle},$$
\;i.e.,\;$(\mu-\lambda)\overline{X}=\overline{X}\;\overline{\langle x^{v}\rangle}$.\;This asserts that $X$ is a union of some cosets of $\langle x^{v}\rangle$ in $C_n$,\;therefore $X=T\langle x^v\rangle$ for some subset $T$ of $\{x^1,\cdots,x^{v-1}\}$,\;proving $(i)$.\;The result follows.\;
\end{proof}

\section*{Acknowledgements}
The authors would like to express their grateful thankfulness to the  referees for their careful reading of the manuscript of the presented paper and valuable comments.
\section*{References}

\bibliographystyle{plain}

\bibliography{1}

\begin{thebibliography}{10}

\bibitem{JL}
Jonathan~L Alperin and Rowen~B Bell.
\newblock {\em Groups and representations}, volume 162.
\newblock Springer Science \& Business Media, 2012.

\bibitem{ST}
Steinberg B.
\newblock {\em Representation Theory of Finite Groups: An Introductory
  Approach}, volume~1.
\newblock Springer-Verlag, 2012.

\bibitem{L}
L$\mathrm{\acute{a}}$szl$\mathrm{\acute{o}}$ Babai.
\newblock Spectra of cayley graphs.
\newblock {\em Journal of Combinatorial Theory, Series B}, 27(2):180--189,
  1979.

\bibitem{A}
Art~M Duval.
\newblock A directed graph version of strongly regular graphs.
\newblock {\em Journal of Combinatorial Theory, Series A}, 47(1):71--100, 1988.

\bibitem{D}
Art~M Duval and Dmitri Iourinski.
\newblock Semidirect product constructions of directed strongly regular graphs.
\newblock {\em Journal of Combinatorial Theory, Series A}, 104(1):157--167,
  2003.

\bibitem{RA}
Roger~A Horn and Charles~R Johnson.
\newblock {\em Matrix analysis}.
\newblock Cambridge university press, 2012.

\bibitem{IS}
I~Martin Isaacs.
\newblock {\em Finite group theory}, volume~92.
\newblock American Mathematical Soc., 2008.

\bibitem{J1}
Leif~K J{\o}rgensen.
\newblock Directed strongly regular graphs with $\mu=\lambda$.
\newblock {\em Discrete Mathematics}, 231(1):289--293, 2001.

\bibitem{K1}
Mikhail Klin, Akihiro Munemasa, Mikhail Muzychuk, and Paul~Hermann Zieschang.
\newblock Directed strongly regular graphs obtained from coherent algebras.
\newblock {\em Linear Algebra and its Applications}, 377(1):83--109, 2004.

\bibitem{JP}
Jean-Pierre Serre.
\newblock {\em Linear representations of finite groups}, volume~42.
\newblock Springer Science \& Business Media, 2012.

\end{thebibliography}
\end{document}